\theoremstyle{plain}
\newtheorem{theorem}{Theorem}
\newtheorem{proposition}[theorem]{Proposition}
\newtheorem{lemma}[theorem]{Lemma}
\newtheorem{corollary}[theorem]{Corollary}
\theoremstyle{definition}
\theoremstyle{remark}
\newtheorem{remark}[theorem]{Remark}
\title{A generalized model interpolating  between the random energy model and the branching random walk}
\author{Mohamed Ali Belloum\thanks{\texttt{belloum@math.univ-paris13.fr}}}
\date{ }
\begin{document}
	\maketitle
	\begin{abstract}
		We study a generalization of the model introduced in \cite{zbMATH06473046} that interpolates between the random energy model (REM) and the branching random walk (BRW). More precisely, we  are interested in the asymptotic behaviour of the extremal process associated to this model. In \cite{zbMATH06473046}, Kistler and Schmidt show that the extremal process of the $GREM(N^{\alpha})$, $\alpha\in{[0,1)}$ converges weakly to a simple Poisson point process. This contrasts with the extremal process of the branching random walk $(\alpha=1)$ which was shown to converge toward a \textit{decorated} Poisson point process by Madaule \cite{zbMATH06705452}. In this paper we propose a generalized model of the $GREM(N^{\alpha})$, that has the structure of a tree with $k_n$ levels, where $(k_n\leq n)$ is a non-decreasing sequence of positive integers. We study a generalized case, where the position of the particles are not necessarily Gaussian variables and the reproduction law is not necessarily binary. We show that as long as $b_n=\lfloor{\frac{n}{k_n}}\rfloor\to_{n\to \infty}\infty$  %as slowly as we want%
		  in the Gaussian case (with the assumption  $\frac{b_n}{\log(n)^2}\to\infty$ as $n\to \infty$ in the non Gaussian case) the decoration disappears and we have convergence to a simple Poisson point process. 
		\vspace{0.5cm}
		
\noindent\textbf{Keywords}: Extremal processes, Branching random walk, extremes of log-correlated random fields.

	\noindent\textbf{MSC 2020}: Primary: 60G80, 60G70, 60G55. Secondary: 60G50, 60G15, 60F05.
	\end{abstract}
	 	\section{Introduction}
 	The random energy model (REM) was introduced by Derrida in $1981$ \cite{zbMATH06494911} for the study of spin glasses. In the REM, there are $2^N$ spin configurations. Each configuration $\sigma \in{\left\{-1,1\right\}}^{N}$corresponds  to an independent centred  Gaussian random variable $X_{\sigma}$ with variance $N$, that models its energy level. It is well-known  that the extremal process of the REM, which is defined as  \begin{equation}
  \label{eqn:cvgRem}
  \mathcal{E}_N = \sum_{\sigma \in \{-1,1\}^N} \delta_{X_\sigma - m_N},
\quad \text{where } m_N = \beta_c N - \frac{1}{2\beta_c} \log(N) \text{
and } \beta_c = \sqrt{2\log(2)},
\end{equation}
	converges weakly in distribution to a Poisson point process with intensity $\frac{1}{\sqrt{2\pi}} e^{-\beta_c x}dx$. Additionally the law of the  maximum $M_N=\max_{\sigma \in{\left\{-1,1\right\}}^{N}}X_{\sigma}$ centred by $m_{N}$ converges weakly to a Gumbel random variable. 
	
	Derrida introduced a generalized model in $1985$, called the GREM \cite{derrida1985generalization}, that has the structure of a tree with $K$ levels ($K$ is a fixed constant in $\mathbb{N}^*$) and can be described as follows. Start by a unique individual (the root). It gives birth to $2^{\frac{N}{K}}$ (we assume that  $\frac{N}{K}$ is a positive integer) children at the first level.  At each level $i$, $1\leq i<K$, each child gives birth independently to $2^{\frac{N}{K}}$ children. We associate  each branch of this tree to an independent centred Gaussian random variable with variance $\frac{N}{K}$. 
	In the context of spin glasses, we obtain $2^N$ configurations in the level $K$, and the 
	level energy of each configuration is the sum of the values along the branches that forms the path from the root of the tree to the leaf corresponding to this configuration. We call this model $GREM_N(K)$. The REM in this case can be thought of as a 
	GREM with one level, i.e. a $GREM_N(1)$. The 
		correlation of the energy of two different configurations depends on the number of common branches shared by their paths from the root up to the node at which they split. These correlations   
	do not have any impact on the extreme values of the energy
	levels, as the result described in \eqref{eqn:cvgRem} still holds even
if $(X_\sigma, \sigma \in \{-1,1\}^N)$ is distributed as a $GREM_N(K)$,
as $N \to \infty$.

	 Kistler and Schmidt \cite{zbMATH06473046} studied the asymptotic of the extremal process of a GREM with a number of levels $K_N=N^{\alpha}$, for $\alpha \in{[0,1)}$. They proved that, setting
	$$m^{(\alpha)}_N=\beta_cN-\frac{2\alpha+1}{2\beta_c}\log(N),$$  the extremal process of the $GREM_N(N^{\alpha})$  converges weakly to a Poisson point process with intensity $\frac{1}{\sqrt{2\pi}}e^{-\beta_cx}dx$, and the law of the maximum converges to a Gumbel distribution. In the $GREM_N(N^{\alpha})$ the stronger
	correlations between the leaves of the tree have the effect of
	decreasing the median of the maximal energy level, specifically its
	logarithmic correction. However the limiting law of the extremal process
	remains unchanged.  
	  In the case of $\alpha=1$, which corresponds to the classical binary branching random walk, the asymptotic behaviour of the extremal process is well-known. The convergence in law of the recentred maximum was proved by Aidékon \cite{zbMATH06216112}, and recently Madaule \cite{zbMATH06705452} showed the convergence of the extremal process to a decorated Poisson point process with random intensity. Therefore a phase transition can be
	  exhibited, from a simple Poisson point process appearing in the
	  $GREM_N(N^\alpha)$ for $\alpha < 1$ to a decorated one for $\alpha = 1$.
	
	The aim of this article is to have a
	closer look at this phase transition.
	We take interest in a
	generalized version  
	of the $GREM_N(N^{\alpha})$, that has the structure of a tree with $k_n$ levels, where $(k_n\geq 0)$ is a non-decreasing sequence of positive integers. We study the asymptotic behaviour of the extremal point process showing that as long as $\frac{k_n}{n}\to_{n\to\infty}0$ (in the Gaussian case), the decoration does not appear.
	\section{Notation and main result}
	A branching random walk on $\mathbb{R}$ is a particle system that evolves as follows. It starts with a unique
	individual located at the origin at time 0. At each time $n\geq 1$, each individual alive in the process dies and gives birth to a random number of children, that are positioned around their parent according to i.i.d random variables.
	
	The process we take interest in  can be described as follows. Let $k_n$ be an integer sequence growing to $\infty$ such that $k_n\leq n$ for all $n\in{\mathbb{N}}$ and set $b_n=\lfloor{\frac{n}{k_n}}\rfloor$ the integer part of $\frac{n}{k_n}$. The process starts with a unique individual located at the origin at time 0.  
	The particles reproduce for $b_n$ consecutive steps, each particle giving birth to an i.i.d. number of children. Then each descendant of the initial ancestors moves independently, making $b_n$ i.i.d. steps of displacements. This forms the first generation of the process. For each $1 \leq
	k \leq k_n$, every individual at generation $k$ repeats independently of
	the others the same reproduction and displacement procedure as the
	original ancestor. In other words every individual creates a number of
	descendants given by the value at time $b_n$ of a Galton-Watson process,
	whose positions are given by i.i.d. random variables with the same law
	as a random walk of length $b_n$.
	
	 To describe the model formally we introduce Ulam-Harris notation for trees.
	  Set $$ \mathcal {U} = \bigcup_ {n \geq0} \mathbb {N} ^ {n} $$ with $\mathbb{N}^{0} = \{\varnothing \} $ by convention. The element $ (u_1, u_2 .., u_n) $ represents the $ u_n ^ {\mathrm{th}} $ child of $ u_ {n-1} ^ {\mathrm{th}} $ child .., of $ u_1 ^ {} $ of the root particle which is  noted $ \varnothing $. If $u=(u_1, u_2 .., u_n)$ we denote by $u_{k}=(u_1, u_2 .., u_k)$ the sequence consisting of the $k^{\mathrm{th}}$ first values of $u$ and by $|u|$ the generation of $u$. For $u,v$ $\in{\mathcal{U}}$ we denote by $\pi(u)$ the parent of $u$. If $u=(u_1, u_2 .., u_n)$ and $v=(v_1, v_2 .., v_n)$, then we write $u.v=(u_1, u_2 .., u_n,v_1, v_2 .., v_n)$ for the concatenation of $u$ and $v$. We write $$|u\land v|:=\inf\{j\leq n: u_{j}=v_{j} \text{ and } u_{j+1}\ne v_{j+1} \}.$$ This quantity  is called the overlap of $u$ and $v$ in the context of spin glasses.
	A tree $\mathcal{T}$ is a subset of $\mathcal{U}$ satisfying the following assumptions:
	\begin{itemize}
		\item $\varnothing$ $\in{\mathcal{T}}$.
		\item if $u\in{\mathcal{T}}$, then $\pi(u) \in{\mathcal{T}}$.
		\item if $u=(u_1,u_2,...u_n)\in{\mathcal{T}}$, then $\forall$ $j \leq u_n$, $\pi(u).j\in{\mathcal{T}}$.
	\end{itemize}
	
	We  now introduce the reproduction and displacement laws associated to our process. Let $(Y_n)_{n\in{\mathbb{N}}}$ be a random walk such that $\mathbb{E}(Y_1)=0$ and $\mathrm{Var}(Y_1)=1$. We denote by $(Z_n)_{n\in{\mathbb{N}}}$ a Galton-Watson process such that $Z_0=1$  and offspring law given by the weights $(p(k))_{k\in{\mathbb{N}}}$ with $p_{0}=0$. Under this assumption, the Galton Watson process survives almost surely. Set $m=\sum_{k\geq 1}kp(k)$ the mean of the offspring distribution and assume that $m>1$. Recall that the Galton-Watson process $(Z_n)_{\in{\mathbb{N}}}$ 	satisfies for all $n\in{\mathbb{N}}$:
	$$Z_{n+1}=\sum_{j=1}^{Z_n} \xi_{n+1,j},$$ 
	where $(\xi_{n,j})_{1\leq j\leq Z_n }$ are i.i.d random variables with law $(p(k))_{k\in{\mathbb{N}}}$  .
	
	Under the assumption $\mathbb{E}(Z_1\log(Z_1))<\infty$,  Kesten and Stigum \cite{zbMATH03322553} proved that on the set of non extinction of $\mathcal{T}$ there exists a positive random variable $Z_{\infty}$ such that \begin{align}
	\label{500}
	  &\lim_{b\rightarrow \infty }\frac{Z_{b}}{m^{b}}=Z_{\infty}>0, \hspace{0.25cm} \text{a.s}.
	  \end{align}
	In this article we assume that the following stronger condition holds:
	\begin{align}
	\label{F}
 &\mathbb{E}(Z_1^2)<\infty,
	\end{align}
	 which is needed for the proof of Lemma \ref{11}.
	 
	Construct a tree that we denote $\mathcal{T}^{(n)}$ as follows.  Start by the ancestor $\varnothing$ located at the origin. It gives birth to $Z_{b_n}$ children.  For each $k\leq k_n$, each individual at the generation $k$ gives birth to an independent copy of  $Z_{b_n}$, that are positioned according to i.i.d random variables with the same law as $Y_{b_n}$.     
	For $1\leq k\leq k_n$, let $$\mathcal{H}_{k}:=\{u\in{\mathcal{T}^{(n)}}: |u|=k\},$$ the set of particles in the $k^{th}$ generation. By construction, we have $\#\mathcal{H}_{k}=Z_{kb_n}$ in law for all $k\leq k_n$.
	We define $(X^{(n)}_u, u \in \mathcal{T}^{(n)})$ a family of i.i.d.
	random variables with same law as $Y_{b_n}$. For $u \in
	\mathcal{T}^{(n)}$, we write
	\[
	S_u^{(n)} = \sum_{k=1}^{|u|} X^{(n)}_{u_k}.\]
	 The goal of this paper is to study  the asymptotic behaviour of the extremal process associated to this model $$\mathcal{E}^{(b_n)}_n=\sum_{u\in{\mathcal{H}_{k_n}}}\delta_{S^{(n)}_u-m_n},$$
	 where  the value of the median $m_n$ is given in Theorem $\ref{Theorem}$.

	 Let us introduce notation associated to the displacement of the process.  
	 For all $\theta>0$ we set \begin{align}
\label{H} \Lambda(\theta):=\log\left(\mathbb{E}\left(\exp(\theta Y_1)\right)\right).
\end{align}
We assume that there exists $\theta>0$ such that $\Lambda(\theta)<\infty$. We write:
 $$\kappa_n{(\theta)}=\log\mathbb{E}\left(\sum_{|u|=1}e^{\theta X^{(n)}_{u}}\right).$$ Observe that $\kappa_n(\theta)=b_n(\log(m)+\Lambda(\theta))$ as $$\mathbb{E}\left(\sum_{|u|=1}e^{\theta X^{(n)}_{u}}\right)=\mathbb{E}\left(\sum_{|u|=1}\mathbb{E}(e^{\theta X^{(n)}_{u}}|Z_{b_n})\right)=\mathbb{E}\left(Z_{b_n}\mathbb{E}(e^{\theta Y_{b_n}})\right)=e^{b_n(\log(m)+\Lambda(\theta))}.$$
 The function $\kappa_n$ is convex and differentiable on $\left\{\theta>0, \kappa_n(\theta)<\infty\right\}$, its interval of definition.
 We assume that there exists $\theta^{*}>0$ such that
\begin{align}
\label{K} &\theta^{*}\Lambda'{(\theta^{*})}-\Lambda(\theta^{*})=\log(m).
\end{align}
 We also assume that there  exists $\delta>0$ such that
\begin{align}
\label{equation 6}
&\mathbb{E}\left(\exp((\theta^{*}+\delta) Y_1)\right)< \infty
\end{align}
 Recall that the case $k_n=n$ corresponds to the classical branching random walk.
 Then under assumption \eqref{H} and \eqref{K}, Kingman \cite{zbMATH03509558}, Hammersley \cite{zbMATH03474678} and Biggins \cite{10.2307/1426138} showed that on the set of non-extinction of $\mathcal{T} $$$\lim_{n\to\infty}\frac{M_n}{n}:=\frac{\kappa{(\theta^*)}}{\theta^*}=v \hspace{0.25 cm} \text{a.s,}$$ where, $M_n=\max_{u\in{\mathcal{H}_{n}}}S^{}_u$ and $v$ is the speed of the right-most individual. Then, Hu and Shi  \cite{zbMATH05558299} and Addario-Berry and Reed \cite{zbMATH05587823} proved that 
 $$M_n=nv-\frac{3}{2\theta^{*}}\ln(n)+O_{\mathbb{P}}(1),$$ where $O_{\mathbb{P}}(1)$ represents a tight sequence of random variables.  	

	Throughout this paper we will assume that we are in one of the two cases: 
	
	$(\mathbf{H_1})$: $Y_1$ is a standard Gaussian variable and $b_n\to \infty$ as $n\to \infty$.
	
	$(\mathbf{H_2})$: The characteristic function $\phi(\lambda)=\mathbb{E}\left(\exp(i\lambda Y_1)\right)$ of $Y_1$ satisfies the Cramér condition, i.e $$\limsup_{|\lambda|\to \infty}|\phi(\lambda)|<1,$$  
	and $\frac{b_n}{\log(n)^2}\to\infty$ as $n\to \infty.$ 
	The last assumption on $Y_1$ (under $\mathbf{(H_2)}$) comes from the fact that we used a refined version of the Stone's local limit theorem introduced in \cite[Theorem 2.1]{borovkov2017generalization}, more precisely in Corollary \ref{stonebis}.
	
 Our work is inspired by the recent works on the convergence of the extremal processes \cite{zbMATH06083948}, \cite{zbMATH06247828}, \cite{zbMATH06473046} and \cite{zbMATH06705452}. 
 The main result of this paper is the following convergence in distribution.
	\begin{theorem}
		\label{Theorem}
		Assume that \eqref{F}, \eqref{H}, \eqref{K}, \eqref{equation 6} and either $\mathbf{(H_1)}$ or $\mathbf{(H_2)}$ hold, then setting $$m_n=k_nb_nv-\frac{3}{2\theta^{*}}\log(n)+\frac{\log(b_n)}{\theta^{*}},$$  the extremal process $$\mathcal{E}^{(b_n)}_n=\sum_{u\in{\mathcal{H}_{k_n}}}\delta_{S^{(n)}_u-m_n}$$  converges in law to a Poisson point process with intensity $\frac{1}{\sqrt{2\pi\sigma^2}}Z_{\infty}e^{-\theta^{*}x}$, where $\sigma^2=\kappa^{''}_n(\theta^*)$ and $Z_{\infty}$ is the random variable defined in equation \eqref{500}.  Moreover, the law of the recentered maximum converges weakly to a Gumbel distribution randomly shifted by $\frac{1}{\theta^*}\log(Z_{\infty})$. 
	\end{theorem}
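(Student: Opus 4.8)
The plan is to establish convergence of the extremal process $\mathcal{E}^{(b_n)}_n$ by the standard two-step strategy used for log-correlated fields: first show that the first-moment (intensity) computation yields the claimed Poisson intensity, then show that the point process has no clustering (i.e.\ no decoration), the latter being exactly where the hypothesis $b_n \to \infty$ (resp.\ $b_n/\log(n)^2\to\infty$) does its work. Concretely, I would work with the Laplace functional: for a nonnegative continuous compactly supported $\varphi$, I want to prove
\begin{align}
\mathbb{E}\left(\exp\left(-\langle \mathcal{E}^{(b_n)}_n,\varphi\rangle\right)\right) \longrightarrow \mathbb{E}\left(\exp\left(-Z_\infty \int_{\mathbb{R}} \left(1-e^{-\varphi(x)}\right)\tfrac{1}{\sqrt{2\pi\sigma^2}}e^{-\theta^* x}\,dx\right)\right).
\end{align}
The tree $\mathcal{T}^{(n)}$ has a two-scale structure: a ``coarse'' branching random walk with $k_n$ levels whose increments are themselves (displacement of) a length-$b_n$ branching random walk. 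The role of $Z_\infty$ is that $\#\mathcal{H}_{k_n} = Z_{k_n b_n}$, and by Kesten--Stigum \eqref{500} together with the branching structure one expects the empirical measure of the $k_n$-th generation of the coarse walk, suitably rescaled, to be governed by the additive martingale limit; the limiting $Z_\infty$ in the statement should be identified with the limit $W_\infty$ of the derivative/additive martingale $\sum_{|u|=k_n} e^{\theta^*(S_u^{(n)} - \kappa_n(\theta^*)|u|/\ldots)}$ along the appropriate normalization, which under \eqref{F} converges in $L^2$.

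**First I would** reduce to a first/second moment computation on the \emph{top} scale. Writing each particle $u\in\mathcal{H}_{k_n}$ via its ancestral line, $S_u^{(n)} - m_n$ is a sum of $k_n$ i.i.d.\ blocks, each distributed as the displacement $Y_{b_n}$ of a walk of length $b_n$ with the Galton--Watson reproduction; the point is that, after recentering by $m_n = k_n b_n v - \frac{3}{2\theta^*}\log n + \frac{\log b_n}{\theta^*}$, the contributing particles are those whose path stays below the ``BRW barrier'' $t\mapsto tv$ with the usual logarithmic correction. I would invoke the classical many-to-one lemma to rewrite first moments of $\langle \mathcal{E}^{(b_n)}_n, \varphi\rangle$ as $\mathbb{E}\big(Z_{k_n b_n}\big)$ times a tilted-random-walk probability, then apply the refined local limit theorem — Stone's theorem in case $\mathbf{(H_1)}$ (exact, since Gaussian increments of the length-$b_n$ block are again Gaussian with variance $b_n\sigma^2$) and Corollary \ref{stonebis} (the Borovkov--Borovkov refinement) in case $\mathbf{(H_2)}$ — to the length-$b_n$ increment. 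This is exactly why one needs $b_n\to\infty$: the local CLT for the single increment $Y_{b_n}$ must be accurate at the scale of the lattice/continuum spacing, and in the non-Gaussian (lattice-ish) case one pays an extra $(\log n)^2$ to control the error term uniformly over the $\sim e^{c n}$ many particles. Carrying this out should produce the intensity $\frac{1}{\sqrt{2\pi\sigma^2}} Z_\infty e^{-\theta^* x}\,dx$, with the $\log(b_n)/\theta^*$ shift in $m_n$ precisely compensating the $\sqrt{b_n}$ normalization coming from the local CLT of the block.

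**Next I would** prove the truncated second moment / Poisson-clustering step. One shows that for the truncated process (particles whose whole path stays below the barrier, which by a ballot-type estimate carries all the mass asymptotically) the factorial moments factorize, equivalently that two distinct particles $u,v\in\mathcal{H}_{k_n}$ both landing near level $m_n + x$ must split near the root, $|u\wedge v| = o(k_n)$, so their contributions decorrelate. The key quantitative input is that a single displacement block has length $b_n\to\infty$, so two particles sharing even one common block are already ``spread out'' by $\sqrt{b_n}\to\infty$ and cannot both be extremal unless they split immediately; this is the mechanism by which the decoration present in the genuine BRW ($k_n = n$, $b_n = 1$) disappears. Combined with the convergence of the relevant additive martingale to $Z_\infty$ (using \eqref{F} for the $L^2$ bound, which is what Lemma \ref{11} is for), the Laplace functional converges to the stated mixed-Poisson form, and the maximum statement follows by taking $\varphi$ to be (an approximation of) $t\mathbf{1}_{(-\infty,a]}$ and letting $t\to\infty$, yielding $\mathbb{P}(M^{(n)} - m_n \le a) \to \mathbb{E}\big(\exp(-C Z_\infty e^{-\theta^* a})\big)$, i.e.\ a Gumbel law shifted by $\frac{1}{\theta^*}\log Z_\infty$.

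**The hard part will be** the uniform control of the local-limit-theorem error across the exponentially many particles in case $\mathbf{(H_2)}$, and more fundamentally the identification and convergence of the prefactor $Z_\infty$: one must show that the coarse-scale additive martingale (built from the $k_n$-level tree whose steps are length-$b_n$ blocks) converges to the \emph{same} limit $Z_\infty$ as the Kesten--Stigum limit of the underlying Galton--Watson process, uniformly enough that it can be pulled out of the Laplace functional. I expect this requires a careful coupling argument plus the $L^2$ estimate of Lemma \ref{11}, together with a ballot/barrier estimate (Addario-Berry--Reed or Aïdékon-type) to show the barrier-truncation is asymptotically lossless; the second-moment step, while technical, is then relatively routine given the block structure.
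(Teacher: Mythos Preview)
Your overall strategy---Laplace functional, barrier truncation via a ballot-type estimate, then decorrelation of extremal pairs---is exactly the paper's approach. Two points, however, deserve correction, and they are linked.

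First, the decorrelation statement you need is not $|u\wedge v|=o(k_n)$ but the much stronger $|u\wedge v|=0$: the paper's Lemma~\ref{11} shows that, under the barrier $F_n$, the expected number of pairs of extremal particles with $|u\wedge v|\geq 1$ goes to zero. Your own heuristic (``two particles sharing even one common block are already spread out by $\sqrt{b_n}$'') points to precisely this, so the $o(k_n)$ phrasing is just sloppy---but it matters, because the next step relies on it.

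Second, and this is where you overcomplicate things, the emergence of $Z_\infty$ does \emph{not} go through any coarse-scale additive or derivative martingale of the branching random walk. Once you know extremal particles split at the root of the coarse tree, you apply the Markov property at time~$1$: the Laplace functional becomes
\[
\mathbb{E}\Bigg(\prod_{|w|=1}\big(1+\psi_n(S_w)\big)\Bigg)+o(1),
\]
where $\psi_n(x)$ is a single-subtree functional. The barrier random-walk estimate (Lemma~\ref{Stone barrier} or Corollary~\ref{stonebis}) gives $\psi_n(x)\sim C(\phi)\,L\!\left(\tfrac{-(x-b_nv)}{\sqrt{b_n}}\right)e^{\theta^*(x-b_nv)}$, and averaging over the position $S_w$ (Girsanov plus CLT) collapses this to a \emph{position-independent} factor $\sim -C(\phi)\,m^{-b_n}$, because $\mathbb{E}\big(L(\cdot)\mathbf{1}_{\{\cdot\geq 0\}}\big)=L(0)=1$ by the renewal identity~\eqref{mhhh}. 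Hence the product is simply
\[
\Big(1-C(\phi)\,m^{-b_n}\Big)^{Z_{b_n}}\longrightarrow \exp\big(-C(\phi)\,Z_\infty\big),
\]
by Kesten--Stigum~\eqref{500} directly on the Galton--Watson process. No coupling argument or martingale identification is needed; the ``hard part'' you anticipate is in fact the easiest step once the decorrelation is sharp enough.
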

	\begin{remark}
	\label{remark} Denote by $\mathcal{C}^{l,+}_{b}$ the set of continuous, positive and  bounded functions $\phi:\mathbb{R}\rightarrow \mathbb{R_{+}}$ with support bounded on the left. By \cite[Lemma 4.1]{berestycki2018extremes}, it is enough to show that for all  function $\phi \in{\mathcal{C}^{l,+}_{b}}$ $$\lim_{n\to \infty}\mathbb{E}\left(e^{-\sum_{u\in{\mathcal{H}_{k_n}}} \phi(S_{u}^{(n)}-m_n)}\right)=\mathbb{E}\left(\exp\left(-Z_{\infty}\frac{1}{\sqrt{2\pi\sigma^2}}\int_{}e^{-\theta^{*}y}(1-e^{-\phi(y)})dy\right)\right).$$
	The result of Kistler and Schmidt \cite[Theorem 1.1]{zbMATH06473046} is covered by Theorem $1$. It is the case $\mathbf{(H_1)}$ with $k_n=N^{\alpha}$, $0\leq\alpha<1$ and $Z_{1}= 2$  in our theorem. In that case we have $Z_{\infty}=1$ and $m_n=n\beta_c-\frac{2\alpha+1}{2\beta_c}\log(n)$.
	Throughout this paper, we use $C$ and $c$ to denote  generic positive constants, that may change from line to line. We say that $f_n\sim_{ n \to \infty}g_n$ if $ \lim_{n\to\infty}\frac{f_n}{g_n}=1$. For $x\in{\mathbb{R}}$ we write $x_+=\max(x,0)$.
	\end{remark} 
	 The rest of the paper is organized as follows. In the next section,  we introduce the many to one lemma, and we will give a series of useful random walk estimates.  In Section $4$ we introduce a modified extremal process which we show to have same  asymptotic behaviour of the original extremal process defined in the principal theorem. Finally we will conclude the paper with a proof of the main result.

	\section{Many-to-one formula and random walk estimates}
	In this section, we introduce the many-to-one lemma, that links
additive moments of branching processes to random walk estimates. We
then introduce some estimates for the asymptotic behaviour of random
walks conditioned to stay below a line, and prove their extension to a
generalized random walk where the law of each step is given by the sum of $b_n$ i.i.d random variables.

	\subsection{Many-to-one formula}
	We start by  introducing the celebrated many-to-one lemma that transforms an additive function of a branching random walk into a simple function of random walk. This lemma was introduced by Kahane and Peyrière \cite{zbMATH03544943}. Before we introduce it,  we need to define some change of measure and to introduce some notation. 
	
	Let $W_0:=0$ and $(W_{j}-W_{j-1})_{j\geq 1}$ be a sequence of independent and identically distributed  random variables such that for any measurable function $h:\mathbb{R}\mapsto\mathbb{R}$,
	\begin{align*}
	    \mathbb{E}(h(W_1))=\mathbb{E}\left(e^{\theta^*Y_1-\Lambda(\theta^*)}h(Y_1)\right).
	 \end{align*}
	 where $Y_1$ is the law defined in Section $2$.
	 Respectively, we introduce  $(T^{(n)}_{j}-T^{(n)}_{j-1})_{j\geq 1}$ a sequence of i.i.d random variables such that $T_0=0$ and 
	\begin{align} \label{mh} \mathbb{E}(h(T^{(n)}_1))=\frac{\mathbb{E}\left(\sum_{u,|u|=1}e^{\theta^{*} S^{(n)}_u}h(S^{(n)}_u)\right)}{\mathbb{E}(\sum_{u,|u|=1}e^{\theta^{*} S^{(n)}_u})}=\mathbb{E}\left(e^{\theta^*Y_{b_n}-\Lambda(\theta^*})h(Y_{b_n})\right).
	\end{align} 
	 
	 Observe that 
	 $(T^{(n)}_{k},k\geq1)$
	 is a sequence of random variables that have the same law as the process $(U_{kb_n}=\sum_{j=1}^{kb_n}W_j ,k\geq 1)$.
	 We now set $\bar{T}^{(n)}_{j}=T^{(n)}_{j}-jb_nv$ respectively $\bar{W}_{j}=W_{j}-jv, j\geq 1 $.  
	We have  $$\mathbb{E}(W_1)=\mathbb{E}\left(Y_{1}e^{\theta^*Y_{1}-\Lambda(\theta^{*})}\right)=\Lambda^{'}({\theta^*}),$$ and as $\Lambda^{'}({\theta^*})=\kappa^{'}_{n}(\theta^*)=v$, we have $\mathbb{E}(\bar{W}_{1})=0$ and similarly
	 \begin{align*}
	   &\mathbb{E}\left(W^{2}_1\right)=\mathbb{E}\left(Y^{2}_1 e^{\theta^*Y_{1}-\Lambda(\theta^{*})}\right)=\Lambda^{''}(\theta^*)+(\Lambda^{'}(\theta^{*}))^{2},
	 \end{align*}
	 which gives $\mathrm{Var}(\bar{W}_1)=\Lambda^{''}(\theta^{*})=\sigma^2$ which is finite by assumption \eqref{equation 6}.  As a consequence we have $\mathbb{E}(\bar{T}^{(n)}_1)=0$ and $\mathrm{Var}(\bar{T}^{(n)}_1))=b_n\sigma^2<\infty.$
	 In the case $(\mathbf{H}_1)$, note that $\bar{W}_1$ is a standard Gaussian random variable which means that $\bar{T}^{(n)}_1$ is a centred Gaussian random variable with variance $b_n$.
	 
 For simplicity we write $S_u$ in place of $S^{(n)}_{u}$ and $T_j$ in place of $T^{(n)}_{j}$ in the rest of the article.
		\begin{proposition}\cite[Theorem 1.1]{zbMATH06492274}\label{M} For any $j\geq 1$ and any measurable function $g:\mathbb{R}^{j}\to\mathbb{R_+}$, we have $$\mathbb{E}\left(\sum_{|u|=j}g((S_{u_{i}})_{1\leq i\leq j})\right)=\mathbb{E}\left(e^{-\theta^{*}\bar{T_i}}g((\bar{T_i}+ib_nv)_{1\leq i\leq j})\right).$$  
			\end{proposition}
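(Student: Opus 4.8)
The plan is to obtain this as an instance of the general many-to-one formula of \cite[Theorem 1.1]{zbMATH06492274}, applied to the branching random walk whose one-step reproduction law is that of $(S_u,\,|u|=1)$---that is, $Z_{b_n}$ children placed at i.i.d.\ positions distributed as $Y_{b_n}$---tilted at the parameter $\theta^*$. Concretely, I would first record the general identity in the form
\[
\mathbb{E}\Bigl(\sum_{|u|=j} g\bigl((S_{u_i})_{1\le i\le j}\bigr)\Bigr)
= e^{j\kappa_n(\theta^*)}\,\mathbb{E}\Bigl(e^{-\theta^* T_j}\,g\bigl((T_i)_{1\le i\le j}\bigr)\Bigr),
\]
valid for every $j\ge 1$ and every measurable $g:\mathbb{R}^{j}\to\mathbb{R}_+$, where $(T_i)=(T^{(n)}_i)$ is the walk with step law \eqref{mh}. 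The only hypothesis this needs is $\mathbb{E}\bigl(\sum_{|u|=1}e^{\theta^* S_u}\bigr)=e^{\kappa_n(\theta^*)}<\infty$, which holds automatically because $\Lambda(\theta^*)<\infty$ and $m<\infty$; in particular the stronger moment condition \eqref{F} is not used at this step.

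If a self-contained argument is wanted, I would prove the displayed identity by induction on $j$. For $j=1$ it is precisely the definition \eqref{mh} of the $\theta^*$-tilted step, read with $h(x)=e^{-\theta^* x}g(x)$. For the inductive step I would condition on the first generation $\mathcal{H}_1$ and use the branching property: given $\mathcal{H}_1$, the subtrees hanging from the vertices $u\in\mathcal{H}_1$ are independent copies of the whole process translated by $S_u$, so the sum over $|u|=j$ decomposes into a sum over $u\in\mathcal{H}_1$ of the generation-$(j-1)$ quantity for each translated subtree, to which the induction hypothesis applies; the $j$ factors $e^{\kappa_n(\theta^*)}$ then multiply together.

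It remains to rewrite the right-hand side in the asserted shape. Using $\kappa_n(\theta)=b_n(\log m+\Lambda(\theta))$ together with assumption \eqref{K} and $v=\Lambda'(\theta^*)$, one gets $\kappa_n(\theta^*)=b_n\bigl(\Lambda(\theta^*)+\log m\bigr)=b_n\theta^*\Lambda'(\theta^*)=\theta^* b_n v$, hence
\[
e^{j\kappa_n(\theta^*)}e^{-\theta^* T_j}=e^{-\theta^*(T_j-jb_nv)}=e^{-\theta^*\bar T_j},
\]
and substituting $T_i=\bar T_i+ib_nv$ turns the expectation into $\mathbb{E}\bigl(e^{-\theta^*\bar T_j}\,g((\bar T_i+ib_nv)_{1\le i\le j})\bigr)$, the exponential being taken at the terminal value $\bar T_j=T_j-jb_nv$, which is the claim. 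There is no serious obstacle here: the statement is essentially bookkeeping, and the one mild point---that the offspring number $Z_{b_n}$ is random---is already absorbed into the general many-to-one lemma and requires nothing beyond $m<\infty$.
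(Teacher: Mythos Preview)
Your proposal is correct and follows essentially the same approach as the paper: the paper verifies the case $j=1$ directly from \eqref{mh} together with $b_n v=\kappa_n(\theta^*)/\theta^*$, and then appeals to induction exactly as in \cite[Theorem 1.1]{zbMATH06492274}. You supply a bit more detail on the inductive step via the branching property and on deriving $\kappa_n(\theta^*)=\theta^* b_n v$ from \eqref{K}, and you correctly note that the exponential factor is $e^{-\theta^*\bar T_j}$ at the terminal index, but there is no substantive difference in method.
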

		\begin{proof}
			For $j=1$, by $\eqref{mh}$ and using that $b_nv=\frac{\kappa_n{(\theta^*)}}{\theta^*},$ we have
			$$\mathbb{E}\left(\sum_{|u|=1}g(S_{u})\right)=\mathbb{E}(e^{-\theta^{*}T_1+\kappa_n(\theta^{*})}g(T_1))=\mathbb{E}\left(e^{-\theta^{*}\bar{T_1}}g(\bar{T_1}+b_nv)\right)$$
			where $\bar{T}_{1}=T_{1}-b_nv$.
			 We complete the proof by induction in the the same way as in \cite[Theorem 1.1]{zbMATH06492274}.
		\end{proof}
	\subsection{Random walk estimates} 
	In this section we introduce 
	some estimates for the asymptotic behaviour of functionals of the random walks, such us the probability to stay above a boundary.
We first give an estimate for the probability that a random walk stays above a boundary $(f_n)_{n\in{\mathbb{N}}}$, that is $O(n^{1/2-\epsilon})$ for some $\epsilon>0$.  %
% This lemma was introduced in \cite[Lemma 3.6]{zbMATH06471546}.
	\begin{lemma} \cite[Lemma 3.6]{zbMATH06471546}.
		\label{MA}
		Let $(w_n)_{n\in\mathbb{N}}$ be a centred random walk with finite variance. Fix $\epsilon>0$, there exists $C>0$ such that  $$ \mathbb{P}(w_k \geq -(k^{1/2-\epsilon}+y),k\leq n)\leq C\frac{1+y}{\sqrt{n}}$$ for any $y>0$.
	\end{lemma}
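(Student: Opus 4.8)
The statement is a standard fluctuation estimate for a random walk kept above a boundary growing slower than $\sqrt{n}$, and the natural plan is to reduce it to the classical flat‑barrier bound and then absorb the term $k^{1/2-\epsilon}$ through a recursion over dyadic time scales.

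First I would record the classical input: for a centred walk with finite variance there exists $C_0$ with $\mathbb{P}(w_k\ge -a,\ k\le n)\le C_0(1+a)/\sqrt{n}$ for all $n\ge 1$ and $a\ge 0$; this follows from optional stopping applied to the renewal function $h$ of the strict descending ladder epochs (nonnegative, harmonic for the walk killed below $0$, with $h(x)\asymp 1+x$), together with the local limit estimate $\mathbb{P}_x(w_k\ge 0,\ k\le n)\asymp h(x)/\sqrt{n}$. Two reductions simplify the task. The claimed inequality is vacuous once $y\ge c_0\sqrt{n}$ for a suitable small $c_0=c_0(C_0)$ (then its right‑hand side exceeds a fixed constant, while the left‑hand side is at most $1$), so one may assume $y\le c_0\sqrt{n}$. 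And, conditioning at $m=\lfloor n/2\rfloor$ and using that $t\mapsto t^{1/2-\epsilon}$ is subadditive, i.e. $(m+l)^{1/2-\epsilon}\le m^{1/2-\epsilon}+l^{1/2-\epsilon}$, the Markov property gives, writing $E_m=\{w_k\ge -(k^{1/2-\epsilon}+y),\ k\le m\}$ and denoting by $P(n,y)$ the probability to be bounded,
\[
P(n,y)\ \le\ \mathbb{E}\bigl[\mathbf{1}_{E_m}\,P(n-m,\ w_m+m^{1/2-\epsilon}+y)\bigr],
\]
because on $E_m$ the shift $a:=w_m+m^{1/2-\epsilon}+y$ is nonnegative and, to survive past $m$, the future increments must stay above $-(l^{1/2-\epsilon}+a)$.

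I would then run a strong induction on $n$. Bounding the inner probability by the flat estimate, $P(n-m,a)\le C_0(1+a)/\sqrt{n-m}\le 2C_0(1+a)/\sqrt{n}$, yields
\[
P(n,y)\ \le\ \frac{2C_0}{\sqrt{n}}\Bigl[(1+y)\,P(m,y)+M(m,y)\Bigr],\qquad M(m,y):=\mathbb{E}\bigl[\mathbf{1}_{E_m}(w_m+m^{1/2-\epsilon}+y)\bigr].
\]
Using the induction hypothesis $P(m,y)\le C(1+y)/\sqrt{m}$, the first term is $O\bigl(C(1+y)^2/n\bigr)$, which is at most $\frac{1}{2}C(1+y)/\sqrt{n}$ precisely because we arranged $y\le c_0\sqrt{n}$; hence the induction closes as soon as one has the companion estimate $M(m,y)\le C_M(1+y)$ with $C_M$ independent of $C$ (one then fixes $C$ large in terms of $C_0,C_M$ and the finitely many base cases). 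This companion estimate is the real obstacle. I would derive it from the local version of the flat‑barrier bound, namely $\mathbb{P}(w_k\ge -(k^{1/2-\epsilon}+y),\ k\le m,\ w_m+m^{1/2-\epsilon}+y\in[z,z+1))\le C(1+y)(1+z)/m^{3/2}$ for $z\lesssim\sqrt{m}$ (with a Gaussian‑type tail beyond), which rests on the same renewal‑function and local‑CLT machinery; summing $z\,(1+y)(1+z)/m^{3/2}$ over $z\lesssim\sqrt{m}$ then gives $M(m,y)=O(1+y)$. The point throughout is that $k^{1/2-\epsilon}=o(\sqrt{k})$, so on each dyadic block the boundary is a lower‑order perturbation of the natural fluctuation scale.

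The main difficulty is therefore the book‑keeping that makes this $\Theta(\log n)$‑fold recursion stable: one must propagate the survival estimate together with the first‑moment (equivalently local) estimate and verify that the boundary contributes a summable perturbation at each scale — exactly the role played by the $n^{-\epsilon}$‑type gain from $k^{1/2-\epsilon}\ll\sqrt{k}$ and by the reduction to $y\lesssim\sqrt{n}$. In the Gaussian case one can shortcut parts of this with explicit reflection computations; in the general finite‑variance case the renewal‑theoretic route is the clean one. Alternatively, since the lemma is quoted from \cite[Lemma 3.6]{zbMATH06471546}, one may simply invoke it there, the above being a sketch of its proof.
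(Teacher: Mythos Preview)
The paper does not prove this lemma at all: it is stated with a citation to \cite[Lemma 3.6]{zbMATH06471546} and used as a black box. So there is no ``paper's own proof'' to compare against; your final sentence, that one may simply invoke the cited reference, is exactly what the paper does.

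Your sketch is in the right spirit (dyadic recursion plus the flat-barrier ballot estimate), but the justification of the companion bound $M(m,y)\le C_M(1+y)$ has a circularity: the ``local version'' you invoke,
\[
\mathbb{P}\bigl(w_k\ge -(k^{1/2-\epsilon}+y),\ k\le m,\ w_m+m^{1/2-\epsilon}+y\in[z,z+1)\bigr)\le C\,\frac{(1+y)(1+z)}{m^{3/2}},
\]
is a \emph{curved}-barrier local estimate, at least as strong as the global statement you are proving. If instead you use the genuine flat-barrier input (barrier at level $-(m^{1/2-\epsilon}+y)$), the same computation yields only $M(m,y)\le C_M\bigl(1+y+m^{1/2-\epsilon}\bigr)$. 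This is not fatal: plugging it into your recursion produces, at each dyadic halving, an additive error of order $m^{1/2-\epsilon}/\sqrt{n}=O(n^{-\epsilon})$, and these errors sum geometrically over the $O(\log n)$ scales to a bounded constant that can be absorbed into $C$. This is precisely the ``$n^{-\epsilon}$-type gain'' you allude to in your closing paragraph, but it should replace the unjustified claim $M(m,y)\le C_M(1+y)$ rather than sit alongside it. With that correction (or by carrying the global and local estimates through the induction simultaneously, which is how the cited reference proceeds), the argument goes through.
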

	From now on we  use the random walks $(T_k)_{k\geq1}$ and $(\bar{T}_k)_{k\geq1}$ defined in \eqref{mh}, unless otherwise stated. In the next lemma we will give an approximation of the probability for a random walk to end up in a finite interval using the Stone's local limit theorem \cite{zbMATH03374724}.
	\begin{lemma}
		\label{Stone}
		Let $f\in{\mathcal{C}^{l,+}_b}$  be a Riemann integrable function, and let $(r_n)_{n\in{\mathbb{N}}}$  be a sequence of positive real numbers, such that $\lim_{n\to \infty }\frac{r_n}{\sqrt{n}}=0$. Set  $$a_n=\frac{-3}{2\theta^{*}}\log(n)+\frac{\log(b_n)}{\theta^{*}}$$ then we get $$
		 \mathbb{E}(f(\bar{T}_{k_n}-a_n+x)e^{-\theta^{*}\bar{T}_{k_n}})=\frac{e^{\theta^*x}n^{3/2}}{b_n\sqrt{2\pi\sigma^2 
		 k_nb_n}}\int f(y)e^{-\theta^{*}y} dy(1+o(1))$$
		uniformly in $x\in{[-r_n, r_n]}$.
	\end{lemma}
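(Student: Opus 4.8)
The plan is to reduce the expectation to an application of Stone's local limit theorem for the random walk $(\bar T_k)_{k\geq 1}$, which under $\mathbf{(H_1)}$ is an exact centred Gaussian of variance $k_n b_n$ and under $\mathbf{(H_2)}$ is a random walk of $k_n b_n$ i.i.d.\ steps satisfying Cramér's condition (so Stone's theorem applies). First I would rewrite the quantity of interest by unfolding the definition of the expectation with respect to the density/mass function of $\bar T_{k_n}$: writing informally $\mathbb{P}(\bar T_{k_n}\in dz)$, we have
\[
\mathbb{E}\!\left(f(\bar T_{k_n}-a_n+x)e^{-\theta^*\bar T_{k_n}}\right)=\int f(z-a_n+x)e^{-\theta^* z}\,\mathbb{P}(\bar T_{k_n}\in dz).
\]
The dominant contribution comes from $z$ near $a_n-x$ (because $f$ has support bounded on the left and decays — or rather is bounded — while $e^{-\theta^* z}$ forces $z$ not too negative, and $f(z-a_n+x)=0$ for $z$ much smaller than $a_n-x$); substituting $z=a_n-x+y$ turns the integral into $e^{-\theta^*(a_n-x)}\int f(y)e^{-\theta^* y}\,\mathbb{P}(\bar T_{k_n}\in a_n-x+dy)$. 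Now $a_n-x = -\tfrac{3}{2\theta^*}\log n+\tfrac{1}{\theta^*}\log b_n - x$, which is of order $\log n$, hence $o(\sqrt{k_n b_n})$ since $k_nb_n\asymp n$; and $y$ ranges over the (left-bounded) support of $f$, so $a_n-x+y$ stays in the regime where the local CLT gives $\mathbb{P}(\bar T_{k_n}\in a_n-x+dy)\sim \frac{1}{\sqrt{2\pi\sigma^2 k_n b_n}}\,dy$, uniformly. This produces the factor $\frac{1}{\sqrt{2\pi\sigma^2 k_n b_n}}$, and $e^{-\theta^*(a_n-x)} = e^{\theta^* x} n^{3/2}/b_n$, giving exactly the claimed right-hand side.

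The key steps in order: (i) rewrite the expectation as the integral above and perform the shift $z\mapsto a_n-x+y$; (ii) justify truncating the $y$-integral to a compact set — on the left this is automatic from $f\in\mathcal{C}^{l,+}_b$, and on the right one uses that $f$ is bounded together with a tail bound on $\mathbb{P}(\bar T_{k_n}\ge a_n-x+R)$, which is crude and costs a term vanishing as $R\to\infty$ uniformly in $x$; (iii) on the compact set apply the (lattice or non-lattice) local limit theorem to replace the local probability by the Gaussian density at $0$, using that $(a_n-x+y)/\sqrt{k_nb_n}\to 0$ uniformly for $x\in[-r_n,r_n]$ since $r_n=o(\sqrt n)$ and $a_n=O(\log n)$; (iv) recognize that $f$ Riemann integrable lets us pass from the Riemann-sum form of the local-limit approximation to $\int f(y)e^{-\theta^* y}\,dy$; (v) collect the prefactors. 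Under $\mathbf{(H_1)}$ step (iii) is an exact Gaussian computation (no LLT needed); under $\mathbf{(H_2)}$ one invokes Stone's theorem (or Corollary \ref{stonebis}) and the condition $b_n/\log(n)^2\to\infty$ is what guarantees the local-limit error is uniform over the relevant window.

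The main obstacle I expect is the \emph{uniformity} in $x\in[-r_n,r_n]$ combined with the fact that the target point $a_n-x$ moves with $n$ and is not $O(1)$ but $O(\log n)$: one must check that the error terms in the local limit theorem (which are typically $o(1)$ \emph{for fixed} displacement, or $o(1/\sqrt n)$ uniformly over displacements of order $o(\sqrt n)$) are genuinely uniform over the whole family of points $a_n-x+y$ with $x\in[-r_n,r_n]$ and $y$ in the support window. In the non-Gaussian case this is precisely where the refined Stone-type estimate of \cite{borovkov2017generalization} (via Corollary \ref{stonebis}) is needed rather than the classical statement, and where the hypothesis $b_n/\log(n)^2\to\infty$ enters. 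A secondary technical point is controlling the right tail in step (ii) uniformly in $x$ — but since $e^{-\theta^* y}$ is integrable on the right and $f$ is bounded, a Markov/Chebyshev bound on $\bar T_{k_n}$ suffices and introduces no real difficulty.
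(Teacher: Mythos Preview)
Your approach is essentially the same as the paper's: both reduce the expectation to an application of Stone's local limit theorem for the sum of $k_nb_n$ i.i.d.\ centred steps, after a change of variables that extracts the prefactor $e^{-\theta^*(a_n-x)}=e^{\theta^* x}n^{3/2}/b_n$ and localises to a bounded window in $y$. The paper carries out the localisation by successive approximation of $h(z)=e^{-\theta^* z}f(z)$ (indicator $\to$ step $\to$ compactly supported continuous $\to$ general via a cutoff $\chi$), whereas you truncate the $y$-integral directly; the two are equivalent in content.

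Three small inaccuracies to flag. First, Corollary~\ref{stonebis} is not the right reference here: that corollary concerns the \emph{barrier} version (Lemma~\ref{Stone barrier} under $\mathbf{(H_2)}$), not the unconstrained local limit. For the present lemma the paper simply applies the classical Stone LLT to $\bar T_{k_n}$, viewed as a sum of $k_nb_n$ i.i.d.\ copies of $\bar W_1$, and this works under either $\mathbf{(H_1)}$ or $\mathbf{(H_2)}$ with no extra input. Second, and relatedly, the hypothesis $b_n/\log(n)^2\to\infty$ plays no role in this lemma; it is used elsewhere (via the KMT coupling) but not here. Third, in your step~(ii) a bare Markov/Chebyshev bound on $\mathbb{P}(\bar T_{k_n}\ge a_n-x+R)$ does \emph{not} suffice to make the right tail $o(1/\sqrt{k_nb_n})$ uniformly: you need instead the uniform interval bound $\mathbb{P}(\bar T_{k_n}-a_n+x\in[j,j+1])\le C/\sqrt{k_nb_n}$ coming from Stone's theorem itself, summed against $e^{-\theta^* j}$ over $j\ge R$. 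This is exactly what the paper does to control the piece $\mathbb{E}\big(h(\bar T_{k_n}-a_n+x)\mathbf{1}_{\{\bar T_{k_n}-a_n+x>B\}}\big)\le C\|f\|_\infty e^{-\theta^* B}/\sqrt{k_nb_n}$, and it is the natural fix for your outline as well.
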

\begin{proof}
	 By setting $h(z)=e^{-\theta^* z}f(z),$
	 it is enough to prove that \begin{align}
	 \label{501}
	 &\mathbb{E}(h(\bar{T}_{k_n}-a_n+x))=
	 \frac{1}{\sqrt{2\pi\sigma^2
	 k_nb_n}}\int h(y) dy(1+o(1))
	 \end{align}
	 uniformly in $x\in{[-r_n, r_n]}$. 
	   We  prove this lemma by successive approximations of the function $h$, 
	  starting with an indicator function. Set $h(z) =\mathbf{1}_{[a,b]}(z)$ for some $a<b\in{\mathbb{R}}$, then we write 
	\begin{align}
	\label{eee}
	&\mathbb{E}(h(\bar{T}_{k_n}-a_n+x))=\mathbb{P}\left(\bar{T}_{k_n}-a_n+x\in[a,b]\right),
	\end{align}	
As $\bar{T}_1$ is the sum of $b_n$ i.i.d. copies of
$\bar{Z}_1$, $\bar{T}_{k_n}$ is the sum of $k_n b_n$ i.i.d. centred
random variables with finite variance, therefore we can apply the  Stone's local limit theorem \cite{zbMATH03374724} to obtain 
	\begin{align*}
	&\mathbb{P}(\bar{T}_{k_n}-a_n+x\in[a,b])=\frac{b-a}{\sqrt{2\pi \sigma^2
	k_nb_n}}\exp\left(\frac{-(a_n-x)^2}{2k_nb_n\sigma^2}\right)(1+o(1)) =\frac{b-a}{\sqrt{2\pi  k_nb_n\sigma^2}}(1+o(1)),
	\end{align*} 
	uniformly in $x\in[-r_n,r_n]$, which completes the proof of \eqref{501} in that case.
	
 We now assume that $h$ is a continuous function  with compact support, we prove \eqref{501} by approximating it by simple functions. Denote by $[a,b]$ the support of $h$. Let $(t_i)_{0\leq i\leq m}$ be an uniform subdivision  of  $[a,b]$ where $m\in{\mathbb{N}}$ is the number of the subdivisions and  $t_i= a+i(b-a)/m$ for $0\leq i\leq m$. Set 
	$$\underline{h}_m(x) =\sum_{i=0}^{m-1} m_i\mathbf{1}_{\left\{x \in [t_i,t_{i+1}]\right\}}
\hspace{0.3cm}\text{and}\hspace{0.3cm}  \bar{h}_m(x)=\sum_{i=0}^{m-1} M_i\mathbf{1}_{\left\{x \in{ [t_i,t_{i+1}]}\right\}},$$
	 where $M_i=\sup_{z \in [t_i,t_{i+1}]}h(z)$ and $m_i=\inf_{z \in [t_i, t_{i+1}]}h(z)$.
	Hence  using the Riemann sum approximation and the fact that $f$ is a non-negative function, %
	for all $\epsilon>0$, there exists $m_0$ such that for all $m\geq m_0$ we have
	\begin{align}
	\label{approx}
	&(1-\epsilon)\int_{a}^{b} h(y)dy \leq \int_{a}^{b} \underline{h}_m(y)dy \leq \int_{a}^{b} \bar{h}_m(y)dy \leq
	(1+\epsilon)\int_{a}^{b} h(y)dy,
	\end{align}
	where $\int_{a}^{b} \underline{h}_m(y)dy=\sum_{i=0}^{m-1} \frac{b-a}{m}m_i$ and  $\int_{a}^{b} \bar{h}_m(y)dy=\sum_{i=0}^{m-1} \frac{b-a}{m}M_i$.
	
	Using equation \eqref{eee} we have
	\begin{align*}
&\mathbb{E}\left(\bar{h}_m(\bar{T}_{k_n}-a_n+x)\right)=\sum_{i=0}^{m-1}M_i\mathbb{P}\left(\bar{T}_{k_n}-a_n+x\in{[t_i, t_{i+1}[}\right)\displaystyle=\frac{1}{\sqrt{2\pi \sigma^2
k_nb_n}}\sum_{i=0}^{m-1} \frac{b-a}{m}M_i(1+o(1))\\&=\frac{1}{\sqrt{2\pi  \sigma^2
k_nb_n}}\int_{a}^{b} \bar{h}_m(y)dy(1+o(1)).
\end{align*}
Therefore, using that
$\mathbb{E}(h(\bar{T}_k -a_n + x) \leq
\mathbb{E}(\bar{h}_m(\bar{T_k} - a_n + x)$ and by \eqref{approx} we deduce that
\begin{align*}
&\limsup_{n\to\infty}\sup_{x\in{[0,r_n]}}\sqrt{k_n b_n}\mathbb{E}\left(h(\bar{T}_{k_n}-a_n+x)\right)\leq (1+\epsilon)\frac{1}{\sqrt{2\pi}
\sigma^2}
\int_{a}^{b}h(y)dy.
\end{align*}
Using similar arguments we have 
\begin{align*}
&\liminf_{n\to\infty}\inf_{x\in{[0,r_n]}}\sqrt{k_nb_n}\mathbb{E}\left(h(\bar{T}_{k_n}-a_n+x)\right)\geq (1-\epsilon)\frac{1}{\sqrt{2\pi \sigma^2
}}\int_{a}^{b}h(y)dy.
\end{align*}
Finally, letting $\epsilon \to 0$ completes the proof of \eqref{501} when $h$ is
a compactly support function.
	Finally we consider the general case, and assume that $f$ is bounded
with bounded support on the left. We introduce the function

	\begin{equation}
	\nonumber
	\chi(u)=
	\left\lbrace
	\begin{array}{ccc}
	1 & \mbox{if} & u<0\\
	1-u & \mbox{if} & 0\leq u\leq 1\\
	0 & \mbox{if} & u>1
	\end{array}\right.
	\end{equation} then we write,
	\begin{align*}
	&\mathbb{E}(h(\bar{T}_{k_n}-a_n+x))=\mathbb{E}\left(h(\bar{T}_{k_n}-a_n+x)\chi(\bar{T}_{k_n}-a_n+x-B)\right)\\&+\mathbb{E}\left(h(\bar{T}_{k_n}-a_n+x)(1-\chi(\bar{T}_{k_n}-a_n+x-B))\right)
	\end{align*} 
	for some $B>0$. Observe that the function $z\mapsto h(z)\chi(z-B)$ is continuous with compact support, then using previous result, we have 	
	\begin{align}
	   \label{Local}
	&\nonumber\mathbb{E}(h(\bar{T}_{k_n}-a_n+x))=\frac{1}{\sqrt{2\pi \sigma^2
	k_nb_n}}\int h(y)\chi(y-B)dy(1+o(1))\\+&\mathbb{E}\left(h(\bar{T}_{k_n}-a_n+x)(1-\chi(\bar{T}_{k_n}-a_n+x-B))\right). 
	\end{align}
	 Thanks to the Stone's local limit theorem \cite{zbMATH03374724} there exists a constant $C>0$ such that  the  quantity  \eqref{Local} is bounded by 
	\begin{align*}
	&\mathbb{E}\left(h(\bar{T}_{k_n}-a_n+x)(1-\chi(\bar{T}_{k_n}-a_n+x-B))\right)\leq \mathbb{E}\left(h(\bar{T}_{k_n}-a_n+x )\mathbf{1}_{\left\{\bar{T}_{k_n}-a_n+x>B\right\}}\right)\\&\leq||f||_{\infty}\mathbb{E}\left(\sum_{j\geq B}e^{-\theta^*j}\mathbf{1}_{\left\{\bar{T}_{k_n}-a_n+x\in{[j,j+1]}\right\}}\right)\leq C ||f||_{\infty}\frac{e^{-\theta^*B}}{\sqrt{k_nb_n \sigma^2
	}}.
	\end{align*}
	On the other hand by the dominated convergence theorem we have  
	\begin{align*}
	&\lim_{B\to\infty}\frac{1}{\sqrt{2\pi\sigma^2 k_nb_n}}\int h(y)\chi(y-B)dy= \frac{1}{\sqrt{2\pi \sigma^2
	k_nb_n}}\int h(y)dy,
	\end{align*}
 %Now using similar arguments to those used in the last case 
   as a consequence we deduce that  $$\mathbb{E}(h(\bar{T}_{k_n}-a_n+x))=
 \frac{1}{\sqrt{2\pi \sigma^2
 k_nb_n}}\int f(y)e^{-\theta^{*}y}dy(1+o(1)),$$
 which completes the proof.
\end{proof}
	\subsubsection{Random walk with Gaussian steps}
	In this section we assume that $\mathbf{(H_1)}$ holds, i.e that $(\bar{T_k})_{k\geq 0}$ is a Gaussian random walk. Let $(\beta_n(k),k\leq k_n)$  be the standard discrete Brownian bridge with $k_n$ steps, which can be defined as,
	$$\beta_n(k)=\frac{1}{\sqrt{b_n}}(\bar{T}_{k}-\frac{k}{k_n}\bar{T}_{k_n}).$$
	 In the following lemma we estimate the probability for a Brownian bridge to stay below a boundary during all his lifespan.
	 This lemma was introduced in \cite[proposition $1$]{zbMATH03562205} for continuous time Brownian motion  which also hold for the discrete time version. 
	\begin{lemma}
		\label{4}
	Let $h$ be the function defined by 
		$$
		h(k) = \left\{
		\begin{array}{ll}
		0 & \mbox{if } k =0 \hspace{0.1cm} or\hspace{0.1cm} k=k_n \\
		a\log((k_n-k)\land k)b_n)+
		1) & \mbox{otherwise.}
		\end{array}
		\right.
		$$
	 where $a$ is a positive constant.
		 There exists a constant $C>0$ such that for all $x>0$ and $n\geq 0$ we have
		\begin{align}
		&\mathbb{P}\left(
	\beta_n(k)\leq\frac{1}{\sqrt{b_n}}(h(k)+x),k\leq k_n \right)\leq C\frac{(1+\frac{x}{\sqrt{b_n}})^{2}}{k_n}.
		\end{align}
	\end{lemma}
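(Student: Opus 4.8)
The plan is to reduce the bound on the Brownian bridge $\beta_n$ to a statement about a standard Brownian bridge (in continuous time) and then invoke the known estimate for the probability that such a bridge stays below a logarithmic barrier. First I would rescale: writing $g(t) = a\log(b_n(k_n t \wedge k_n(1-t)) + 1) + x$ for $t \in [0,1]$ and recalling that $\beta_n$ is, up to the factor $1/\sqrt{b_n}$, a discrete Gaussian bridge with $k_n$ steps, I would compare the discrete bridge with a continuous standard Brownian bridge $(\mathfrak{b}(t))_{t\in[0,1]}$ sampled at times $k/k_n$. Since the increments are exactly Gaussian under $\mathbf{(H_1)}$, the discrete bridge $(\beta_n(k)/\sqrt{k_n})_{k\le k_n}$ is literally the standard bridge observed at the dyadic-type mesh $k/k_n$; one can therefore directly use a continuous-time bound, at the cost of controlling the event only at the mesh points, which only makes the probability larger, so an upper bound transfers for free.

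The core input is the estimate of \cite[proposition $1$]{zbMATH03562205} (noted in the excerpt to hold in discrete time as well): for a standard Brownian bridge on $[0,1]$ and a barrier of the shape $C_1\log(1/(t\wedge(1-t))) + z$ with $z>0$, the probability of staying below it is $O((1+z)^2)$ — more precisely $O((1+z)^2 / (\text{length}))$ once one unfolds the bridge of length $k_n$. So the key step is to check that the barrier $h(k) + x$, after division by $\sqrt{b_n}$ and the bridge rescaling, has exactly this logarithmic-plus-constant form with the correct variables: the term $a\log((k_n-k)\wedge k)b_n) + 1)$ becomes, in the rescaled time $t = k/k_n$, a constant (absorbed into the barrier height) plus $a\log(k_n(t\wedge(1-t)))$, i.e. the admissible logarithmic singularity at both endpoints, while $x$ contributes the shift $z = x/\sqrt{b_n}$ after the $1/\sqrt{b_n}$ normalization inherent in the definition of $\beta_n$. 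Tracking constants carefully, the additive $+1$ inside the logarithm and the $b_n$ factor only change $z$ by an $O(1)$ amount and shift the barrier by a bounded quantity, both harmless for an upper bound of the stated form.

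I would then assemble the pieces: apply the bridge estimate with length $k_n$ and shift $z = O(1 + x/\sqrt{b_n})$ to get an upper bound of order $(1 + x/\sqrt{b_n})^2 / k_n$, which is precisely $C(1+x/\sqrt{b_n})^2/k_n$ after adjusting $C$. The statement being uniform in $n \ge 0$ is automatic since the bound from \cite{zbMATH03562205} is itself uniform once the barrier is in normalized form, and nothing in the argument depends on $b_n \to \infty$ beyond positivity.

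The main obstacle I anticipate is not the probabilistic heart — that is essentially quoted — but the bookkeeping at the two endpoints: near $k = 0$ and $k = k_n$ the barrier $h$ has a genuine logarithmic blow-up, and one must make sure the cited ballot-type estimate is stated (or can be stated) for a barrier that is logarithmically unbounded near \emph{both} ends of the interval rather than just staying above a constant. If the reference only covers a one-sided logarithmic barrier, I would split $[0,1]$ at its midpoint, handle each half by the corresponding one-endpoint estimate together with a union bound over which half the bridge is ``closest to the barrier'' in, and combine — but I expect the symmetric version is exactly what \cite[proposition $1$]{zbMATH03562205} provides, so this is a routine check rather than a real difficulty.
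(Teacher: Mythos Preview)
Your proposal is correct and matches the paper's own treatment: the paper does not give a proof of this lemma at all, but simply states that it is taken from \cite[proposition~1]{zbMATH03562205} for continuous-time Brownian motion and asserts that the discrete-time version holds as well. Your write-up supplies the bookkeeping (rescaling $\beta_n$ to a standard bridge of length $k_n$, checking that the barrier has the right logarithmic shape with shift $O(1+x/\sqrt{b_n})$ after the $1/\sqrt{b_n}$ normalization) that the paper leaves implicit, but the route is identical.
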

 We refer to the function $k\mapsto h(k)$ as a barrier.
An application of this lemma is to give an upper bound for the probability that a random walk with Gaussian steps make an excursion above a well-chosen barrier.
		\begin{lemma}
		\label{10}
		Let $\alpha>0$, and for $0\leq k\leq k_n$ we write $f_n(k)=\alpha \log(\frac{(k_n-k)b_n+1}{k_nb_n})$. There exists $C>0$ such that for all $x \geq 0$, $a<b\in{\mathbb{R}}$ and $k\leq k_n$ we have 
		\begin{align*}
		&\mathbb{P}\left(\bar{T_k}-f_n(k)\in[a,b], \bar{T}_j\leq f_n(j)+x, j\leq k\right) \leq C(b-a)\frac{(1+\frac{x}{\sqrt{b_n}})^{2}}{\sqrt{
				b_n}k^{\frac{3}{2}}}.
		\end{align*}
	\end{lemma}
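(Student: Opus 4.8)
The plan is to bound the joint event in Lemma~\ref{10} by conditioning on the endpoint $\bar T_{k_n}$ and comparing the random walk $(\bar T_j)_{j\le k}$ to a Brownian bridge via the decomposition used to define $\beta_n$. More precisely, I would first split according to the position $\bar T_{k_n}=y$ over dyadic (or unit) windows, write $\bar T_j=\tfrac{j}{k_n}\bar T_{k_n}+\sqrt{b_n}\,\beta_n(j)$, and observe that the constraint $\bar T_j\le f_n(j)+x$ for $j\le k$, together with $\bar T_k-f_n(k)\in[a,b]$, forces the bridge $\beta_n$ to stay below a barrier of the shape $\tfrac{1}{\sqrt{b_n}}(h(j)+x')$ on $[0,k]$ for a suitable $h$ as in Lemma~\ref{4} and a shifted parameter $x'=x+O(|y|)$. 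Applying Lemma~\ref{4} (on the first $k$ steps, after re-normalizing the bridge on $[0,k_n]$ to one on $[0,k]$) gives a factor $C(1+\tfrac{x+|y|}{\sqrt{b_n}})^2/k$, while the local-limit/Gaussian-density control on $\bar T_k$ landing in an interval of length $b-a$ at height $\approx f_n(k)$ gives the factor $(b-a)/\sqrt{b_nk}$; multiplying these and summing the contributions over the windows for $y$ against the Gaussian tail of $\bar T_{k_n}$ produces the claimed $C(b-a)(1+\tfrac{x}{\sqrt{b_n}})^2/(\sqrt{b_n}k^{3/2})$.

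Carrying this out in order: (i) reduce to $a,b$ with $b-a\le 1$ by subadditivity, so that landing in $[a,b]$ is comparable to a single unit interval; (ii) condition on $\bar T_{k_n}$ and decompose $\bar T_j = \tfrac{j}{k_n}\bar T_{k_n} + \sqrt{b_n}\beta_n(j)$, noting $\beta_n$ is independent of $\bar T_{k_n}$ in the Gaussian case; (iii) check that the deterministic curve $j\mapsto f_n(j)=\alpha\log\tfrac{(k_n-j)b_n+1}{k_nb_n}$ minus its linear interpolant between $j=0$ and $j=k_n$ is dominated, up to an additive constant times the log, by $h$ from Lemma~\ref{4} — this is the routine but slightly fiddly calculus check, using $\log\tfrac{(k_n-j)b_n+1}{k_nb_n}\le 0$ and that $\log((k_n-j)\wedge j)b_n+1)$ controls it; (iv) apply Lemma~\ref{4} to the bridge on $[0,k]$ (treating $\beta_n$ restricted to $[0,k]$, which after the obvious time/space rescaling is again a discrete bridge of length $k$, picking up $1/k$ rather than $1/k_n$), with boundary parameter $x' \lesssim x + |\bar T_{k_n}|$; (v) use the Gaussian density of $\bar T_k$ (mean $\tfrac{k}{k_n}y$, variance $\tfrac{k(k_n-k)}{k_n}b_n$, or just variance $\le kb_n$) to bound $\mathbb P(\bar T_k - f_n(k)\in[a,b]\mid \bar T_{k_n}=y)$ by $C(b-a)/\sqrt{b_nk}$ uniformly; (vi) integrate the product $(1+\tfrac{x+|y|}{\sqrt{b_n}})^2 \cdot \tfrac{b-a}{\sqrt{b_n k}\,\sqrt{b_n k_n}}\,e^{-y^2/(2k_nb_n)}$ over $y\in\mathbb R$, using $\int(1+\tfrac{|y|}{\sqrt{b_n}})^2 e^{-y^2/(2k_nb_n)}\,dy \lesssim \sqrt{k_nb_n}(1+k_n)\lesssim \sqrt{k_nb_n}\,k_n$ — wait, that over-counts; one instead keeps the bridge estimate's $x'$ dependence and notes $\mathbb E[(1+\tfrac{x+|\bar T_{k_n}|}{\sqrt{b_n}})^2\mathbf 1_{\bar T_k\approx f_n(k)}]$ is, by Cauchy–Schwarz or direct Gaussian computation, $\lesssim (1+\tfrac{x}{\sqrt{b_n}})^2 \cdot \tfrac{b-a}{\sqrt{b_nk}}$ since conditionally on $\bar T_k$ near $0$ the variable $\bar T_{k_n}$ has fluctuations of order $\sqrt{(k_n-k)b_n}\le\sqrt{k_nb_n}$, and the extra $\sqrt{k_n}$ is absorbed precisely against the $1/k_n$ from Lemma~\ref{4} to leave $1/k$ — reorganizing the conditioning as "condition on $\bar T_k$, then run a bridge of length $k$ on $[0,k]$ backward" makes this bookkeeping cleanest.

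The cleanest route actually reverses the conditioning: condition on the segment up to time $k$. On $\{\bar T_k - f_n(k)\in[a,b]\}$, I would use the local limit theorem (Stone, as already invoked in Lemma~\ref{Stone}) to get the density bound $\mathbb P(\bar T_k-f_n(k)\in[a,b])\le C(b-a)/\sqrt{b_nk}$ — here $f_n(k)=O(\log k_n)$ is negligible against $\sqrt{b_nk}$, using $b_n\to\infty$. Then, conditionally on $\{\bar T_j\le f_n(j)+x,\ j\le k\}$, I must still pay the bridge-type cost that the walk stayed below the barrier; this is where Lemma~\ref{4} enters, applied to the bridge pinned at times $0$ and $k$, giving the extra factor $(1+\tfrac{x}{\sqrt{b_n}})^2/k$ after checking $f_n$ restricted to $[0,k]$ lies below the Lemma~\ref{4} barrier $h$ (which it does since $f_n\le 0$). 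Multiplying, $\tfrac{b-a}{\sqrt{b_nk}}\cdot\tfrac{(1+x/\sqrt{b_n})^2}{k} = (b-a)\tfrac{(1+x/\sqrt{b_n})^2}{\sqrt{b_n}\,k^{3/2}}$, which is exactly the claim.

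The main obstacle is step (iii)/(the barrier comparison): one must verify that the linear-interpolation correction to $f_n$ on any sub-interval $[0,k]$ is genuinely dominated by the concave logarithmic barrier $h$ of Lemma~\ref{4} uniformly in $k\le k_n$ and $n$, including the regime $k$ close to $k_n$ where $f_n(k)$ blows down like $\alpha\log\tfrac{1}{k_nb_n}$; handling the endpoint behaviour and making sure the shift parameter $x$ in the two lemmas is only inflated by a constant (not by something growing with $\log k_n$) is the delicate point. The Gaussian-walk-to-bridge independence of $\mathbf{(H_1)}$ is what makes the decomposition exact and keeps this manageable; the rest is routine.
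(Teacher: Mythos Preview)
Your ``cleanest route'' is exactly the paper's proof: pin the walk at times $0$ and $k$, use that under $(\mathbf{H_1})$ the discrete bridge $\bar T_j-\tfrac{j}{k}\bar T_k$ is independent of $\bar T_k$, bound the endpoint event by the Gaussian density $\mathbb{P}(\bar T_k-f_n(k)\in[a,b])\le (b-a)/\sqrt{kb_n}$, and apply Lemma~\ref{4} to the bridge to extract the factor $(1+x/\sqrt{b_n})^2/k$. Multiplying gives the claim. Your earlier detour through conditioning on $\bar T_{k_n}$ is both unnecessary and flawed: the restriction of the length-$k_n$ bridge $\beta_n$ to $[0,k]$ is \emph{not} a length-$k$ bridge, so step~(iv) as first written does not go through.

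There is one genuine slip. You write that the barrier comparison holds ``since $f_n\le 0$''. That is not the relevant inequality: once you subtract the linear interpolant, the bridge must stay below $f_n(j)+x-\tfrac{j}{k}\bar T_k$, and with $\bar T_k\ge f_n(k)+a$ this means below $f_n(j)-\tfrac{j}{k}(f_n(k)+a)+x$. Since $f_n(k)<0$, the correction $-\tfrac{j}{k}f_n(k)$ is \emph{positive}, so the sign of $f_n$ alone settles nothing. The paper handles this by a direct two-case check: for $j\le k/2$ one uses $\tfrac{j}{k}\log(k_nb_n/((k_n-k)b_n+1))\le \log(jb_n)+O(1)$ (monotonicity of $\log(x)/x$), and for $j\ge k/2$ one uses $\log((k_n-j)b_n+1)-\log((k_n-k)b_n+1)\le \log((k-j)b_n+1)+O(1)$; in both cases the result is bounded by $\alpha\log((j\wedge(k-j))b_n+1)+O(1)+x$, which is the barrier $h$ of Lemma~\ref{4}. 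You correctly flag this as ``the main obstacle'' in your last paragraph, so you saw the issue --- but the parenthetical ``since $f_n\le 0$'' should be replaced by this computation, not offered as a justification.
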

\begin{proof}
For $n\in{\mathbb{N}}$  we have 
\begin{align*}
&\mathbb{P}\left(\bar{T_k}-f_n(k)\in[a,b],\bar{T}_j\leq f_n(j)+x, j\leq k\right)\\&\leq \mathbb{P}\left(\bar{T}_k-f_n(k)\in{[a,b]},\bar{T}_j-\frac{j}{k}\bar{T}_k\leq f_n(j)+x-\frac{j}{k}(f_n(k)+a), j\leq k\right),
\end{align*}   
using independence between the discrete Brownian bridge  $\bar{T}_j-\frac{j}{k}\bar{T}_k$ and $\bar{T_k}$ we obtain 
\begin{align}
&
\label{9}
\mathbb{P}\left(\bar{T_k}-f_n(k)\in[a,b],\bar{T}_j-\frac{j}{k}\bar{T}_k\leq f_n+x-\frac{j}{k}(f_n(k)), j\leq k\right)\\& \nonumber \label{6}\leq \mathbb{P}\left(\bar{T}_k-f_n(k)\in [a,b]\right) \mathbb{P}\left( \bar{T}_j-\frac{j}{k}\bar{T}_k\leq f_n(j)+x-\frac{j}{k}(f_n(k)+a), j\leq k\right).
\end{align}
To estimate the probability that a discrete Brownian bridge stay below a logarithmic barrier, we apply Lemma \ref{4}. First observe that the function $ x\mapsto \frac{\log(x)}{x}$ is decreasing for $x\geq e$, and using that $(k_n-j)b_n+1\leq (k_n-k)b_n+1+(k-j)b_n+1\leq 2((k_n-k)b_n+1)(k-j)b_n+1))$, we have for $j\leq\frac{k}{2}$,
\begin{eqnarray*}
	f_n(j)+x-\frac{j}{k}(f_n(k)+a)&\leq& 
	\alpha\frac{j}{k}\left(\log(\frac{k_nb_n}{(k_n-k)b_n+1})-\log(\frac{k_nb_n}{(k_n-j)b_n+1})\right)+x\\&\leq &\alpha\frac{j}{k}(\log(kb_n)+\log(2))+x \leq\alpha(\log((jb_n\lor e))+\log(2))+x
\end{eqnarray*}
and for $\frac{k}{2}\leq j \leq k$, we have
\begin{eqnarray*}
	f_n(j)+x-\frac{j}{k}(f_n(k)+a)&\leq &  \alpha(\log(\frac{k_nb_n}{(k_n-k)b_n+1})+x-\log(\frac{k_nb_n}{(k_n-j)b_n+1}))\\&\leq& \alpha(\log(((k_n-j)b_n+1)-\log((k_n-k)b_n+1)))+x\\&\leq& \alpha(\log(2)+\log(1+(k-j)b_n)+x.
\end{eqnarray*}
Then by Lemma \ref{4} we get after rescaling by $\frac{1}{\sqrt{b_n}}$ the following upper bound
\begin{align*}
&\mathbb{P}\left( \bar{T}_j-\frac{j}{k}\bar{T}_{k}\leq f_n(k)-\frac{j}{k}(f_n(j)-x), j\leq k\right)\\&\leq  \mathbb{P}\left(%\frac{1}{\sqrt{b_n}}(\bar{T}_j-\frac{j}{k}\bar{T}_k)%
\beta_n(k) \leq \alpha(\log((k\land(k-j))+1))+\frac{x}{\sqrt{b_n}}+1, j\leq k\right)\leq C\frac{(1+\frac{x}{\sqrt{b_n}})^{2}}{k},
\end{align*}
where $C$ is a positive constant. To bound the first quantity in \eqref{9} we use the Gaussian estimate
\begin{align*}
&\mathbb{P}\left(\bar{T_k}-f_n(k)\in[a,b]\right)\leq \frac{b-a}{\sqrt{kb_n%\textcolor{blue}{\sigma^2}%
}}
\end{align*}
which completes the proof.	
	\end{proof}
	   	From now  we denote by $B_n(k)=\frac{\bar{T}_k}{\sqrt{b_n}}$. Recall that under $(\mathbf{H}_1),$ $(B_n(k))_{k\leq k_n}$ is a standard random walk with i.i.d Gaussian steps.
	    	Define the function $L:(0,\infty)\to(0,\infty)$ by $L(0)=1$ and 
	\begin{align}
	%\label{equationed14}
	   & L(x):=\sum_{k\geq 0}\mathbf{P}\left(B_n(k) \geq -x, B_n(k) \leq \min_{j\leq k-1} B_n(j)
	\right) \hspace{0.5cm} \text{ for } x>0.
	\end{align}
	It is known by \cite[section XII.7] {feller1971introduction}, that the function $L$ is the renewal function associated to the random walk $(B_n(k))_{k\geq 0}$. We will cite some properties that are mentioned  in \cite[section XII.7] {feller1971introduction}. The fundamental property of the renewal function is  
	\begin{align}
	    \label{mhhh}
	    &L(x)=\mathbb{E}\left(L(x+B_n(1)
	    )\mathbf{1}_{\left\{x+B_n(1)
	    \geq 0\right\}}\right),
	\end{align}
	and is a  right-continuous and non-decreasing function. Since in case $(\mathbf{H_1})$, the initial law has no atoms, then the function $L$ is continuous.
   Also, there exists a constant $c_0>0$ such that
	\begin{align}
	\label{565}
		&\lim_{x\to\infty}\frac{L(x)}{x}=c_{0}.
	\end{align}
	In particular there exists a constant $C>0$  such that for all $x\in {\mathbb{R}}$
	\begin{align}
	\label{renewl}
	    &L(x)\leq C (1+x_{+}).
	\end{align}
Also we have by \cite[section XII.7] {feller1971introduction} , for $x,y\geq 0$
\begin{align}
\label{1v1}
    &L(x+y)\leq 2 L(x) L(y) .
\end{align}
	Similarly, we define $L_{-}(x)$ as the renewal function associated to $-B$.
	
	 Since $\bar{T}$ is a symmetric law we have $L_{-}(x)=L(x)$ for all $x\geq 0$. 
	It is also known  that there exists a positive constant $C_1$ such that for $y\geq 0$ 
	\begin{align}
	\label{renewal}
		&\mathbb{P}\left(\min_{k\leq k_n}(B_n(k)
		)\geq -y\right)\sim_{n\to\infty}C_1\frac{L(y)}{\sqrt{k_n}}.
	\end{align}
	 By Theorem 3.5 in \cite{spitzer1960tauberian}, assuming that $B$ is Gausian  we have $C_1=\frac{1}{\sqrt{\pi}}$. 
	We now introduce 
	an approximation of the probability for a random walk to stay below a line and end up in a finite interval .
	  Set $$\tilde{F}_n(k)=\frac{k}{k_n}a_n=\frac{k}{k_n}(m_n-k_nb_nv),  k=0..., k_n, \hspace{0.2cm} n\in{\mathbb{N}}.$$
	\begin{lemma}
		\label{Stone barrier} 
		Let $(r_n)_{n\in{\mathbb{N}}}$ be a sequence of positive
real numbers such that $\lim_{n \to \infty} \frac{r_n}{\sqrt{k_n}} = 0$. Let  $$a_n=\frac{-3}{2\theta^{*}}\log(n)+\frac{\log(b_n)}{\theta^{*}}.$$ 
For all $f \in \mathcal{C}^{l,+}_b$ we have
		 
		$$\mathbb{E}\left(f(\bar{T}_{k_n}-a_n+x)e^{-\theta^{*}\bar{T}_{k_n}}\mathbf{1}_{\{\bar{T}_{k}\leq\bar{F}_n(k)-x,k\leq k_n\}}\right)=\frac{e^{\theta^{*}x}}{\sqrt{2
		\pi}}\int_{-\infty}^{0} f(y)e^{-\theta^{*}y}dy\left(R(\frac{-x}{\sqrt{b_n}})+o(1)\right).$$
		uniformly in $x \in{[-r_n, 0]}.$
	\end{lemma}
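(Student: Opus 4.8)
The plan is to transport the $e^{-\theta^{*}\bar T_{k_n}}$-weighted expectation onto the tilted walk $(\bar T_k)$, recast the barrier event as ``the walk stays below a line while being pinned near that line at both ends'', and then evaluate the resulting probability by a renewal-type local limit theorem; the renewal function $L$ will enter through the two endpoints, which is exactly where the function $R$ comes from.

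\emph{Reductions.} Set $z=\bar T_{k_n}-a_n+x$ and $h(z)=e^{-\theta^{*}z}f(z)\ge 0$. Since $e^{-\theta^{*}a_n}=n^{3/2}/b_n$, the left-hand side equals
$$e^{\theta^{*}x}\,\frac{n^{3/2}}{b_n}\,\mathbb{E}\!\left(h(\bar T_{k_n}-a_n+x)\,\mathbf{1}_{\{\bar T_k\le\tilde F_n(k)-x,\ k\le k_n\}}\right).$$
Because the barrier event includes the index $k=k_n$, it forces $\bar T_{k_n}-a_n+x\le 0$, so only $z\le 0$ contributes; as $f$ (hence $h$) has support bounded on the left, the effective range of $z$ is a fixed compact subset of $(-\infty,0]$. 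It therefore suffices to prove, uniformly in $x\in[-r_n,0]$,
$$\mathbb{E}\!\left(h(\bar T_{k_n}-a_n+x)\,\mathbf{1}_{\{\bar T_k\le\tilde F_n(k)-x,\ k\le k_n\}}\right)=\frac{b_n}{n^{3/2}}\,\frac{1}{\sqrt{2\pi}}\int_{-\infty}^{0}h(y)\,dy\,\bigl(R(-x/\sqrt{b_n})+o(1)\bigr),$$
which will identify $R$ as a fixed multiple of $L$. It is convenient to work with $B_n(k)=\bar T_k/\sqrt{b_n}$, a standard Gaussian walk under $(\mathbf{H_1})$, and with the walk $\tilde S_k:=\bar T_k-\tfrac{k}{k_n}a_n+x$, which has a small positive drift $-a_n/k_n$ and for which the event reads $\{\tilde S_k\le 0,\ k\le k_n\}$ with $\tilde S_0=x$, $\tilde S_{k_n}=z$; thus the walk starts at distance $-x$ and ends at distance $-z$ below the ceiling $0$.

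\emph{Window decomposition.} Fix $\ell_n\to\infty$ with $\ell_n=o(k_n)$ and condition on $\bar T_{\ell_n}$ and $\bar T_{k_n-\ell_n}$, splitting the trajectory into an initial window $[0,\ell_n]$, a bulk $[\ell_n,k_n-\ell_n]$, and a terminal window $[k_n-\ell_n,k_n]$. On the initial window the walk starts at distance $-x$ below the (linear) barrier; by the harmonicity \eqref{mhhh} of the renewal function together with ballot-type control in the spirit of Lemmas \ref{MA} and \ref{10}, this window contributes the factor $L(-x/\sqrt{b_n})$ times a density describing where $\bar T_{\ell_n}$ lands, up to $1+o(1)$ and uniformly in $x$ (since $r_n=o(\sqrt{k_n})$ all the required smallness is uniform, and \eqref{renewl} bounds $L$). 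On the terminal window, reversing time, the walk ends at $z$ with $-z/\sqrt{b_n}\to 0$ uniformly over the compact $z$-range (this is where $b_n\to\infty$ is used), and since $L$ is continuous with $L(0)=1$ (no atoms in case $(\mathbf{H_1})$) and $L_-=L$ by symmetry, this window contributes $L_-(-z/\sqrt{b_n})=1+o(1)$, with \emph{no} $z$-dependence at leading order; this is precisely why the final answer carries $\int_{-\infty}^{0}h(y)\,dy$ rather than a $z$-weighted integral. On the bulk, both endpoints $\bar T_{\ell_n},\bar T_{k_n-\ell_n}$ sit a mesoscopic distance below the barrier (a distance $\gg\sqrt{b_n}$ but $\ll\sqrt{k_n b_n}$, and the barrier slope is negligible at the diffusive scale), so the bulk probability is governed by the Gaussian local limit theorem exactly as in Lemma \ref{Stone}, producing the normalization $1/\sqrt{2\pi\sigma^{2}k_n b_n}$ (with $\sigma^{2}=1$ here) and the polynomial $k_n^{-1}$-type factor; a scaling check gives $\frac{1}{\sqrt{b_n}\,k_n^{3/2}}=\frac{b_n}{n^{3/2}}$, matching the target.

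\emph{Assembly and uniformity.} Multiplying the three contributions, using \eqref{renewal} with the Gaussian constant $C_1=1/\sqrt{\pi}$ to fix the numerical prefactor, and integrating against $h$ over the compact $z$-range yields the displayed asymptotics with $R(u)=c\,L(u)$ for an explicit constant $c$. Uniformity in $x\in[-r_n,0]$ is obtained throughout by monotonicity in $x$, by sandwiching $h$ between Riemann-type upper and lower step approximations exactly as in the proof of Lemma \ref{Stone}, and by the crude bounds of Lemmas \ref{4}, \ref{10} and \eqref{renewl} to control the error terms in each window. I expect the main obstacle to be the clean extraction of the renewal functions from the two windows, i.e.\ establishing the renewal-type local limit estimate for a Gaussian walk conditioned to stay below a linear, $n$-dependent barrier with the correct uniformity in $x$, and in particular verifying that the terminal window contributes $1+o(1)$ rather than a $z$-dependent factor; the growth of both $k_n$ and $b_n$ and the continuity of $L$ are exactly what make this step go through.
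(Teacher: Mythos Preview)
Your plan is in the right spirit but differs from the paper's argument, and the bulk step is not cleanly justified as written.

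The paper does not use a three-window decomposition with short windows $[0,\ell_n]$ and $[k_n-\ell_n,k_n]$ at the ends. Instead it (i) removes the linear drift $a_n/k_n$ by an explicit Girsanov tilt, (ii) splits once at the midpoint $p=[k_n/2]$, (iii) time-reverses the second half and decomposes it over the first hitting time of the running maximum, (iv) applies Caravenna's local limit theorem for walks conditioned to stay positive to evaluate the resulting pieces, and (v) uses convergence of the conditioned and rescaled walk to the Rayleigh law to integrate out and fix the constant $\frac{1}{\sqrt{2\pi}}$. The renewal function $L$ enters through \eqref{renewal} applied to the first half, and the key collapse of the terminal contribution appears explicitly as $\frac{1}{\sqrt{b_n}}\int_{-a}^{0}L(-t/\sqrt{b_n})\,dt=\frac{a}{\sqrt{b_n}}(1+o(1))$ by dominated convergence and $L(0)=1$; this is exactly your ``terminal window contributes $1+o(1)$'' observation, which you have identified correctly as the place where $b_n\to\infty$ is used.

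Your three-window scheme can be pushed through, but the bulk step is the weak point. You say the bulk is ``governed by the Gaussian local limit theorem exactly as in Lemma~\ref{Stone}'' and then append a ``polynomial $k_n^{-1}$-type factor'' without derivation; these two assertions are in tension, since Lemma~\ref{Stone} has no barrier. In fact the endpoints $\bar T_{\ell_n},\bar T_{k_n-\ell_n}$ typically sit at distance of order $\sqrt{\ell_n b_n}\ll\sqrt{k_n b_n}$ below the line, so the barrier on the bulk is \emph{not} negligible: conditioning a length-$k_n$ bridge with both ends at distance $y_1,y_2$ below the ceiling to stay below costs a factor of order $|y_1||y_2|/(k_n b_n)$, and this has to be integrated against the Rayleigh-type endpoint densities produced by the two short windows. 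Carrying this out cleanly is essentially equivalent to (and requires) a renewal local limit theorem of Caravenna type, which is precisely the external input the paper invokes. So both routes need the same sharp ballot/local-limit ingredient; the paper's midpoint-plus-maximum decomposition makes that dependence explicit, whereas your sketch hides it inside the bulk step.
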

	
	\begin{proof}
			By setting $h(z)=e^{-\theta^* z}f(z)$
		it is enough to prove that 
		\begin{align}
		\label{function estimate}
		&\mathbb{E}\left(h(\bar{T}_{k_n}-a_n+x)\mathbf{1}_{\{\bar{T}_{k}\leq\bar{F}_n(k)-x,k\leq k_n\}}\right)=
		\frac{1}{k_n^{3/2}\sqrt{2
		\pi b_n}}\int_{-\infty}^{0} h(y)dy(	R(\frac{-x}{\sqrt{b_n}})+o(1))
		\end{align}
		uniformly in $x\in{[-r_n,0]}$.
	
			Following the same method used in Lemma \ref{Stone} it is enough to prove this estimate for an indicator function.
	By writing $\mathbf{1}_{[-a,-b]}=\mathbf{1}_{[-a,0]}-\mathbf{1}_{[-b,0]}$ for some $a>0,b>0$, it is enough to prove this estimate for 
		  $h(z) =\mathbf{1}_{[-a,0]}(z)$, in that case we have
		  \begin{align*}
		&\mathbb{E}\left(h(\bar{T}_{k_n}-a_n+x)\mathbf{1}_{\{\bar{T}_{k}\leq \bar{F}_n(k)-x,k\leq k_n\}}\right)=\mathbb{P}\left(\bar{T}_{k_n}-a_n+x\geq -a, \bar{T}_{k}\leq F_n(k)-x,k\leq k_n\right).
		\end{align*}
			Define a new probability measure $\mathbb{Q}$ on $\mathbb{R}$ by 
			\begin{align}
			\label{change}
			  &  \frac{d\mathbb{P}}{d\mathbb{Q}}(\bar{T})=\exp(\frac{-a_n}{n}\bar{T}+\Lambda(\frac{a_n}{n}))
			\end{align}
	where $\Lambda(\theta)=\frac{\theta^2}{2}.$ 
	      Then we rewrite 
		\begin{align*}
		&\mathbb{P}\left(\bar{T}_{k_n}-a_n+x\geq -a, \bar{T}_{k}\leq F_n(k)-x,k\leq k_n\right)\\&=	\mathbb{E}_{Q}\left(e^{\frac{-a_n}{n}(\sqrt{b_n}\hat{B}_{n}(k_n)-\frac{a_n}{2n})}\mathbf{1}_{\{\sqrt{b_n}\hat{B}_{n}(k_n)+x\geq -a,\sqrt{b_n}\hat{B}_{n}(k)\leq -x,k\leq k_n\}}\right),
		\end{align*}
		where $\hat{B}_n(k)=B_{n}(k)-\frac{k}{\sqrt{b_n}k_n}a_n$.
	     Observe that 
	     the law of $\hat{T}$ under $\mathbb{Q}$ is the same as
	     the law of $\bar{T}$ under $\mathbb{P}$.
	     
		 Under this change of measure, we can rewrite the probability as  
		\begin{align*}
		&\mathbb{E}_{Q}\left(e^{-\frac{a_n}{n}\sqrt{b_n}\hat{B}_{n}(k_n)+\frac{a_n^2}{2n}}\mathbf{1}_{\left\{\sqrt{b_n}\hat{B}_{n}(k)\leq-x,k\leq k_n,\sqrt{b_n}\hat{B}_n(k_n)+x\geq -a\right\}}\right)\\&\leq e^{\frac{a_n}{n}(x+a)+\frac{a_n^2}{2n}} \mathbb{Q}\left(\sqrt{b_n}\hat{B}_n(k)\leq-x,k\leq k_n,\sqrt{b_n}\hat{B}_n(k_n)\geq -a-x\right).
		\end{align*}
		as a consequence   
		\begin{align*}
		    &\limsup_{n\to \infty}\sup_{x\in{[-r_n,0]}}\mathbb{E}_{Q}\left(e^{-\frac{a_n}{n}\sqrt{b_n}\hat{B}_{n}(k_n)+\frac{a_n^2}{2n}}\mathbf{1}_{\left\{\sqrt{b_n}\hat{B}_{n}(k)\leq-x,k\leq k_n,\sqrt{b_n}\hat{B}_{n}(k_n)+x\geq -a\right\}}\right)\\&\leq \limsup_{n\to \infty}\sup_{x\in{[-r_n,0]}} \mathbb{Q}\left(\sqrt{b_n}\hat{B}_{n}(k)\leq-x,k\leq k_n,\sqrt{b_n}\hat{B}_{n}(k)\geq -a-x\right),
		\end{align*}
	similarly we have
		\begin{align}
		    &\nonumber\liminf_{n\to \infty}\inf_{x\in{[-r_n,0]}}\mathbb{E}_{Q}\left(e^{-\frac{a_n}{n}\sqrt{b_n}\hat{B}_{n}(k_n)+\frac{a_n^2}{2n}}\mathbf{1}_{\left\{\sqrt{b_n}\hat{B}_{n}(k)\leq-x,k\leq k_n,\sqrt{b_n}\hat{B}_{n}(k_n)+x\geq -a\right\}}\right)\\&\label{eqq}\leq \liminf_{n\to \infty}\inf_{x\in{[-r_n,0]}} \mathbb{Q}\left(\sqrt{b_n}\hat{B}_{n}(k)\leq-x,k\leq k_n,\sqrt{b_n}\hat{B}_{n}(k_n)\geq -a-x\right).
		    \end{align}
		for all $a>0$. Therefore, it remains to estimate the quantity \eqref{eqq}.
		Applying the Markov property at time $p=[\frac{k_n}{2}]$ we get 
		\begin{align}
		\label{5006}
		&\mathbb{Q}\left(\sqrt{b_n}\hat{B}_{n}(k)\leq-x,k\leq k_n,\sqrt{b_n}\hat{B}_{n}(k_n)\geq -a-x\right)=\mathbb{E}\left( f_{x,n,a}(\sqrt{b_n}\hat{B}_{n}(p))\mathbf{1}_{\left\{\sqrt{b_n}\hat{B}_n(k)\leq -x,k\leq p\right\}}\right)  
		\end{align}
		where for all $y\leq 0$
		\begin{align*}
		&f_{x,n,a}(y)=\mathbb{Q}\left(\sqrt{b_n}\hat{B}_{n}(k_n-p)+y\geq -a-x ,\sqrt{b_n}\hat{B}_n(k)+y\leq -x, k\leq k_n-p\right).
		\end{align*}
		Using that the process $(\sqrt{b_n}(\hat{B}_{n}(k_n-p)-\hat{B}_n(k_n-p-j)),0\leq j\leq k_n-p)$ has the same law as $(\sqrt{b_n}\hat{B}_n(j),0\leq j\leq k_n-p)$ under $\mathbb{Q}$, we obtain 
		\begin{align*}
		&f_{x,n,a}(y)=\mathbb{Q}\left((-\sqrt{b_n}\hat{B}_n(k)) \leq (-\sqrt{b_n}\hat{B}_{k_n-p})-(x+y)\leq a, k\leq k_n-p\right)\\&=\mathbb{Q}\left(\sqrt{b_n}\hat{B}_n(k) \leq \sqrt{b_n}\hat{B}_n(k_n-p)-(x+y)\leq a, k\leq k_n-p\right)
		\end{align*}
		since $(\sqrt{b_n}\hat{B}_n(k))_{k\geq 0}$ is a symmetric law. We write $\check{B}_n(k_n-p)=\max_{0\leq j\leq k_n-p}\sqrt{b_n}\hat{B}_n(i),$ set
		 $$\tau_{k_n-p}=\min{\left\{i:\hspace{0.2cm}
			0\leq i\leq k_n-p,\hspace{0.2cm} \check{B}_n(k_n-p)=\sqrt{b_n}\hat{B}_n(i)\right\}}$$
 the first time when $\sqrt{b_n}\hat{B}_n(i)$ hits its maximum in the interval $[0,k_n-p].$
		We have 
		\begin{align*}
		&f_{x,n,a}(y)=\sum_{i=0}^{k_n-p}\mathbb{Q}\left(\tau_{k_n-p}=i,\sqrt{b_n}\hat{B}_n(k) \leq \sqrt{b_n}\hat{B}_n(k_n-p)-(x+y)\leq a, k\leq k_n-p \right).
		\end{align*}
		Applying the Markov property at time $i$ we get
		\begin{align*}
		&  f_{x,n,a}(y)=\sum_{i=0}^{k_n-p}\mathbb{E}\left( g_{x,n,y}\left(\check{B}_{n}(i)-a\right)\mathbf{1}_{\left\{\check{B}_n(i)=\sqrt{b_n}\hat{B}_n(i)\leq a \right\}}
		\right), 
		\end{align*}
		where for all $z\leq0$, 
		$g_{x,n,y}(z)=\mathbb{Q}\left(y+x\leq \sqrt{b_n}\hat{B}_n(k_n-p-i)\leq y+x-z,\check{B}_n(k_n-p-i)\leq 0\right)$.
		
		We now split the sum $\sum_{i=0}^{k_n-p}$ into $\sum_{i=0}^{i_n}+\sum_{i=i_n+1}^{k_n-p}$, where $i_n=[\sqrt{k_n}]$, then we write 
		\begin{align*}
		&f_{n,x,a}(y)=   f^{(1)}_{n,x,a}(y)+ f^{(2)}_{n,x,a}(y)
		\end{align*}
		where 
		$$ f^{(1)}_{n,x,a}(y)=\sum_{i=0}^{i_n}\mathbb{E}\left( g_{x,n,y}(\check{B}_n(i)-a)\mathbf{1}_{\left\{\check{B}_n(i)=\sqrt{b_n}\hat{B}_n(i)\leq a\right\}}\right),$$ and $$ f^{(2)}_{n,x,a}(y)=\sum_{i=i_n+1}^{k_n-p}\mathbb{E}\left( g_{x,n,y}(\check{B}_{n}(i)-a)\mathbf{1}_{\left\{\check{B}_n(i)=\sqrt{b_n}\hat{B}_n(i)\leq a\right\}}\right).$$
		Set $\phi(x):=xe^{\frac{-x^2}{2}}\mathbf{1}_{\left\{x\geq 0\right\}}$. By Theorem  $1$ in \cite{caravenna2005local} of Caravenna for $n \to \infty$,
		\begin{align*}
		& \mathbb{Q}\left(-(x+y-z)\leq \sqrt{b_n}\hat{B}_n(k_n-p-i)\leq -(x+y)|\sqrt{b_n}\hat{B}_n(j)\geq 0,j\leq k_n-p-i\right)\\&=\frac{-z}{\sqrt{(k_n-p)b_n}}\phi\left(\frac{-y}{\sqrt{(k_n-p)b_n}}\right)+o\left(\frac{1}{\sqrt{(k_n-p)b_n}}\right),
		\end{align*}
		uniformly in $y\leq 0$, $x\in{[-r_n,0]}$ and $z$ in any compact set of $\mathbb{R}_-$. As a consequence by \eqref{renewal} we get 
		\begin{align*}
		&g_{x,n,y}(z)=\frac{-z}{(k_n-p)\sqrt{b_n\pi}}\phi\left(\frac{-y}{\sqrt{(k_n-p)b_n}}\right)+o(\frac{1}{(k_n-p)\sqrt{b_n}}),
		\end{align*}
		uniformly in $y\leq 0$, $x\in{[-r_n,0]}$ and $z\in{[-a,0]}$.
		For $n$ large enough we get
		\begin{align}
		\label{zzz}
		&f^{(1)}_{n,x,a}(y)=\frac{1}{(k_n-p)\sqrt{b_n\pi}}\phi\left(\frac{-y}{\sqrt{(k_n-p)b_n}}\right)\sum_{i=0}^{i_n}\mathbb{E}\left(-(\hat{B}_n(i)-\frac{a}{\sqrt{b_n}})\mathbf{1}_{\left\{\hat{B}_n(k)\leq \frac{a}{\sqrt{b_n}},k\leq i \right\}}\right)\\&\nonumber+o(\frac{1}{k_n\sqrt{b_n}})\sum_{i=0}^{i_n}\mathbb{Q}\left(\hat{B}_n(k)\leq \frac{a}{\sqrt{b_n}}, k\leq i\right).
		\end{align}
		We  now treat the quantity 
		\begin{align*}
		&\mathbb{E}\left( f^{(2)}_{x,n,a}(\sqrt{b_n}\hat{B}_n(k_n-p))\mathbf{1}_{\left\{\sqrt{b_n}\hat{B}_n(k)\leq -x ,k\leq k_n-p\right\}}\right).
		\end{align*}
		Since $\phi$ is bounded, there exists a constant $C>0$ such that for all $x\in{[-r_n,0]},
		z\in{\mathbb{R}}$ and $0\leq i\leq p$ 
		\begin{align*}
		&g_{x,n,y}(z)\leq \frac{C}{\sqrt{b_n}(k_n-p-i+1)}\mathbf{1}_{\left\{-a \leq z\leq 0\right\}},
		\end{align*}
		as a consequence, for all $y\leq 0$ we have
		\begin{align*}
		&  f^{(2)}_{x,n,a}(y)\leq \frac{C}{\sqrt{b_n}} \sum_{i=i_n+1}^{k_n-p}\frac{1}{k_n-p-i+1}\mathbb{P}\left(\check{B}_n(i)\leq \frac{a}{\sqrt{b_n}},\hat{B}_n(i)\geq 0 \right)  
		\end{align*}
		which is bounded using Lemma \ref{10} by
		\begin{align}
		\label{equation1500}
		&f^{(2)}_{x,n,a}(y)\leq  \frac{C}{\sqrt{b_n}} \sum_{i=i_n+1}^{k_n-p}\frac{1}{(k_n-p-i+1)i^{\frac{3}{2}}}=o(\frac{1}{k_n\sqrt{b_n}}).
		\end{align}
		On the other hand  by \eqref{renewal} we have  $\mathbb{Q}(\hat{B}_n(j)\leq\frac{-x}{\sqrt{b_n}}, j\leq k_n-p)\sim_{n\to \infty}\frac{\sqrt{2}}{\sqrt{\pi}}\frac{L(\frac{-x}{\sqrt{b_n}})}{\sqrt{k_n}}$ by
		 \eqref{renewal}, which implies together with equation \eqref{equation1500} 
		
			\begin{align}
			\label{medd}
		&  (L(\frac{-x}{\sqrt{b_n}}))^{-1} \mathbb{E}\left( f^{(2)}_{x,n,a}(\hat{B}_n(k_n-p))\mathbf{1}_{\left\{\sqrt{b_n}\hat{B}_n(k)\leq -x ,k\leq k_n-p\right\}}\right)=o(\frac{1}{k_n^{\frac{3}{2}}\sqrt{b_n}}).
		\end{align}
		We now return to equation \eqref{zzz}. Letting $n \to \infty$,
		we have  $\sum_{i=0}^{\infty}\mathbb{P}\left(\frac{\hat{T}_i}{\sqrt{b_n}}\geq \frac{-a}{\sqrt{b_n}} \right)=R(\frac{a}{\sqrt{b_n}})$ for $a>0$ and by Fubini's theorem we have 
		\begin{align*}
		&  \sum_{i=0}^{\infty}\mathbb{E}\left((\hat{B}_n(i)+\frac{a}{\sqrt{b_n}})\mathbf{1}_{\left\{\hat{B}_n(k)\geq \frac{-a}{\sqrt{b_n}}, k\leq i \right\}}\right)=\int_{0}^{\frac{a}{\sqrt{b_n}}}L(t) dt=\frac{1}{\sqrt{b_n}}\int_{-a}^{0} L(\frac{-t}{\sqrt{b_n}})dt.
	\end{align*}
	 	By dominated convergence theorem
		we have
		\begin{align*}
		&\frac{1}{\sqrt{b_n}}\int_{-a}^{0} L(\frac{-t}{\sqrt{b_n}})dt=\frac{a}{\sqrt{b_n}}(1+o(1))=\frac{1}{\sqrt{b_n}}\int h(t) dt(1+o(1).
		\end{align*}
		since $h(t) =\mathbf{1}_{[-a,0]}(t)$.
		This yields, for all $y\leq 0$
		\begin{align*}
		& f^{(1)}_{n,x,a}(y)=\frac{1}{k_n\sqrt{b_n\pi}}\phi(\frac{-y}{\sqrt{(k_n-p)b_n}}) \int h(t )dt+o(\frac{1}{k_n\sqrt{b_n}}).
		\end{align*}
		as a consequence, by \eqref{renewal} 
		\begin{align*}
		&\mathbb{E}\left( f^{(1)}_{x,n,a}(\sqrt{b_n}\hat{B}_n(k_n-p))\mathbf{1}_{\left\{\hat{B}_n(k)\leq \frac{-x}{\sqrt{b_n}},k\leq k_n-p\right\}}\right)=\frac{2\sqrt{2}}{\pi k_n^{\frac{3}{2}}\sqrt{b_n}} \int h(t)dt\\&\times  \mathbb{E}_{\frac{-x}{\sqrt{b_n}}}\left(\phi\left(\frac{-\hat{B}_n(k_n-p)+\frac{x}{\sqrt{b_n}}}{\sqrt{k_n-p}}\right)\big\rvert -\hat{B}_n(k)\geq 0, k\leq k_n-p\right) \left(L(\frac{-x}{\sqrt{b_n}})+o(1)\right).
		\end{align*}
		On the other hand, it's known (see Lemma 2.2 in \cite{zbMATH05935920}) that under $\mathbb{P}_y\left(.\lvert\frac{\hat{T}_k}{\sqrt{b_n}}\geq 0, k\leq k_n-p\right)$, $\frac{\hat{B}_n(k_n)}{\sqrt{k_n}}$ converges weakly (as $n\to \infty$) to the Rayleigh distribution with density $\phi$. Hence
		\begin{align*}
		& \lim_{n\to \infty} \mathbb{E}_{\frac{-x}{\sqrt{b_n}}}\left(\phi\left(\frac{\hat{      B}_n(k_n-p)+\frac{x}{\sqrt{b_n}}}{\sqrt{k_n-p}}\right)\big\vert \hat{B}_n(k)\geq 0, k\leq k_n-p\right)=\int_{0}^{\infty} \phi(t)^2 dt=  \frac{\sqrt{\pi}}{4}
		\end{align*}
		uniformly in $x\in[-r_n,0]$ .
		Combining this with \eqref{medd} we conclude that \eqref{function estimate} holds, which allows us to complete the proof by successive approximations.
		\end{proof}
\subsubsection{KMT coupling for random walk} 
 We now introduce the well-known KMT Theorem  \cite{zbMATH03481611} which is an approximation
 method of a random walk satisfying $(\mathbf{H2})$ by a Gaussian random walk. It allows us
 to link estimates on random walks satisfying $(\mathbf{H2})$ to the ones previously
 proved under assumption $(\mathbf{H1})$.

\begin{theorem}[Komlos-Major-Tusnàdy]
	\label{Komlos}
	Let $(X_i)_{1\leq i\leq n}$ be a sequence of i.i.d random variables such that $\mathbb{E}(X_i))=0$, $0<\mathbb{E}(X_i^2)=\sigma^2<\infty$ and $\mathbb{E}(\exp(\theta|X_i|))<\infty$ for some $\theta>0$. Then there exists a  sequence of i.i.d standard normal variables $(Z_i)_{1\leq i\leq n}$ such that for all $y\geq 0$
	\begin{align*}
	&\mathbb{P}\left( \max_{1\leq k\leq n}|\sigma^{-1}\sum_{i=1}^{k}X_i-\sum_{i=1}^{k}Z_i|>C\log(n)+y\right)\leq K \exp{(-\lambda y)}
	\end{align*} 
	where $C, K$ and $\lambda$ are universal positive constants.
	\end{theorem}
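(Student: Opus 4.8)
The statement is the classical Hungarian (KMT) strong approximation, so the plan is to reproduce the \emph{dyadic coupling construction} rather than to invent anything new. Write $\tilde X_i = \sigma^{-1}X_i$, so the $\tilde X_i$ are i.i.d., centred, of unit variance, with a finite exponential moment, and the task is to build i.i.d.\ standard Gaussians $(Z_i)$ on a common probability space so that $\max_{k\le n}\bigl|\sum_{i\le k}\tilde X_i-\sum_{i\le k}Z_i\bigr|$ has the advertised tail. After padding the sequence with extra independent copies, it is enough to handle $n=2^K$.

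The engine of the proof is a one-step \textbf{conditional quantile coupling}. Let $W=\tilde X_1+\dots+\tilde X_{2m}$ and $V=\tilde X_1+\dots+\tilde X_m$. Conditionally on $W$ one couples $V$ with a variable $V^\star$ whose conditional law given $W$ is that of the midpoint of a Gaussian bridge (i.e.\ $\mathcal{N}(W/2,m/2)$), in such a way that
\[
\bigl|V-V^\star\bigr|\le C_1+C_1\frac{W^2}{m}+t\qquad\text{with probability}\ \ge 1-K_1 e^{-\lambda_1 t},
\]
uniformly over the relevant range of $W$. This is the genuinely hard, ``Hungarian'' ingredient: one writes an Edgeworth-type expansion for the conditional density of $V$ given $\{W=w\}$, valid uniformly for $|w|\le c\,m$ (the exponential-moment hypothesis is what makes the expansion legitimate, controls its remainder, and supplies the exponential tail once one leaves the bulk $|w|\le c\,m$), and then uses the elementary fact that two real random variables whose distribution functions are uniformly $\varepsilon$-close can be coupled with difference of order $\varepsilon$ times the local reciprocal density.

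With this lemma one builds $(Z_i)$ top-down: first couple $S_n:=\sum_{i\le n}\tilde X_i$ with a $\mathcal{N}(0,n)$ variable; then, conditionally on $S_n$, apply the one-step coupling to produce $S_{n/2}$ together with its Gaussian partner; recurse on each dyadic half, down to blocks of length $1$, at which point all partial sums $\sum_{i\le k}Z_i$ at dyadic times are defined and the remaining $Z_i$ are filled in from the known joint Gaussian law. For a dyadic time $k$ the error $\sigma^{-1}S_k-\sum_{i\le k}Z_i$ telescopes into at most $K=\log_2 n$ one-step errors, one per level $\ell$; at level $\ell$ the block length is $2^{K-\ell}$, the conditioning variable is typically of size $O(2^{(K-\ell)/2})$, so each one-step error is $O(1)$ plus an exponentially-tailed fluctuation, and the errors attached to distinct blocks of a common level are conditionally independent.

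The final accounting is then: sum the $\le K$ one-step errors along the path to $k$ (mean $O(K)=O(\log n)$, fluctuation $O(\sqrt K)$ after the exponential tails) and take a union bound over the at most $n$ dyadic times; the exponential tails survive the union bound at the cost of an extra additive $O(\log n)$, giving $\max_{k\le n}\bigl|\sigma^{-1}S_k-\sum_{i\le k}Z_i\bigr|\le C\log n+y$ with probability $\ge 1-Ke^{-\lambda y}$, and interpolation between consecutive dyadic times costs only the maximal fluctuation of a bounded block of i.i.d.\ variables, which is absorbed into the same bound. I expect essentially all the work to sit in the conditional quantile coupling lemma — the uniform conditional Edgeworth expansion and its conversion into a coupling with exponential tails — while the dyadic bookkeeping, though it is where the sharp $O(\log n)$ (rather than $O((\log n)^2)$) is actually won, is comparatively routine once the per-level error is known to be $O(1)$ with an exponential tail. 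For the purposes of the present paper one may of course simply invoke the theorem as stated in \cite{zbMATH03481611}.
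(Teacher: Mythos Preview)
The paper does not prove this theorem at all: it is stated as the classical KMT strong approximation with a citation to \cite{zbMATH03481611} and then used as a black box in the subsequent lemmas (Lemmas~\ref{MAbis}, \ref{7bis} and Corollary~\ref{stonebis}). Your own final sentence already anticipates this --- ``for the purposes of the present paper one may of course simply invoke the theorem'' --- and that is exactly what the paper does.

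Your sketch of the dyadic quantile-coupling construction is a faithful outline of the original Koml\'os--Major--Tusn\'ady argument, so nothing is wrong with it as mathematics; it is simply far more than is called for here. If you were writing this up, the appropriate ``proof'' is a one-line reference, as in the paper.
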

Observe that we can construct an increasing sequence of integers $d_n$ such that
		$d_n/\log n \to \infty$ and $d_n^2/b_n\to 0$. Using assumption \eqref{equation 6}, we can apply Theorem \ref{Komlos}, to note
		that with high probability,  the random walk $(\bar{T}_k)_{k\leq k_n}$   stays within distance $d_n$ from  a
		random walk $(\hat{S}_k)_{0\leq k\leq k_n}$ with Gaussian steps. More precisely, setting the event
		$\mathcal{W}_n=\left\{|\bar{T_k}-\sqrt{b_n}\hat{S}_k|\leq d_n,k\leq k_n \right\}$   we have
		$$\mathbb{P}(\mathcal{W}_n^c)\leq e^{-\lambda(d_n - C \log n)} = o(n^{-\gamma})$$
		for all $\gamma > 0$, as $d_n \gg \log n$.
		We start by proving a version of Lemma \ref{MA} for the random walk satisfying $(\mathbf{H_2})$.
	\begin{lemma}
	\label{MAbis}
	Fix $\epsilon>0$, there exists $C>0$ such that  $$ \mathbb{P}(\bar{T}_k \geq -(k^{1/2-\epsilon}+y),k\leq k_n)\leq C\frac{1+\frac{y}{\sqrt{b_n}}}{\sqrt{k_n}}$$ for any $y>0$.
\end{lemma}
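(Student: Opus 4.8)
The plan is to transfer the Gaussian estimate of Lemma \ref{MA} to the random walk $(\bar{T}_k)_{k\leq k_n}$ satisfying $(\mathbf{H_2})$ by means of the KMT coupling described just above the statement. First I would fix the increasing integer sequence $d_n$ with $d_n/\log n\to\infty$ and $d_n^2/b_n\to 0$, together with the event $\mathcal{W}_n=\{|\bar T_k-\sqrt{b_n}\hat S_k|\leq d_n,\ k\leq k_n\}$, where $(\hat S_k)_{k\leq k_n}$ is the coupled random walk with standard Gaussian steps; recall $\mathbb{P}(\mathcal{W}_n^c)=o(n^{-\gamma})$ for every $\gamma>0$. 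Then I would split
\begin{align*}
\mathbb{P}\left(\bar T_k\geq -(k^{1/2-\epsilon}+y),\ k\leq k_n\right)\leq \mathbb{P}(\mathcal{W}_n^c)+\mathbb{P}\left(\mathcal{W}_n,\ \bar T_k\geq -(k^{1/2-\epsilon}+y),\ k\leq k_n\right),
\end{align*}
and on the event $\mathcal{W}_n$ the condition $\bar T_k\geq -(k^{1/2-\epsilon}+y)$ forces $\sqrt{b_n}\hat S_k\geq -(k^{1/2-\epsilon}+y)-d_n$, i.e. $\hat S_k\geq -\big(k^{1/2-\epsilon}+y+d_n\big)/\sqrt{b_n}$ for all $k\leq k_n$.

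Next I would absorb the shift $d_n/\sqrt{b_n}$ into the exponent of $k$. Since $d_n=o(\sqrt{b_n})$ and $b_n\leq n$, for $n$ large the deterministic term $d_n/\sqrt{b_n}$ is bounded, and in any case $k^{1/2-\epsilon}+d_n/\sqrt{b_n}\leq 2k^{1/2-\epsilon'}$ for a slightly smaller $\epsilon'>0$ and all $1\leq k\leq k_n$ (using that $d_n/\sqrt{b_n}$ is, say, eventually $\leq k_n^{1/2-\epsilon'}$ — this is where the hypothesis $b_n/(\log n)^2\to\infty$ under $(\mathbf{H_2})$ is used, guaranteeing $d_n/\sqrt{b_n}$ grows slower than any positive power of $k_n$). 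This reduces the problem to bounding $\mathbb{P}\big(\hat S_k\geq -(2k^{1/2-\epsilon'}+y/\sqrt{b_n}),\ k\leq k_n\big)$ for a Gaussian random walk.

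Finally I would apply Lemma \ref{MA} to the centred finite-variance random walk $(\hat S_k)_{k\leq k_n}$ — after rescaling the boundary by the constant factor $2$, which only changes constants — with parameter $y'=y/\sqrt{b_n}$, obtaining the bound $C(1+y/\sqrt{b_n})/\sqrt{k_n}$. Adding the negligible term $\mathbb{P}(\mathcal{W}_n^c)=o(n^{-\gamma})$, which is $o(1/\sqrt{k_n})$ since $k_n\leq n$, gives the claimed estimate $C(1+y/\sqrt{b_n})/\sqrt{k_n}$, uniformly in $y>0$. The only slightly delicate point — the main obstacle — is the bookkeeping in the second step: one must check that replacing the barrier $k^{1/2-\epsilon}+y+d_n$ by something of the form $\mathrm{const}\cdot k^{1/2-\epsilon'}+y$ is legitimate for all $k$ from $1$ to $k_n$ simultaneously, and that the resulting $\epsilon'$ is still strictly positive; this is exactly where one invokes $d_n^2/b_n\to 0$ together with $d_n/\log n\to\infty$ and the assumption $b_n\gg(\log n)^2$.
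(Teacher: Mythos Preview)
Your proposal is correct and follows exactly the paper's approach: split off $\mathcal{W}_n^c$ via KMT, transfer the barrier event to the Gaussian walk $(\hat S_k)$, and invoke Lemma~\ref{MA}. One simplification: what you call the ``main obstacle'' is not delicate at all --- since $d_n/\sqrt{b_n}\to 0$, for large $n$ one has $d_n/\sqrt{b_n}\leq 1$, and then $\hat S_k\geq -\bigl(k^{1/2-\epsilon}+y\bigr)/\sqrt{b_n}-1$; because $k^{1/2-\epsilon}/\sqrt{b_n}\leq k^{1/2-\epsilon}$, this event is contained in $\{\hat S_k\geq -(k^{1/2-\epsilon}+y/\sqrt{b_n}+1),\ k\leq k_n\}$, to which Lemma~\ref{MA} applies directly with $y'=y/\sqrt{b_n}+1$, yielding $C(2+y/\sqrt{b_n})/\sqrt{k_n}$ --- so there is no need to change $\epsilon$ to a smaller $\epsilon'$ or to worry about uniformity in $k$.
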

\begin{proof}
        Let $\epsilon>0$. The proof is an application  Theorem \ref{Komlos}. We have
		\begin{align*}
	&\mathbb{P}\left(\bar{T}_k \geq -(k^{1/2-\epsilon}-y),k\leq k_n\right)\\
	&\leq \mathbb{P}\left(\bar{T}_k \geq -(k^{1/2-\epsilon}+y),k\leq k_n,\mathcal{W}_n^c,k\leq k_n\right)+\mathbb{P}\left(\sqrt{b_n}\hat{S}_{k}\geq -(k^{1/2-\epsilon}+y)+d_n), k\leq k_n\right).
	\end{align*}
	%We will bound the two quantities in the last equation separately%.
	 Applying Theorem \ref{Komlos}, there exists a constant $C>0$ such that
	\begin{align*}
		&\mathbb{P}\left(\bar{T}_k \geq -(k^{1/2-\epsilon}+y),k\leq k_n\right)\\
	&\leq  \frac{C}{n^{\gamma}}+\mathbb{P}\left(\sqrt{b_n}\hat{S}_{k}\geq -(k^{1/2-\epsilon}+y)+d_n, k\leq k_n\right),
	\end{align*}
 for all $\gamma>0$. Using the fact that $\frac{d_n}{\sqrt{b_n}}\to 0$ as $n\to \infty$, we have for $n$ large enough
 \begin{align*}
     &\mathbb{P}\left(\bar{T}_k \geq -(k^{1/2-\epsilon}+y),k\leq k_n\right)\leq \mathbb{P}\left(\hat{S}_{k}\geq -\frac{1}{\sqrt{b_n}}(k^{1/2-\epsilon}+y)+1  ,k\leq k_n\right)
 \end{align*}
 and by Lemma \ref{MA} we conclude the proof.
\end{proof}

	In the same way we prove a similar result to Lemma \ref{10}.
\begin{lemma}
		\label{7bis}
		Let $\alpha>0$, and for $0\leq k\leq k_n$, write $f_n(k)=\alpha \log \left(\frac{(k_n-k)b_n+1}{k_nb_n}\right)$. There exists $C>0$ such that for all $x \geq 0$,   $a \leq b\in{\mathbb{R}}$, we have for $n$ large enough
		\begin{align*}
		&\mathbb{P}\left(\bar{T}_{k_n}-f_n(k_n)\in{[a,b]}, \bar{T_k}\leq f_n(k)+x, k\leq k_n\right)\leq C(b-a+2d_n)\frac{(1+\frac{x}{\sqrt{b_n}})^{2}}{\sqrt{b_n\sigma^{2}}k_n^{\frac{3}{2}}}.
		\end{align*}
		\end{lemma}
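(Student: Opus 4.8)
The plan is to deduce this estimate from the Gaussian bound of Lemma~\ref{10} by means of the KMT coupling, exactly as Lemma~\ref{MAbis} was deduced from Lemma~\ref{MA}. Recall the coupling event $\mathcal{W}_n=\{|\bar T_k-\sqrt{b_n}\hat S_k|\le d_n,\ k\le k_n\}$, where $(\hat S_k)_{k\le k_n}$ is a centred Gaussian random walk and $\mathbb{P}(\mathcal W_n^c)=o(n^{-\gamma})$ for every $\gamma>0$, with $\log n\ll d_n\ll\sqrt{b_n}$. I would first split, writing $A_n$ for the event on the left-hand side,
\[
\mathbb{P}\big(\bar T_{k_n}-f_n(k_n)\in[a,b],\ \bar T_k\le f_n(k)+x,\ k\le k_n\big)\ \le\ \mathbb{P}(\mathcal W_n^c)+\mathbb{P}\big(A_n\cap\mathcal W_n\big).
\]
Since $k_n\le n$ and $b_n\le n$, the right-hand side of the lemma is of order at least $d_n/n^{2}$, which exceeds $n^{-3}$, whereas $\mathbb{P}(\mathcal W_n^c)$ is super-polynomially small; hence the first term is harmless.

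On $\mathcal W_n$ the barrier constraint $\bar T_k\le f_n(k)+x$ forces $\sqrt{b_n}\hat S_k\le f_n(k)+(x+d_n)$ for all $k\le k_n$, while $\bar T_{k_n}-f_n(k_n)\in[a,b]$ forces $\sqrt{b_n}\hat S_{k_n}-f_n(k_n)\in[a-d_n,b+d_n]$; hence $A_n\cap\mathcal W_n$ is contained in the event
\[
\big\{\ \sqrt{b_n}\hat S_{k_n}-f_n(k_n)\in[a-d_n,b+d_n],\ \ \sqrt{b_n}\hat S_k\le f_n(k)+(x+d_n),\ k\le k_n\ \big\}.
\]
Now $(\hat S_k)$ is a centred Gaussian walk, so the proof of Lemma~\ref{10} applies to it with essentially no change: the only modification is that its steps have variance $\sigma^2$ rather than $1$, which merely inserts a factor $1/\sigma$ in the Gaussian density bound $\mathbb{P}(\sqrt{b_n}\hat S_{k_n}-f_n(k_n)\in[a',b'])\le (b'-a')/\sqrt{k_nb_n\sigma^2}$ and hence in the final estimate. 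Applying this version of Lemma~\ref{10} with $k=k_n$, with the interval $[a-d_n,b+d_n]$ of length $b-a+2d_n$, and with the shift $x+d_n$ in place of $x$ yields the bound
\[
C\,\frac{(b-a+2d_n)\big(1+\tfrac{x+d_n}{\sqrt{b_n}}\big)^2}{\sqrt{b_n\sigma^2}\,k_n^{3/2}}.
\]

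Finally, since $d_n/\sqrt{b_n}\to 0$ we have $1+\tfrac{x+d_n}{\sqrt{b_n}}\le 2\big(1+\tfrac{x}{\sqrt{b_n}}\big)$ for $n$ large, so the squared factor is at most $4\big(1+x/\sqrt{b_n}\big)^2$; absorbing the constant into $C$ gives exactly the claimed inequality. The only point requiring genuine care is the simultaneous bookkeeping of the two distortions introduced by the coupling — the $d_n$-widening of both the target interval and the logarithmic barrier $f_n$ — together with the verification that $\mathbb{P}(\mathcal W_n^c)$ is negligible against the right-hand side; once these are in place, the estimate is a direct transcription of the Gaussian argument of Lemma~\ref{10}.
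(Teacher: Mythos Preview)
Your proof is correct and follows essentially the same approach as the paper: both split on the KMT coupling event $\mathcal W_n$, absorb the $o(n^{-\gamma})$ complement, pass to the Gaussian walk $\sqrt{b_n}\hat S_k$ with the interval widened to $[a-d_n,b+d_n]$ and the barrier raised by $d_n$, and then invoke Lemma~\ref{10}. The only cosmetic difference is that the paper disposes of the extra $d_n$ in the barrier by writing $\tfrac{d_n}{\sqrt{b_n}}\le 2$ before applying Lemma~\ref{10}, whereas you carry $x+d_n$ through and simplify $(1+\tfrac{x+d_n}{\sqrt{b_n}})^2\le 4(1+\tfrac{x}{\sqrt{b_n}})^2$ at the end; both are equivalent.
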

		\begin{proof} 
	Applying  again Theorem \ref{Komlos}, there exists a constant $C>0$ such that
	\begin{align*}
	&\mathbb{P}\left(\bar{T}_{k_n}-f_n(k_n)\in[a,b], \bar{T_k}\leq f_n(k)+x, k\leq k_n\right)\\&\label{p}\leq \frac{C}{n^{\gamma}}+\mathbb{P}\left(\sqrt{b_n}\hat{S}_{k_n}-f_n(k_n)\in[a-d_n,b+d_n], \hat{S}_{k}\leq\frac{1}{\sqrt{b_n}} (f_n(k)+x+d_n),k\leq k_n\right).
	\end{align*}
for all $\gamma>0$.
	 For $n$ large enough we obtain
	 \begin{align*}
	 	&\mathbb{P}\left(\bar{T}_{k_n}-f_n(k_n)\in[a,b], \bar{T_k}\leq f_n(k)+x, k\leq k_n\right)\\&\leq \mathbb{P}\left(\sqrt{b_n}\hat{S}_{k_n}-f_n(k_n)\in[a-d_n,b+d_n],\hat{S}_{k}\leq\frac{1}{\sqrt{b_n}}(f_n(k)+x)+2,k\leq k_n\right),
		\end{align*}
		and we use Lemma \ref{10} to complete the proof.
		\end{proof}
	We now prove  Lemma \ref{Stone barrier} for  random walk satisfying $(\mathbf{H_2})$. 
	Set $$\bar{F}_n(k)=\frac{k}{k_n}a_n-c_n\mathbf{1}_{k\ne0,k_n}$$ $k=0,..k_n$ 	where $(c_n)_{n\in{\mathbb{N}}}$ is a sequence of integers satisfying $\lim_{n\to\infty}c_n=\infty$ and
	\begin{align}
	%\label{equation1551}
	    &\lim_{n\to\infty}\frac{c_n}{\sqrt{b_n}}=0.
	\end{align} 
	Before moving to the proof we need the following Lemma.
	\begin{lemma}
	\label{323}
	    Uniformly in $x\in{[c_n,\infty]}$, we have 
	    \begin{align*}
	       & \lim_{n\to \infty} \left| \frac{L(\frac{x-c_n}{\sqrt{b_n}})}{L(\frac{x}{\sqrt{b_n}})}-1\right|=0.
	    \end{align*}
	\end{lemma}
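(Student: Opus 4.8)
The plan is to reduce everything to the known asymptotics and regularity of the renewal function $L$. Recall from \eqref{565} that $L(x)/x \to c_0$ as $x\to\infty$, and from \eqref{renewl} that $L(x)\le C(1+x_+)$, while $L$ is non-decreasing and continuous (no atoms under $(\mathbf{H_1})$, which is the setting in which $L$ was introduced). We must show $L\big(\tfrac{x-c_n}{\sqrt{b_n}}\big)/L\big(\tfrac{x}{\sqrt{b_n}}\big)\to 1$ uniformly over $x\ge c_n$. Write $t=\tfrac{x}{\sqrt{b_n}}\ge \tfrac{c_n}{\sqrt{b_n}}=:\varepsilon_n$ and $s=\tfrac{x-c_n}{\sqrt{b_n}}=t-\varepsilon_n\ge 0$, and note that by hypothesis $\varepsilon_n\to 0$. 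So we want $\sup_{t\ge \varepsilon_n}\big|L(t-\varepsilon_n)/L(t)-1\big|\to 0$.

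First I would split the range of $t$ into a ``large'' regime $t\ge T$ and a ``small'' regime $\varepsilon_n\le t\le T$, for a fixed threshold $T$ to be sent to infinity at the end. In the large regime, \eqref{565} gives $L(t)=c_0 t(1+o(1))$ and $L(t-\varepsilon_n)=c_0(t-\varepsilon_n)(1+o(1))$ as $t\to\infty$; more precisely, for any $\eta>0$ there is $T_\eta$ so that $|L(u)/(c_0 u)-1|<\eta$ for all $u\ge T_\eta/2$, say, and then for $t\ge T_\eta$ and $n$ large enough that $\varepsilon_n<T_\eta/2$ we get $t-\varepsilon_n\ge T_\eta/2$, hence
\[
\frac{L(t-\varepsilon_n)}{L(t)} = \frac{1+O(\eta)}{1+O(\eta)}\cdot\frac{t-\varepsilon_n}{t} = (1+O(\eta))\big(1-\tfrac{\varepsilon_n}{t}\big),
\]
and since $\varepsilon_n/t\le \varepsilon_n/T_\eta\to 0$, the ratio is $1+O(\eta)+o(1)$ uniformly over $t\ge T_\eta$. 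In the small regime $\varepsilon_n\le t\le T_\eta$, I would use that $L$ is continuous on the compact interval $[0,T_\eta]$, hence uniformly continuous there, and bounded below: since $L$ is non-decreasing with $L(0)=1>0$ we have $L(t)\ge 1$ for all $t\ge 0$. Then $|L(t-\varepsilon_n)-L(t)|\le \omega_{T_\eta}(\varepsilon_n)$, the modulus of continuity of $L$ on $[0,T_\eta]$, which tends to $0$ as $n\to\infty$ since $\varepsilon_n\to 0$; dividing by $L(t)\ge 1$ gives $|L(t-\varepsilon_n)/L(t)-1|\le \omega_{T_\eta}(\varepsilon_n)\to 0$ uniformly on this regime. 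Combining the two regimes: $\limsup_n \sup_{t\ge\varepsilon_n}|L(t-\varepsilon_n)/L(t)-1|\le C\eta$, and letting $\eta\to 0$ finishes the proof.

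The only mild subtlety is making sure the two regimes overlap correctly as both $n\to\infty$ and $T_\eta$ is chosen: the threshold $T_\eta$ is fixed first (depending only on $\eta$ via \eqref{565}), and then $n$ is taken large enough that $\varepsilon_n=c_n/\sqrt{b_n}$ is smaller than, say, $T_\eta/2$, which is legitimate because $c_n/\sqrt{b_n}\to 0$ by assumption. I do not expect any real obstacle here — the statement is essentially the assertion that a regularly-varying-at-infinity (index $1$), continuous, positive function is ``slowly perturbed'' by a vanishing additive shift, uniformly; the proof is the standard large-argument/compact-argument dichotomy. The one thing to be careful about is not to claim uniform continuity of $L$ on all of $[0,\infty)$ (false in general), which is why the compact truncation at $T_\eta$ is essential.
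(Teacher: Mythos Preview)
Your proposal is correct and follows essentially the same approach as the paper's proof: both split the range into a large-argument regime where the asymptotic $L(x)\sim c_0 x$ from \eqref{565} controls the ratio, and a compact regime where uniform continuity of $L$ together with the lower bound $L\ge L(0)=1$ handles the rest, then send the threshold to infinity. Your treatment of the overlap (requiring $t-\varepsilon_n\ge T_\eta/2$) is in fact slightly cleaner than the paper's, which implicitly uses $c_n/\sqrt{b_n}\to 0$ to ensure both arguments lie in the asymptotic regime.
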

	\begin{proof}
	     The proof follows from the properties of the renewal function introduced in \eqref{mhhh}. We first consider the ratio $\frac{L(y-\frac{c_n}{\sqrt{b_n}})}{L(y)}$ for large value of $y$. Let $\epsilon>0$, by \eqref{565}, there exists a constant $A=A(\epsilon)>0$ sufficiently large such that for all $y\geq A$ we have 
	     \begin{align}
	     \label{equation 27}
	         &c_0(1-\epsilon)y\leq L(y)\leq c_0(1+\epsilon)y.
	     \end{align}
	    Using  \eqref{equation 27}  we have
	     \begin{align}
	    \label{Ren1}
	        &\sup_{\frac{x}{\sqrt{b_n}}\geq A}\left| \frac{L(\frac{x-c_n}{\sqrt{b_n}})}{L(\frac{x}{\sqrt{b_n}})}-1\right|\leq |\frac{1+\epsilon}{1-\epsilon}-1|+\frac{c_0(1+\epsilon)c_n}{A\sqrt{b_n}}.
	    \end{align}
	   
	    On the other hand, recall that $y\mapsto L(y)$  is continuous. Hence it is uniformly continuous on
	    $[0,A]$, and
	    
	    \begin{align*}
	        & \sup_{x\in[c_n,A\sqrt{b_n}]}\left| L(\frac{x}{\sqrt{b_n}})-L(\frac{x-c_n}{\sqrt{b_n}})\right|\\&\leq\sup_{y\in[\frac{c_n}{\sqrt{b_n}},A]}\left| L(y)-L(y-\frac{c_n}{\sqrt{b_n}})\right|\leq w_{L}(\frac{c_n}{\sqrt{b_n}}),
	    \end{align*}
	    where $w_L(\delta)=\displaystyle\sup_{\substack{s,t \\|t-s|\leq \delta}}|L(t)-L(s)|$. Since  the renewal function is increasing, we have $L(y)\geq 1$, for all $y\geq 0$, which implies that 
	    \begin{align}
	    \label{Ren2}
	    &\sup_{y\in[\frac{c_n}{\sqrt{b_n}},A]}\left| \frac{L(y-\frac{c_n}{\sqrt{b_n}})}{L(y)}-1\right|\leq   w_{L}(\frac{c_n}{\sqrt{b_n}}) \to 0 \text{ as } n \to \infty.
	    \end{align}
	     By \eqref{Ren1} and \eqref{Ren2}  we have 
	       \begin{eqnarray*}
	         \sup_{x\in{[c_n,\infty)}}\left| \frac{L(\frac{x-c_n}{\sqrt{b_n}})}{L(\frac{x}{\sqrt{b_n}})}-1\right|&\leq & \sup_{x\in[c_n,A\sqrt{b_n}]}\left| \frac{L(\frac{x-c_n}{\sqrt{b_n}})}{L(\frac{x}{\sqrt{b_n}})}-1\right|+ \sup_{\frac{x}{\sqrt{b_n}}\geq A}\left| \frac{L(\frac{x-c_n}{\sqrt{b_n}})}{L(\frac{x}{\sqrt{b_n}})}-1\right|\\&\leq &  w_{L}(\frac{c_n}{\sqrt{b_n}})+|\frac{1+\epsilon}{1-\epsilon}-1|+\frac{c_0(1+\epsilon)c_n}{A\sqrt{b_n}}.
	    \end{eqnarray*}
	    
	     Letting $n\to \infty$ then $\epsilon \to 0$ we conclude the proof.
	    \end{proof}
	\begin{corollary}
	\label{stonebis}
		 Setting $$a_n=\frac{-3}{2\theta^{*}}\ln(n)+\frac{\log(b_n)}{\theta^{*}},$$ then there exists $C>0$ such that 
		With the same notation as Lemma \ref{Stone barrier} we have  
		\begin{align}
		\label{sb}
		&\mathbb{E}\left(f(\bar{T}_{k_n}-a_n+x)e^{-\theta^{*}\bar{T}_{k_n}}\mathbf{1}_{\{\bar{T}_{k}\leq \bar{F}_n(k)-x,k\leq k_n\}}\right)=\frac{e^{\theta^{*}x}}{\sqrt{2\pi\sigma^2 b_n}}\int_{-\infty}^{0} f(y)e^{-\theta^{*}y}dy\left(L(\frac{-x}{\sqrt{b_n}})+o(1)\right),
		\end{align}
		uniformly in $x \in{[-r_n, -(c_n+d_n)]}$.
	\end{corollary}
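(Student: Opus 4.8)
The plan is to establish this as the $\mathbf{(H_2)}$-counterpart of Lemma \ref{Stone barrier} by re-running the proof of that lemma with the Gaussian walk replaced by the Cramér walk $(\bar T_k)_{k\le k_n}$: every Gaussian random-walk estimate used there is replaced by its KMT-based analogue (Lemma \ref{MAbis} in place of Lemma \ref{MA}, Lemma \ref{7bis} in place of Lemma \ref{10}), and the appeal to the ordinary Stone local limit theorem is replaced by the refined local limit theorem of \cite{borovkov2017generalization}, which applies because $\bar T_1$, being (a shift of) a convolution of $b_n$ i.i.d. exponential tilts of $Y_1$, satisfies the Cramér condition. First I would reduce, exactly as in Lemma \ref{Stone barrier}, to the case $f=\mathbf{1}_{[-a,0]}$ (a general $f\in\mathcal{C}^{l,+}_b$ is a difference of two such indicators up to a step-function approximation, and the truncation at $+\infty$ costs only $o(1)$ because of the weight $e^{-\theta^{*}y}$); write $h(z)=e^{-\theta^{*}z}f(z)$ and fix $A$ with $\operatorname{supp}f\subseteq[-A,\infty)$. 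The observation on which everything rests is that the barrier constraint at the last index, $\bar T_{k_n}\le\bar F_n(k_n)-x=a_n-x$, together with $\operatorname{supp}h\subseteq[-A,\infty)$, pins $\bar T_{k_n}-a_n+x$ inside the \emph{fixed} compact interval $[-A,0]$; hence on the relevant event $e^{-\theta^{*}\bar T_{k_n}}\le e^{\theta^{*}A}e^{\theta^{*}x}e^{-\theta^{*}a_n}=e^{\theta^{*}A}e^{\theta^{*}x}n^{3/2}/b_n$, which is what makes the weighted expectation of the right order.

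Granting this, I would follow the proof of Lemma \ref{Stone barrier} line by line. The straightening change of measure \eqref{change} is replaced by the exponential tilt of $\bar T$ of parameter $a_n/n\to0$, whose cumulant correction is $O((a_n/n)^2 k_n b_n)=O((\log n)^2/n)=o(1)$ and hence harmless; the Markov splitting at $p=\lfloor k_n/2\rfloor$, the time reversal, and the decomposition at the first time the walk attains its maximum are purely formal and carry over verbatim; Caravenna's local limit theorem \cite{caravenna2005local} for the walk conditioned to stay positive and the convergence of the rescaled conditioned endpoint to the Rayleigh law (Lemma 2.2 of \cite{zbMATH05935920}) hold for any Cramér walk and are applied to $\bar T$ directly, producing the factor $\int f(y)e^{-\theta^{*}y}\,dy$; and the coarse "stay below a barrier / short excursion" bounds, obtained under $\mathbf{(H_1)}$ from Lemma \ref{10}, are now supplied by Lemma \ref{7bis}. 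The extra term $2d_n$ that Lemma \ref{7bis} carries relative to Lemma \ref{10} only inflates those coarse bounds by a factor $O(1+d_n)$, which is absorbed since each of them already had a spare power of $k_n$ and since $d_n/\sqrt{b_n}\to0$. The reason the barrier is taken to be $\bar F_n(k)=\frac{k}{k_n}a_n-c_n\mathbf{1}_{k\ne0,k_n}$ rather than the straight line is that the refined local limit inputs are uniform only after backing the barrier off by $c_n$, while by Lemma \ref{323} the renewal factor coming from the barrier probability on the first block is $L\bigl((-x+O(c_n))/\sqrt{b_n}\bigr)=L(-x/\sqrt{b_n})(1+o(1))$ uniformly in $x\in[-r_n,-(c_n+d_n)]$. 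Refining the step-function subdivisions and letting the truncation parameter $B\to\infty$ as at the end of the proof of Lemma \ref{Stone barrier} then gives \eqref{sb}.

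The main obstacle is the bookkeeping of the coupling scale $d_n$, which tends to $\infty$: one must \emph{not} transport $f(\bar T_{k_n}-a_n+x)$ itself onto the Gaussian walk, since that would dilate the effective support of $f$ by $d_n$ and wreck the constant $\int f(y)e^{-\theta^{*}y}\,dy$. That constant has to be extracted by a genuine local limit theorem applied to $\bar T$ over its last $\Theta(b_n)$ steps, so that the coupling survives only inside the coarse barrier estimates (where a shift of order $d_n=o(\sqrt{b_n})$ is harmless) and inside the renewal function $L$ (which lives at scale $\sqrt{b_n}$ and is controlled by Lemma \ref{323}). Verifying that each $o(\cdot)$ in the proof of Lemma \ref{Stone barrier} still closes after all these substitutions — in particular that the $2d_n$ of Lemma \ref{7bis} never propagates into the main term — is the bulk of the work.
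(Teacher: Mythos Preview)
Your plan is not the paper's, and its core step carries a gap you do not confront. You propose to re-run Lemma \ref{Stone barrier} with Caravenna's conditioned local limit theorem and the Rayleigh limit applied ``directly to $\bar T$''. But those results are stated for a \emph{fixed} walk, whereas the step of $(\bar T_k)$ is a $b_n$-fold convolution that varies with $n$; the $o(1)$'s in Caravenna's theorem depend on the step law, so uniformity in $b_n$ would have to be proved separately, and the renewal function the argument outputs is that of the $n$-dependent walk $\bar T/\sqrt{b_n}$, not the Gaussian $L$ appearing in \eqref{sb}. Viewing $\bar T_k$ as $kb_n$ steps of the fixed walk $\bar W$ does not help either, since the barrier is imposed only at multiples of $b_n$. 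The exponential tilt you suggest in place of \eqref{change} adds a second layer of $n$-dependence, since the tilted walk no longer has the law of $\bar T$.

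The paper instead applies Markov at time $k_n-1$ (not $\lfloor k_n/2\rfloor$) and handles the single last increment $\bar T_{k_n}-\bar T_{k_n-1}$ by the refined Stone local limit theorem of \cite{borovkov2017generalization}, obtaining a Gaussian kernel $\exp\bigl(-(\cdot)^2/2b_n\sigma^2\bigr)$ plus an additive $O(1/b_n)$ error; this is where $\int f(y)e^{-\theta^*y}\,dy$ is extracted. The remaining $(k_n-1)$-step barrier expectation, now weighted by that Gaussian kernel, is transported \emph{in the main term} via KMT to the coupled Gaussian walk $\hat S$, straightened by the Gaussian change of measure, and its asymptotic is read off from Lemma~2.3 of \cite{zbMATH06216112}, which delivers the Gaussian renewal factor $L$ directly. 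Lemma \ref{323} absorbs the $c_n,d_n$ shifts, and a side truncation $|\bar T_{k_n-1}-a_n+x|\le h_n\sqrt{b_n}$ with $h_n\asymp\sqrt{\log n}$ keeps the additive $O(1/b_n)$ from polluting the main term; upper and lower bounds are matched separately. Your final paragraph --- peel off the last $\Theta(b_n)$ steps so the coupling never touches the $f$-window --- is in fact the right idea and is exactly what the paper does; but after that peeling, KMT is used for the \emph{sharp} asymptotic via A\"{\i}d\'ekon's lemma, not merely in coarse bounds, and there is no appeal to Caravenna or to the Lemma \ref{Stone barrier} decomposition at all.
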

	\begin{proof}

	We will divide the proof of this corollary into two parts, proving separately an upper and a lower bound for \eqref{sb}.
		We set $h(z)=e^{-\theta^* z}f(z)$ . As a consequence, it is enough to prove that 
		\begin{align*}
		\label{30}
		&\mathbb{E}\left(h(\bar{T}_{k_n}-a_n+x)\mathbf{1}_{\{\bar{T}_{k}\leq\bar{F}_n(k)-x,k\leq k_n\}}\right)=
		\frac{1}{k_n^{3/2}\sqrt{2
		\pi \sigma^2 b_n}}\int_{-\infty}^{0} h(y)dy(	L(\frac{-x}{\sqrt{b_n}})+o(1))
		\end{align*}
		uniformly in $x\in{[-r_n,-(c_n+d_n)]}$.  Using the same arguments in Lemma \ref{Stone barrier}, it is enough to prove this Corollary with the function $h(z) =\mathbf{1}_{[-a,0]}(z)$ for some $a>0$. Then we write  
		\begin{align}
		%\label{30} 
		&\mathbb{E}\left(h(\bar{T}_{k_n}-a_n+x)\mathbf{1}_{\{\bar{T}_{k}\leq \bar{F}_n(k)-x,k\leq k_n\}}\right)=\mathbb{P}(\mathcal{A}^{(k)}_n(x))
		\end{align}
	where  $$\mathcal{A}^{(k)}_n(x)=\left\{\bar{T}_{k}\leq \bar{F}_n(k)-x,k\leq k_n,\bar{T}_{k_n}-a_n+x\geq -a \right\}$$  the event that the random walk $(\bar{T}_{k})_{k\leq k_n}$  stays below the barrier $k\mapsto F_n(k)$ for all $k\leq k_n$  and end up in a finite interval.  
	\begin{itemize}
	    \item \textbf{Upper bound}
	\end{itemize}
		We have
		\begin{align}
		&\label{quation12}\mathbb{P}(\mathcal{A}^{(k)}_n(x))= \mathbb{P}\left(\mathcal{A}^{(k)}_n(x),|\bar{T}_{k_n-1}-a_n+x|\leq h_n\sqrt{b_n}\right)+\mathbb{P}\left(\mathcal{A}^{(k)}_n(x),|\bar{T}_{k_n-1}-a_n+x|> h_n\sqrt{b_n}\right)		
		\end{align}
		where $(h_n)_{n\in{\mathbb{N}}}$ is a sequence growing to $\infty$, that we will fix later on. We bound these two quantities separately choosing $h_n$ such that
		\begin{align*}
		   &\limsup_{n\to \infty}\sup_{x\in{[-r_n,-c_n]}}L(\frac{-x}{\sqrt{b_n}})^{-1}k_n^{3/2}\sqrt{b_n}\mathbb{P}\left(\mathcal{A}^{(k)}_n(x),|\bar{T}_{k_n-1}-a_n+x|>h_n\sqrt{b_n}\right) =0.
		\end{align*}
			We first  observe that
	\begin{align*}
	   & \mathbb{P}\left(\mathcal{A}^{(k)}_n(x),|\bar{T}_{k_n-1}-a_n+x|>h_n\sqrt{b_n}\right)
	   =\mathbb{P}\left(\mathcal{A}^{(k)}_n(x),\bar{T}_{k_n-1}-a_n+x<-h_n\sqrt{b_n}\right)
	\end{align*}
	since the event $\left\{\bar{T}_{k_n-1}\leq \bar{F}_n(k_n-1)-x,\bar{T}_{k_n-1}-a_n+x>h_n\sqrt{b_n}\right\}$ is impossible for $n$ large enough.
		Then we have,
	\begin{align*}
	      &\limsup_{n\to \infty}\sup_{x\in{[-r_n,-c_n]}}L(\frac{-x}{\sqrt{b_n}})^{-1}k_n^{3/2}\sqrt{b_n}\mathbb{P}\left(\mathcal{A}^{(k)}_n(x),\bar{T}_{k_n-1}-a_n+x<-h_n\sqrt{b_n}\right)\\&\leq   \limsup_{n\to \infty}\sup_{x\in{[-r_n,-c_n]}}L(\frac{-x}{\sqrt{b_n}})^{-1}k_n^{3/2}\sqrt{b_n}\mathbb{P}\left(\bar{T}_{k_n}-a_n+x\in[-a,0],\bar{T}_{k_n-1}-a_n+x<-h_n\sqrt{b_n}\right)\\&\leq\limsup_{n\to \infty}\sup_{x\in{[-r_n,-c_n]}}L(\frac{-x}{\sqrt{b_n}})^{-1}k_n^{3/2}\sqrt{b_n}\mathbb{P}(|\bar{T}_1|\geq \sqrt{b_n} h_n).
	\end{align*}
	On the other hand we write
\begin{align*}
 & \mathbb{P}(|\bar{T}_1|\geq \sqrt{b_n} h_n) = \mathbb{P}(\bar{T}_1\geq \sqrt{b_n} h_n) + \mathbb{P}(- \bar{T}_1\geq \sqrt{b_n}h_n),
  \end{align*}
and additionally we have
\begin{align*}
  &\mathbb{P}\left(\bar{T}_1 \geq \sqrt{b_n} h_n\right)=\mathbb{P}\left(e^{\theta \bar{T}_1}\geq e^{\theta h_n\sqrt{b_n}}\right) \leq e^{b_n(\Lambda(\theta)-\theta h_n \sqrt{b_n})},
\end{align*}
for all $\theta>0.$
As $\Lambda$ is $\mathcal{C}^2$ on a neighbourhood of $0$, we have
\begin{align*}
   & \Lambda(\theta) = \Lambda(0) + \Lambda'(0)\theta + \frac{\Lambda''(0)\theta^2}{2} + o(\theta^2)) = \Lambda''(0)\theta^2/2 + o(\theta^2).
\end{align*}
 
Therefore, choosing $\theta=\frac{h_n}{\sqrt{b_n}}$, there exists $C>0$ such that for all $n$ large enough, we have
\begin{align*}
    &\mathbb{P}(\bar{T}_1\geq \sqrt{b_n} h_n) \leq  e^{-C h^2_n}.
\end{align*}
  
Similarly, we have $\mathbb{P}(\bar{T}_1\leq -\sqrt{b_n} h_n) \leq  e^{-C h^2_n}$ for $n$ large enough. Finally we obtain
\begin{align*}
	      &\limsup_{n\to \infty}\sup_{x\in{[-r_n,-c_n]}}L(\frac{-x}{\sqrt{b_n}})^{-1}k_n^{3/2}\sqrt{b_n}\mathbb{P}\left(\mathcal{A}^{(k)}_n(x),\bar{T}_{k_n-1}-a_n+x<-h_n\sqrt{b_n}\right)\\&\leq\limsup_{n\to \infty}\sup_{x\in{[-r_n,-c_n]}}L(\frac{-x}{\sqrt{b_n}})^{-1}k_n^{3/2}\sqrt{b_n}e^{-Ch^2_n}
	\end{align*}
	which goes to zero as $n \to \infty$ as long as  $h_n> 2\sqrt{\frac{\log(n)}{C}}$.
	
	 We now bound the first quantity in the right hand-side of \eqref{quation12}.  
		Applying the Markov property at time $k_n-1$ we get
		\begin{align}
		\label{112}
		&\mathbb{P}\left(\mathcal{A}^{(k)}_n(x),|\bar{T}_{k_n-1}-a_n+x|\leq h_n\sqrt{b_n}\right)\\&\nonumber=\mathbb{E}\left(f_{n}(\bar{T}_{j}+a_n-x)\mathbf{1}_{\left\{\bar{T_k}\leq\bar{F}_n(k)-x,|\bar{T}_{k_n-1}-a_n+x|\leq h_n\sqrt{b_n}, k\leq k_n-1|\right\}}\right)
		\end{align}
		 where $f_{n}(z)=\mathbb{P}_z\left(\bar{T}_{1}\in{[-a,0]}\right)$ for all  $z\in{\mathbb{R}}$. We estimate the function  $z\mapsto f_{n}(z)$, $n\in{\mathbb{N}}$, using the refined Stone's local limit theorem in \cite[Theorem 2.1]{borovkov2017generalization}.
		By $(\mathbf{H_2})$ there exists a constant $c>0$ such that for all $z\in{\mathbb{R}}$ 
			\begin{align*}
		&f_{n}(z)\leq \frac{a}{\sqrt{2\pi b_n\sigma^2}}\exp\left(\frac{-(z-a_n+x)^2}{2 b_n}\right)+\frac{c}{b_n}.
	\end{align*} 
			To approximate \eqref{112} for a random walk satisfying $(\mathbf{H}_2)$
			we  apply Theorem \ref{Komlos}, there exists a constant $C>0$ such that for all $\gamma>0$ we have 
		\begin{align}
		&\label{mriguel}
	\mathbb{P}\left(\mathcal{A}^{(k)}_n(x),|\bar{T}_{k_n-1}-a_n+x|\leq h_n\sqrt{b_n}\right)	\\&\nonumber\leq\frac{C}{n^{\gamma}}+\frac{a}{\sqrt{2\pi b_n\sigma^2}} \mathbb{E}\left(\sup_{|w|\leq d_n}\exp\left(\frac{-(\hat{S}_{k_n-1}+w-a_n+x)^2}{2 b_n\sigma^2}\right)\mathbf{1}_{\mathcal{B}^{(k)}_n(x)}\right)+\frac{c}{b_n}\mathbb{P}\left(\mathcal{B}^{(k)}_n(x)\right).
	\end{align} 
  where $\mathcal{B}^{(k)}_n(x)=\left\{\hat{S}_{k}\leq \tilde{F}_n(k)+d_n-x,k\leq k_n-1,|\hat{S}_{k_n-1}-a_n+x|\leq h_n \sqrt{b_n}+d_n\right\}$. We write then 
		\begin{align}
		\label{222}
		    	&\mathbb{E}\left(\sup_{|w|\leq d_n}\exp\left(\frac{-(\hat{S}_{k_n-1}+x-a_n+w)^2}{2 b_n}\right)\mathbf{1}_{\mathcal{B}^{(k)}_n(x)}\right)\\ &\nonumber=\mathbb{E}_{\mathbb{Q}}\left(\sup_{|w|\leq d_n}e^{-\frac{a_n}{n}\sqrt{b_n}\tilde{S}_{k_n-1}+n\Lambda(a_n/n)}\exp\left(\frac{-(\sqrt{b_n}\tilde{S}_{k_n-1}+x-a_n+w)^2}{2 b_n}\right)\mathbf{1}_{\mathcal{C}^{k}_n(x)}\right) 
		\end{align}
		 where $$\mathcal{C}^{(k)}_n(x)=\left\{\tilde{S}_{k}\leq \frac{-x+d_n-c_n}{\sqrt{b_n}},k\leq k_n-1,|\sqrt{b_n}\tilde{S}_{k_n-1} +\frac{a_n}{k_n}-x|\le h_n\sqrt{b_n}+d_n\right\}$$ and
		 $\sqrt{b_n}\tilde{S}_k=\sqrt{b_n}\hat{S}_k-\frac{k}{k_n}a_n$ is a centred Gaussian random walk under the measure $\mathbb{Q}$ defined in the proof of Lemma \ref{Stone barrier}.
		Then using the dominated convergence theorem  and the fact that $\frac{d_n^2}{b_n} \to_{n\to\infty}0$, and rescaling by $\frac{1}{\sqrt{b_n}}$ we obtain
	\begin{align*}
		&\limsup_{n\to \infty}\mathbb{E}\left(\exp\left(\sup_{|w|\leq d_n}\frac{-(\hat{S}_{k_n-1}+w -a_n+x)^2}{2 b_n\sigma^2}\right)\mathbf{1}_{\mathcal{C}^{(k)}_n(x)}\right)\\&\leq \limsup_{n\to \infty}\mathbb{E}_{\mathbb{Q}}\left(\exp\left(\frac{-(\sqrt{b_n}\tilde{S}_{k_n-1}+x)^2}{2 b_n\sigma^2}\right)\mathbf{1}_{\left\{\tilde{S}_{k}\leq \frac{1}{\sqrt{b_n}}(x+d_n-c_n),k\leq k_n-1\right\}}\right)\\& \leq  \limsup_{n\to \infty}\mathbb{E}_{\mathbb{Q}}\left(\exp\left(\frac{-(\tilde{S}_{k_n-1}+\frac{-x}{\sqrt{b_n}})^2}{2\sigma^2}\right)\mathbf{1}_{\left\{\tilde{S}_{k} \leq \frac{-x+d_n-c_n}{\sqrt{b_n}}, k\leq k_n-1\right\}}\right).
		\end{align*}
		Applying Lemma $2.3$ in \cite{zbMATH06216112} for the  Gaussian random walk $(\tilde{S}_k)_{k\leq k_n}$, and using Lemma \ref{323}  we obtain for all $\frac{x}{\sqrt{b_n}}\in{[-\frac{r_n}{\sqrt{b_n}},-\frac{(c_n+d_n)}{\sqrt{b_n}}]}$, 
		\begin{align*}
		   &\limsup_{n\to \infty}k_n^{\frac{3}{2}} \mathbb{E}_{\mathbb{Q}}\left(\exp\left(\frac{-(\tilde{S}_{k_n-1}+\frac{-x}{\sqrt{b_n}})^2}{2\sigma^2}\right)\mathbf{1}_{\left\{\tilde{S}_{k} \leq \frac{-x+d_n-c_n}{\sqrt{b_n}}, k\leq k_n-1\right\}}\right)
		   \\&\leq  \frac{1}{\sqrt{2\pi \sigma^2}}\int_{0}^{\infty} e^{-\frac{y^2}{2\sigma^2}}L(y)dy\times L(\frac{-x+d_n-c_n}{\sqrt{b_n}}).
		\end{align*}
		We now observe that by \eqref{mhhh},  $\frac{1}{\sqrt{2\pi\sigma^2}}\int_{0}^{\infty}L(y)e^{-\frac{y^2}{2\sigma^2}} dy=\mathbb{E}\left(L(\hat{S}_1)\mathbf{1}_{\left\{\hat{S}_1\geq 0\right\}}\right))=L(0)=1$, which implies 
		\begin{align*}
		     &\limsup_{n\to \infty} k_n^{\frac{3}{2}}\mathbb{E}_{\mathbb{Q}}\left(\exp\left(\frac{-(\sqrt{b_n}\tilde{S}_{k_n-1}+x)^2}{2 b_n\sigma^2}\right)\mathbf{1}_{\left\{\tilde{S}_{k}\leq \frac{-x+d_n-c_n}{\sqrt{b_n}}, k\leq k_n-1\right\}}\right)\\& \leq  L(\frac{-x+d_n-c_n}{\sqrt{b_n}}).
		\end{align*}
	
			To complete the proof of the upper bound it remains to show that  	
		\begin{align}
		\label{300}
		  &\limsup_{n\to \infty}\sup_{x\in{[-r_n,-c_n]}} L(\frac{-x}{\sqrt{b_n}})^{-1}k_n^{3/2}b_n^{-\frac{1}{2}}\mathbb{E}\left(\mathbf{1}_{\mathcal{B}^{k}_n(x)}\right)=0.
		\end{align}
	 Using similar computations  we have  
	\begin{align*}
	     &\sup_{x\in{[-r_n,-c_n]}}k_n^{\frac{3}{2}}
	     \mathbb{P}\left(\hat{S}_{k}\leq \tilde{F}_n(k)+d_n-x,k\leq k_n-1,|\hat{S}_{k_n-1}-a_n+x|\leq h_n \sqrt{b_n}+d_n\right)\\&\leq \sup_{x\in{[-r_n,-c_n]}}k_n^{\frac{3}{2}} \mathbb{P}\left(\tilde{S}_{k}\leq \frac{-x+d_n-c_n}{\sqrt{b_n}},\tilde{S}_{k_n-1}-\frac{a_n}{k_n\sqrt{b_n}}+\frac{x}{\sqrt{b_n}}\geq -h_n-\frac{d_n}{\sqrt{b_n}}, k\leq k_n-1\right)\\&\leq C L(\frac{-x+d_n-c_n}{\sqrt{b_n}})\int_{0}^{h_n+\frac{2d_n}{\sqrt{b_n}}}L(y)dy.
	     \end{align*}
	     Since the renewal function $x\mapsto R(x)$ is increasing we have by \eqref{1v1}
	     \begin{align*}
	         &\sup_{x\in{[-r_n,-c_n]}} 
	        k_n^{\frac{3}{2}} \mathbb{P}\left(\hat{S}_{k}\leq \tilde{F}_n(k)+d_n-x,k\leq k_n-1,|\hat{S}_{k_n-1}-a_n+x|\leq h_n \sqrt{b_n}+d_n\right)
	     \\& \leq  C (h_n+2\frac{d_n}{\sqrt{b_n}})L(\frac{-x+d_n-c_n}{\sqrt{b_n}}) L(h_n+\frac{d_n}{\sqrt{b_n}}) \leq C (h_n+\frac{2d_n}{\sqrt{b_n}})^2 L(\frac{-x+d_n-c_n}{\sqrt{b_n}})
	     \end{align*}
	      where we used in the last inequality that $L(t_n)\leq c t_n$ for some constant $c>0$ when $t_n \to_{n\to \infty} \infty$.
	      Thanks to Lemma \ref{323}  we obtain 
	      \begin{align*}
	           &\sup_{x\in{[-r_n,-c_n]}}L(\frac{-x}{\sqrt{b_n}})^{-1}k_n^{3/2}b_n^{-\frac{1}{2}}\mathbb{P}\left(\mathcal{B}^{(k)}_n(x)\right)\leq  C \frac{(h_n+\frac{2d_n}{\sqrt{b_n}})^2}{\sqrt{b_n}} 
	      \end{align*}
	       which goes to zero since $\frac{\log(n)+d_n}{\sqrt{b_n}}\to 0$ as $n\to \infty$.
	       Finally by \eqref{mriguel}, \eqref{300} and Lemma \ref{323} we deduce that
	       \begin{align}
		&\label{upper}
		  \limsup_{n\to \infty}\sup_{x\in{[-r_n,-c_n]}}L(\frac{-x}{\sqrt{b_n}})^{-1}\sqrt{b_n}k_n^{\frac{3}{2}}\mathbb{E}(h(\bar{T}_{k_n}-a_n+x)\mathbf{1}_{\{\bar{T}_{k}\leq \bar{F}_n(k)-x,k\leq k_n\}})	\\& \nonumber \leq \limsup_{n\to \infty}\sup_{x\in{[-r_n,-c_n]}}L(\frac{-x}{\sqrt{b_n}})^{-1}\sqrt{b_n}k_n^{\frac{3}{2}} \mathbb{P}\left(\mathcal{A}^{(k)}_n(x),|\bar{T}_{k_n-1}-a_n+x|\leq h_n\sqrt{b_n}\right)\\&\nonumber\leq \frac{1}{\sqrt{2\sigma^2 \pi}}\int_{-\infty}^{0} f(y)e^{-\theta^{*}y}dy.
		\end{align}
		 We now treat the lower bound. 
		 	\begin{itemize}
	    \item \textbf{Lower bound}
	\end{itemize}
We now compute a lower bound for $\mathbb{P}\left(\mathcal{A}^{(k)}_n(x)\right)$.		 
		Using similar arguments to these used in the upper bound %ignoring in this case the event ${\left\{|\bar{T}_{k_n-1}-a_n+x|\leq h_n\sqrt{b_n} \right\}}$, which does not play any role in the proof of the lower bound %
		we have by Theorem \ref{Komlos}  
		\begin{align*}
		&\label{mriguel}
	\mathbb{P}\left(\bar{T}_{k}\leq \bar{F}_n(k)-x,k\leq k_n,\bar{T}_{k_n}-a_n+x\in[-a,0]\right)	\\&\geq \frac{a}{\sqrt{2\pi b_n\sigma^2}}\mathbb{E}\left(\inf_{|w|\leq d_n}\exp\left(\frac{-(  \sqrt{b_n}\hat{S}_{k_n-1}+w-a_n+x)^2}{2 b_n\sigma^2}\right)\mathbf{1}_{ \left\{\sqrt{b_n}\tilde{S}_k\leq \bar{F}_n(k) -d_n-x, k\leq k_n-1\right\}}\right).
	\end{align*} 
	  Similar computations to these used in the upper bound lead to
		\begin{align*}
		    	&\liminf_{n\to \infty}\mathbb{E}\left(\inf_{|w|\leq d_n}\exp\left(\frac{-(\sqrt{b_n}\hat{S}_{k_n-1}+x-\frac{a_n}{n}+w)^2}{2 b_n}\right)\mathbf{1}_{\left\{\hat{S}_{k}\leq \bar{F}_n(k)-x-d_n,k\leq k_n-1\right\}}\right)\\&\geq \liminf_{n\to \infty}\mathbb{E}_{\mathbb{Q}}\left(\exp\left(\frac{-(\tilde{S}_{k_n-1}+\frac{x}{\sqrt{b_n}})^2}{2\sigma^2}\right)\mathbf{1}_{\left\{\tilde{S}_{k}\leq \frac{-x-d_n-c_n}{\sqrt{b_n}}, k\leq k_n-1\right\}}\right).
		    	\end{align*}
		Applying again Lemma $2.3$ in \cite{zbMATH06216112} for the  Gaussian random walk $(\tilde{S}_k)_{k\leq k_n}$, we obtain for all $x\in{[-r_n,-(c_n+d_n)]}$,
		\begin{align*}
		   &\liminf_{n\to \infty}k_n^{\frac{3}{2}}\mathbb{E}_{\mathbb{Q}}\left(\exp\left(\frac{-(\tilde{S}_{k_n-1}+\frac{x}{\sqrt{b_n}})^2}{2\sigma^2}\right)\mathbf{1}_{\left\{\tilde{S}_{k}\leq \frac{-x-d_n-c_n}{\sqrt{b_n}}, k\leq k_n-1\right\}}\right)\\&\geq  \frac{1}{\sqrt{2\pi\sigma^2}}\int_{0}^{\infty} e^{-\frac{y^2}{2\sigma^2}}R(y)dy\times L(\frac{-x-(c_n+d_n)}{\sqrt{b_n}}).
		\end{align*}
		Finally, by Lemma \ref{323} we get  
		\begin{align}
		%\label{aminov}
		 & \liminf_{n\to \infty}\inf_{x\in{[-r_n,-(c_n+d_n)]}}(L(\frac{-x}{\sqrt{b_n}})^{-1}\sqrt{b_n}k_n^{\frac{3}{2}}\mathbb{E}(h(\bar{T}_{k_n}-a_n+x)\mathbf{1}_{\{\bar{T}_{k}\leq \bar{F}_n(k)-x,k\leq k_n\}})\\& \geq \nonumber \frac{1}{\sqrt{2\pi\sigma^2}}\int_{-\infty}^{0} f(y)e^{-\theta^{*}y}dy.
		\end{align}
		Combining equations \eqref{upper} and \eqref{aminov} we deduce that
		\begin{align*}
		&\mathbb{E}\left(h(\bar{T}_{k_n}-a_n+x)\mathbf{1}_{\{\bar{T}_{k}\leq \bar{F}_n(k)-x,k\leq k_n\}}\right)=\frac{e^{\theta^*x}}{\sqrt{2\pi\sigma^2}\sqrt{b_n}k_n^{\frac{3}{2}}}\int_{-\infty}^{0} f(y)e^{-\theta^{*}y}dy(L(\frac{-x}{\sqrt{b_n}})+o(1))
		\end{align*}
		uniformly in $x \in{[-r_n, -(c_n+d_n)]}$.
	\end{proof}
	\section{The modified extremal process}
	Recall that $(\bar{T_k})_{1\leq k\leq  k_n}$ is a sequence of centred random walk with $\mathrm{Var}(\bar{T_k})=kb_n \sigma^2$ and $a_n=m_n-k_nb_nv$. The goal of the next section is to introduce a modified extremal process and to prove that it has the same weak limit as the original extremal process $\mathcal{E}_n$.
	
	 Start by setting  $$\mathcal{E}_{n,R_n}=\sum_{u\in{\mathcal{H}_{k_n}}}\delta_{S_u-m_n}\mathbf{1}_{\{S_{u_{k}}\leq R_n(k), \forall  k\leq k_n\}},$$
	where refer to the function $R_n:\{0...,k_n\}\mapsto \mathbb{R}$ as a barrier. 
	More precisely	our objective is to prove that the weak limit of the modified extremal process $\mathcal{E}_{n,R_n}$ and the original extremal process $\mathcal{E}_{n}$ coincide for a well-chosen function $R_n$.	The main steps of the proof of Theorem \ref{Theorem} are the following:
	\begin{itemize}
		\item We show that there exists an upper barrier such that, with high probability, all individuals stay below it all most of time.
		\item  The second step  is to locate the paths of extremal individuals. Here the method is inspired from the work of Arguin, Bovier and Kistler in \cite{zbMATH05975783} in the context of branching Brownian motion. 
		\item We show that all individuals contributing in the extremal process split from the root. 
	\end{itemize}
 We start by proving that there exists a barrier $R_n$ such that, with high probability, all individuals stay below it all most of time. %The existence of this barrier gives us further information for the localization of the paths of the extremal particles. 
	\begin{lemma}
	\label{lemma 14}
		Consider the barrier   	$$R_n(k)=kb_nv-\frac{3}{2\theta^{*}}\log(\frac{k_nb_n+1}{(k_n-k)b_n+1})+c_n,  k=0..., k_n$$ 
	where $(c_n)_{n\in{\mathbb{N}}}$ is the sequence of integers defined in \eqref{equation1551}.
		It then holds:
		\begin{align*}
		&
		\mathbb{P}(\exists u\in{\mathcal{H}_{k_n}}, S_{u_{k}}>R_n(k), \text{ for some }  k\leq k_n)=o(1) \text{ when } n \uparrow  \infty.
		\end{align*} 
	\end{lemma}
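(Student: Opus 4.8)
The plan is to bound the probability by a first-moment estimate: I would reduce to a ballot-type random walk statement through the many-to-one formula of Proposition~\ref{M}, and then invoke Lemma~\ref{10} under $\mathbf{(H_1)}$, respectively its $\mathbf{(H_2)}$ counterpart obtained from the KMT coupling of Theorem~\ref{Komlos} (as in Lemma~\ref{7bis}). First I would decompose the bad event according to the first generation $j\le k_n$ at which the barrier is crossed along a lineage: since $R_n(0)=c_n>0$ no crossing happens at generation~$0$, so if some $u\in\mathcal H_{k_n}$ has $S_{u_k}>R_n(k)$ for some $k$, then its ancestor $v=u_j$ at the first such $j$ satisfies $S_{v_k}\le R_n(k)$ for $k<j$ and $S_{v_j}>R_n(j)$. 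A union bound then gives
\[
\mathbb P\big(\exists u\in\mathcal H_{k_n},\ S_{u_k}>R_n(k)\text{ for some }k\le k_n\big)\le\sum_{j=1}^{k_n}\mathbb E\Big(\sum_{v\in\mathcal H_j}\mathbf 1_{\{S_{v_k}\le R_n(k),\,k<j\}}\mathbf 1_{\{S_{v_j}>R_n(j)\}}\Big),
\]
and Proposition~\ref{M} applied to each summand (with the obvious nonnegative $g$) rewrites the right-hand side as $\sum_{j=1}^{k_n}\mathbb E\big(e^{-\theta^*\bar{T}_j}\mathbf 1_{\{\bar{T}_k\le\bar{R}_n(k),\,k<j\}}\mathbf 1_{\{\bar{T}_j>\bar{R}_n(j)\}}\big)$, where $\bar{R}_n(k):=R_n(k)-kb_nv=-\tfrac{3}{2\theta^*}\log\tfrac{k_nb_n+1}{(k_n-k)b_n+1}+c_n$.

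The point is that $\bar{R}_n$ coincides, up to an additive error $o(1)$, with the logarithmic barrier $f_n$ of Lemma~\ref{10} for $\alpha=\tfrac{3}{2\theta^*}$ shifted upward by $c_n$, and that $e^{-\theta^*f_n(j)}=\big(\tfrac{k_nb_n}{(k_n-j)b_n+1}\big)^{3/2}$. I would then slice the overshoot, writing $\{\bar{T}_j>\bar{R}_n(j)\}=\bigcup_{\ell\ge0}\{\bar{T}_j-\bar{R}_n(j)\in[\ell,\ell+1)\}$; on the $\ell$-th slice $e^{-\theta^*\bar{T}_j}\le e^{-\theta^*(\bar{R}_n(j)+\ell)}$, and Lemma~\ref{10} with the unit interval $[c_n+\ell,c_n+\ell+1]$ and boundary level $x=c_n+\ell+1$ bounds the probability of that slice by $C(1+(c_n+\ell)/\sqrt{b_n})^2/(\sqrt{b_n}\,j^{3/2})$. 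Summing the geometric weight $e^{-\theta^*\ell}$ against this, the resulting series in $\ell$ converges and stays $O(1)$ precisely because $c_n/\sqrt{b_n}\to0$, whence
\[
\mathbb E\big(e^{-\theta^*\bar{T}_j}\mathbf 1_{\{\bar{T}_k\le\bar{R}_n(k),\,k<j\}}\mathbf 1_{\{\bar{T}_j>\bar{R}_n(j)\}}\big)\le C\,e^{-\theta^*c_n}\,\frac{(k_nb_n)^{3/2}}{\sqrt{b_n}\,j^{3/2}\,((k_n-j)b_n+1)^{3/2}}.
\]

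It remains to sum over $j$: splitting at $j=\lfloor k_n/2\rfloor$, on the lower half one uses $(k_n-j)b_n+1\ge\tfrac{1}{2}k_nb_n$ and on the upper half $j\ge\tfrac{1}{2}k_n$ together with $\sum_{m\ge0}(mb_n+1)^{-3/2}\le C$, giving $\sum_{j=1}^{k_n}j^{-3/2}((k_n-j)b_n+1)^{-3/2}\le Ck_n^{-3/2}$. Collecting all factors,
\[
\mathbb P\big(\exists u\in\mathcal H_{k_n},\ S_{u_k}>R_n(k)\text{ for some }k\le k_n\big)\le C\,e^{-\theta^*c_n}\,(k_nb_n)^{3/2}\,b_n^{-1/2}\,k_n^{-3/2}=C\,b_n\,e^{-\theta^*c_n},
\]
which is $o(1)$ by the choice of $c_n$ in~\eqref{equation1551} (for which $c_n\to\infty$ fast enough that $b_ne^{-\theta^*c_n}\to0$, while still $c_n=o(\sqrt{b_n})$).

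Under $\mathbf{(H_2)}$ the same scheme applies: on the event $\mathcal W_n$ one transfers every ballot estimate for $\bar{T}$ to the Gaussian estimate of Lemma~\ref{10} via Theorem~\ref{Komlos}, exactly as in the proof of Lemma~\ref{7bis}; enlarging the intervals by $O(d_n)$ is harmless since $d_n/\sqrt{b_n}\to0$, and $\mathbb P(\mathcal W_n^c)=o(n^{-\gamma})$ is negligible. I expect the main obstacle to be the bookkeeping in the two middle steps: matching the powers of $b_n$ and $k_n$ produced by $e^{-\theta^*f_n}$, by the ballot factor $j^{-3/2}((k_n-j)b_n+1)^{-3/2}$, and by the overshoot series so that everything collapses to $b_ne^{-\theta^*c_n}$, and in particular verifying that the overshoot series converges — which is exactly where the constraint $c_n=o(\sqrt{b_n})$ enters.
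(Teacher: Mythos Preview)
Your argument is correct and follows the paper's route: first-moment bound over the first crossing time, the many-to-one formula, and then the ballot estimate of Lemma~\ref{10} (resp.\ Lemma~\ref{7bis} under $\mathbf{(H_2)}$). The only cosmetic difference is in how the overshoot is handled: the paper conditions on the last increment $\bar T_k-\bar T_{k-1}$ and applies Lemma~\ref{10} at time $k-1$, whereas you slice the overshoot $\bar T_j-\bar R_n(j)\in[\ell,\ell+1)$ and apply the lemma directly at time $j$ with the enlarged level $x=c_n+\ell+1$; the two lead to the same per-term bound and the same sum over $j$. Your observation that the final estimate $C\,b_n\,e^{-\theta^*c_n}\to0$ needs $c_n$ to grow faster than $\tfrac{1}{\theta^*}\log b_n$ is correct and worth stating --- the paper just asserts its analogous sum tends to $0$ without isolating this growth condition, which is harmless only because $c_n$ is a free parameter one may choose accordingly.
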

	\begin{proof}
		Using Markov inequality we get
		\begin{align*}
		&\mathbb{P}(\exists|u|=k_n, S_{u_{k}}>R_n(k),k\leq k_n)\leq\sum_{k\leq  k_n} \mathbb{E}\left(\sum_{|u|=k}\mathbf{1}_{\{ S_{u_k}>R_n(k),S_{u_{j}}\leq R_n(j),j<k\}}\right).
		\end{align*} 
		  By Proposition \ref{M} we have,
		\begin{align}
		&\nonumber\sum_{k\leq  k_n} \mathbb{E}\left(\sum_{|u|=k}\mathbf{1}_{\{ S_{u_k}>R_n(k),S_{u_{j}}\leq R_n(j),j<k\}}\right)\\&\nonumber\leq\sum_{k\leq k_n}\mathbb{E}\left(\exp(-\theta^{*}\bar{T}_{k})\mathbf{1}_{\{\bar{T}_{k}>R_n(k)-kb_nv,\bar{T}_{j}\leq R_n(j)-jb_nv,j<k \}}\right)\\ &\label{t}\leq e^{-\theta^{*}c_n}\sum_{k\leq k_n} \frac{(k_nb_n+1)^{\frac{3}{2}}}{(k_n-k)b_n+1)^{\frac{3}{2}}}\mathbb{P}(\bar{T}_{k}>\bar{R}_n(k),\bar{T}_{j}\leq \bar{R}_n(j),j< k ),
		\end{align}
		where $\bar{R}_n(j)= R_n(j)-jb_nv$, for all $1\leq j\leq k_n$. We compute this probability by conditioning  with respect to the last step $\bar{T}_k-\bar{T}_{k-1}$ to get 
		\begin{align*}
		\label{12}
		&\mathbb{P}(\bar{T}_{k}>\bar{R}_n(k),\bar{T}_{j}\leq \bar{R}_n(j),j\leq k )= \mathbb{E}(f_{k-1}(\bar{T}_k-\bar{T}_{k-1}))
		\end{align*}
		where, $\forall y\in{\mathbb{R}}$
		\begin{align*}
	&f_{k-1}(y)=\mathbb{P}\left(\bar{R}_n(k)-y\leq \bar{T}_{k-1}\leq \bar{R}_n(k) ,B_n(j)\leq\frac{1}{\sqrt{b_n}}\bar{R}_n(j)),j\leq k-1\right).
		\end{align*}
			Assume that $\mathbf{(H_1)}$ or $\mathbf{(H_2)}$ hold, by Lemma \ref{10} or \ref{7bis} and using the fact that $\frac{c_n}{\sqrt{b_n}}\to_{n\to\infty}0$, we deduce that, for $n$ large enough, there exists $C>0$ such that, $\forall y\in{\mathbb{R}}$
		\begin{align*} 
		&f_{k-1}(y)\leq C\mathbf{1}_{\left\{y\geq 0\right\}}\frac{(1+\frac{y}{\sqrt{b_n}})^{3}}{k^{3/2}}.
		\end{align*}
		 Now  Plugging this in \eqref{t} we obtain
		\begin{align*}
		&\mathbb{P}(\exists |u|=k_n, S_{u_k}>R_n(k),k\leq k_n)\\&\leq C e^{-\theta^{*}c_n}\sum_{k\leq k_n} \frac{(k_nb_n+1)^{\frac{3}{2}}}{((k_n-k)b_n+1)^{\frac{3}{2}}}\frac{1}{k^{\frac{3}{2}}}(1+ \mathbb{E}(\frac{\bar{T_k}-\bar{T}_{k-1}}{\sqrt{b_n}})^3_{+})\to_{n\to\infty}0.
		\end{align*}   
	 completing the proof.
\end{proof}
 From this lemma we deduce  that  the extremal process $\mathcal{E}_{n,R_n}$ has the same weak limit of the one of $\mathcal{E}_{n}$. 
 The second step of the proof of the main theorem is to locate the paths of extremal individuals. To do that we will consider a barrier which is lower.  For the choice of such  barrier we refer to the work of  Arguin, Bovier and Kistler \cite{zbMATH05975783} in the case of branching Brownian motion. %Their method is based on the mechanism  of entropic repulsion which is used in the statistical mechanics of membrane models, see \cite{zbMATH05728589}.
	\begin{proposition}
		\label{pro11}
		Define the  barrier $$F_n(k)=kb_nv+\frac{k}{k_n}a_n-c_n\mathbf{1}_{k\ne 0,k_n},  k=0..., k_n.$$ 
		Let $A=[a,\infty)$ where $a\in{\mathbb{R}}$, then we have $$ \lim_{n\to \infty }\mathbb{E}(\mathcal{E}_{n,R_n}(A)-\mathcal{E}_{n,F_n}(A))=0.$$
	\end{proposition}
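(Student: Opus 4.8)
The plan is to show that the two modified extremal processes $\mathcal{E}_{n,R_n}$ and $\mathcal{E}_{n,F_n}$, restricted to a set $A=[a,\infty)$, have the same expected number of points asymptotically; since $F_n\le R_n$ (as $F_n$ lies on a line between the endpoints while $R_n$ bulges upward), we have $\mathcal{E}_{n,F_n}(A)\le\mathcal{E}_{n,R_n}(A)$, so it suffices to bound $\mathbb{E}(\mathcal{E}_{n,R_n}(A)-\mathcal{E}_{n,F_n}(A))$ from above by $o(1)$. The difference counts particles $u\in\mathcal{H}_{k_n}$ with $S_u-m_n\ge a$ that stay below $R_n$ along their whole path but cross $F_n$ at some intermediate generation. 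First I would apply the many-to-one formula (Proposition \ref{M}) to write
\begin{align*}
\mathbb{E}(\mathcal{E}_{n,R_n}(A)-\mathcal{E}_{n,F_n}(A))
=\mathbb{E}\!\left(e^{-\theta^{*}\bar{T}_{k_n}}\mathbf{1}_{\{\bar{T}_{k_n}\ge a_n+a\}}\mathbf{1}_{\{\bar{T}_k\le\bar R_n(k),\,k\le k_n\}}\mathbf{1}_{\{\exists j:\ \bar{T}_j>\bar F_n(j)\}}\right),
\end{align*}
where $\bar R_n(k)=R_n(k)-kb_nv$ and $\bar F_n(k)=F_n(k)-kb_nv=\frac{k}{k_n}a_n-c_n\mathbf{1}_{k\ne0,k_n}$.

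Next I would decompose over the first generation $j$ at which the walk exceeds $\bar F_n$, and apply the Markov property at that time. Before generation $j$ the walk stays below $\bar F_n$; from generation $j$ onward it stays below $\bar R_n$ and lands near $a_n$. The key point is that $\bar R_n(j)-\bar F_n(j)$ is of order $\log$ (more precisely $\bar R_n(k)-\bar F_n(k)\approx\frac{3}{2\theta^*}\log(\tfrac{k_nb_n}{(k_n-k)b_n+1})-\frac{k}{k_n}a_n+2c_n$, which, using $a_n=-\frac{3}{2\theta^*}\log n+\frac{\log b_n}{\theta^*}$ and $n\approx k_nb_n$, stays comparable to $\log$-type quantities), so the "gap" the walk must traverse between the two barriers is small compared with the typical fluctuation scale $\sqrt{kb_n}$. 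I would then use the random-walk estimates already established: Lemma \ref{MA} (or Lemma \ref{MAbis} under $\mathbf{(H_2)}$) to control the probability of staying above a slowly-growing boundary after time $j$, Lemma \ref{10} (or Lemma \ref{7bis}) for the barrier-with-endpoint-in-an-interval estimates, together with the exponential weight $e^{-\theta^*\bar T_{k_n}}$ contributing the factor $e^{-\theta^*(a_n+a)}=C n^{3/2}b_n^{-1}e^{-\theta^*a}$ on the event $\{\bar T_{k_n}\ge a_n+a\}$, as in the proof of Lemma \ref{lemma 14}. Summing over $j$ should produce a bound of the form $C\sum_{j\le k_n}$ of a summable-in-$j$ term times a factor that vanishes; the point is that requiring the walk to touch $\bar F_n$ and then survive below $\bar R_n$ down to the end forces it through a narrow window, costing an extra polynomial factor in $b_n$ (coming from the $\log$-sized gap relative to $\sqrt{b_n}$) which tends to $0$ because $b_n\to\infty$ (resp. $b_n/\log(n)^2\to\infty$ under $\mathbf{(H_2)}$).

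The main obstacle I expect is the bookkeeping around the barrier near the two endpoints $k=0$ and $k=k_n$, where $F_n$ has the $-c_n$ shift removed and the estimates of Lemmas \ref{10}, \ref{4}, \ref{Stone barrier} degenerate (the prefactors $\frac{1}{k^{3/2}}$ and $\frac{1}{(k_n-k)^{3/2}}$ blow up); one must argue separately that no extremal particle crosses $F_n$ in the first or last few generations — at the top this is where Lemma \ref{Stone barrier} / Corollary \ref{stonebis} will be invoked to show the endpoint contribution is exactly the limiting one, and at the bottom a crude first-moment bound using $Z_{b_n}/m^{b_n}\to Z_\infty$ suffices. A secondary technical point is handling the two cases $\mathbf{(H_1)}$ and $\mathbf{(H_2)}$ uniformly: under $\mathbf{(H_2)}$ one first passes to the KMT-coupled Gaussian walk on the event $\mathcal{W}_n$ (whose complement has probability $o(n^{-\gamma})$, absorbed trivially), replacing barriers by $d_n$-thickened versions, and then runs the Gaussian argument; since $d_n/\sqrt{b_n}\to0$ the thickening does not affect the leading order. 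Once the expected difference is shown to be $o(1)$, the proposition follows.
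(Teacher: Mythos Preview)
Your outline matches the paper's proof at the structural level: many-to-one, then a decomposition over the crossing time $j$ of the lower barrier $\bar F_n$, then Markov at time $j$, then random-walk estimates for the two pieces. Your observation that the gap $\bar R_n(j)-\bar F_n(j)$ is only of logarithmic size is exactly the point. However, the proposal is too vague precisely at the step that carries the whole argument, and the specific lemmas you name would not close the gap as stated.

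What is needed is that the product of the two pieces decays like $j^{-3/2}(k_n-j)^{-3/2}$ (up to poly-log factors and a $1/b_n$), so that after multiplying by $e^{-\theta^*a_n}\asymp n^{3/2}/b_n\asymp k_n^{3/2}/\sqrt{b_n}$ the sum over $j$ still goes to zero. Lemma~\ref{MA} alone only yields $j^{-1/2}$ or $(k_n-j)^{-1/2}$; to get the extra $-1$ in each exponent one needs the \emph{joint} constraint ``stay below the barrier and end in a window of width $O(\log)$''. The paper obtains the $(k_n-j)^{-3/2}$ factor by applying Lemma~\ref{10} (or Lemma~\ref{7bis} under $(\mathbf H_2)$) to the post-$j$ walk. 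For the pre-$j$ piece the paper does \emph{not} apply Lemma~\ref{10} directly but instead inserts a further Markov split at $l=\lfloor j/3\rfloor$ together with a time-reversal $\widehat T_k=\bar T_{j-l}-\bar T_{j-l-k}$: this turns the constraint $\bar T_k\le\bar R_n(k)$ near the right endpoint into a constraint $\widehat T_k\ge F_n(j)-R_n(j-k)$, which is then shown to be bounded below by $-C\log(j\wedge(k_n-j+1))-2c_n$, so that Lemma~\ref{MA} applies and yields the missing $j^{-1/2}$; combined with Lemma~\ref{MA} on $[0,l]$ and a local-limit bound on the middle third one gets $j^{-3/2}$ with a $(\log)^2$ prefactor. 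This time-reversal trick is the main technical device you are missing.

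Two further points. First, your worry about the endpoints $k=0,k_n$ and the plan to invoke Lemma~\ref{Stone barrier}/Corollary~\ref{stonebis} there is misplaced: those lemmas play no role in this proposition; the sum over $j$ is handled uniformly and the apparent blow-up of $j^{-3/2}$ or $(k_n-j)^{-3/2}$ is compensated because at the extremes the width $\bar R_n(j)-\bar F_n(j)$ is $O(c_n)$, not $O(\log k_n)$. Second, under $(\mathbf H_2)$ the paper does not run KMT inside this proof; it simply replaces Lemmas~\ref{MA}, \ref{10} by their $(\mathbf H_2)$-counterparts Lemmas~\ref{MAbis}, \ref{7bis} (which already absorb the $d_n$-thickening), producing an extra harmless factor $(1+2d_n)/\sqrt{b_n}\to 0$.
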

	\begin{proof}  Let the following subsets of $\mathcal{H}_{k_n}$ $$\mathbf{A}^{(u)}_{n}=\left\{u\in{\mathcal{H}_{k_n}}:  
		S_{k_n}-m_n \in{A}, S_k\leq R_n(k), k\leq k_n,\right\}$$  the set of particles at generation $k_n$ that are close to the maximum and that stay below the barrier $k\mapsto R_n(k)$ for all $k\leq k_n$. Respectively we introduce  $$\mathbf{B}^{(u)}_{n}=\left\{u\in{\mathcal{H}_{k_n}}:S_{k_n}-m_n \in{A}, S_k\leq F_n(k), k\leq k_n \right\}.$$ Set the integer-valued variable $$\#(\mathbf{A}^{(u)}_n \cap (\mathbf{B}^{(u)}_n)^{c})=\#\left\{ u\in{\mathcal{H}_{k_n}}: S_{k_n}-m_n \in{A}, S_k\leq R_n(k), k\leq k_n,\exists j\leq k_n,S_j>F_n(j) \right\}.$$ Using the fact that  $\mathbf{B}^{(u)}_n\subset \mathbf{A}^{(u)}_{n}$ we deduce that
		\begin{align*}
		&\mathbb{E}(\mathcal{E}_{n,R_n}(A)-\mathcal{E}_{n,F_n}(A))=\mathbb{E}(\#(\mathbf{A}^{(u)}_n \cap (\mathbf{B}^{(u)}_n)^c))\\&= \mathbb{E}\left(\sum_{|u|=k_n}\mathbf{1}_{\{S_{k_n}-m_n \in{A}, S_k\leq R_n(k), k\leq k_n,\exists j\leq k_n,S_j>F_n(j)\}}\right),
		\end{align*}
	then thanks to Proposition \ref{M} we obtain
		\begin{align*}
		&\mathbb{E}\left(\sum_{|u|=k_n}\mathbf{1}_{\{S_{k_n}-m_n \in{A}, S_k\leq R_n(k), k\leq k_n,\exists j\leq k_n,S_j>F_n(j)\}}\right)\\&=\mathbb{E}\left(e^{-\theta^{*}\bar{T}_{k_n}}\mathbf{1}_{\{\bar{T}_{k_n}-a_n \in{A}, \bar{T}_k\leq \bar{R}_n(k), k\leq k_n,\exists j\leq k_n,\bar{T}_j>\bar{F}_n(j)\}}\right),	
		\end{align*}	where $a_n=m_n-k_nb_nv$ and $\bar{F}_n(j)=\frac{j}{k_n} a_n-c_n\mathbf{1}_{j\ne 0,k_n}$ for all $1\leq j\leq k_n$.
		Summing with respect to the value of $T_{k_n}-a_n-a$ at time $k_n$, we have 
	\begin{align*}
	    &\mathbb{E}\left(e^{-\theta^{*}\bar{T}_{k_n}}\mathbf{1}_{\{\bar{T}_{k_n}-a_n \in{A}, \bar{T}_k\leq R_n(k)-kb_nv, k\leq k_n,\exists j\leq k_n,\bar{T}_j>\bar{F}_n(j)\}}\right)\\&\leq\frac{Cn^{\frac{3}{2}}}{{b_n}}\sum_{r\geq 0} e^{-\theta^*r}\sum_{0\leq j\leq k_n}\mathbb{P}\left(\bar{T}_{k_n}-a_n-a\in{[r,r+1]},\bar{T}_k\leq \bar{R}_n(k),k\leq k_n,\bar{T}_j>\bar{F}_n(j)\right).
	\end{align*}
		Applying the Markov property at time $j$ we get 
		\begin{align}
		&\nonumber\mathbb{P}\left(\bar{T}_{k_n}-a_n -a\in{[r,r+1]}, \bar{T}_k\leq \bar{R}_n(k), k\leq k_n,\bar{T}_j>\bar{F}_n(j)\right)\\&%\label{eq41}
		\leq \mathbb{P}\left( \bar{T}_k\leq \bar{R}_n(k), k\leq j,\bar{T}_j>\bar{F}_n(j)\right)\times\\&	\sup_{x\in{[F_n(j),R_n(j)]}}\mathbb{P}_x\left(\bar{T}_{k_n-j}-a_n -a\in{[r,r+1]}, \bar{T}_k\leq \bar{R}_n(k+j), k\leq k_n-j\right).
	\end{align}
 To bound the probability \eqref{eq41}, we apply the Markov property at time $l=[\frac{j}{3}]$,	
		\begin{align*}
		&\mathbb{P}(\bar{T}_{j}>\bar{F}_n(j),\bar{T}_{k}\leq \bar{R}_n(k),k\leq j )\\&\leq \mathbb{P}(\bar{T}_{k}\leq \bar{R}_n(k),k\leq l)\sup_{z\leq \bar{R}_n(l)}\mathbb{P}_z\left(\bar{T}_{j-l}>\bar{F}_n(j),\bar{T}_{k}\leq `\bar{R}_n(k+l), k\leq j-l \right).
		\end{align*}
		Set $\widehat{T_k}=\bar{T}_{j-l}-\bar{T}_{j-l-k}$, which is a random walk with the same law as $\bar{T_k}$. Then we obtain 
		\begin{align}
		&\nonumber\mathbb{P}_z\left(\bar{T}_{j-l}>\bar{F}_n(j),\bar{T}_{k}\leq \bar{R}_n(k+l), k\leq j-l \right)\\&\label{850}\leq \mathbb{P}_z\left(
		\bar{F}_n(j)<\bar{T}_{j-l}\leq \bar{R}_n(j),\bar{T_k}\geq F_n(j)-R_n(j-k),k\leq j-l\right).
		\end{align}
	   We 
		 bound the probability in $\eqref{850}$. We use a lower bound for the expression $(F_n(j)-R_n(j-k),k\leq j-l)$. Observe that  the function $ x\mapsto \frac{\log(x)}{x}$ is decreasing for $x\geq e$, and $$(k_n-j)b_n+1+kb_n\leq 2((k_n-j)b_n+1)kb_n,$$  then we have
	\begin{align*} 
	&F_n(j)-R_n(j-k)=\frac{-3}{2\theta^{*}}\left(\frac{j}{k_n}\log(k_nb_n)-\log(k_nb_n)+\log((k_n-j+k)b_n+1)\right)+\frac{\ln(b_n)}{\theta^*}-2c_n\\&\geq\frac{-3}{2\theta^{*}}\left(\log((j\lor e)b_n)-\log(k_nb_n)+\log((k_n-j)b_n+1)+\log(kb_n)+\log(2)\right)-2c_n\\&\geq \frac{-3}{2\theta^{*}}\left(\log((j\lor e)\land((k_n-j)+1))+\log(kb_n)+\log(2)\right)-2c_n , \forall k,j=1....k_n.
	\end{align*}		
	Applying again the Markov property at time $l$
	we get 
	\begin{align*}
	&\mathbb{P}_z\left(
	\bar{F}_n(j)<\bar{T}_{j-l}\leq \bar{R}_n(j),\bar{T_k}\geq F_n(j)-R_n(j-k),k\leq j-l \right)\\&\leq \mathbb{P}\left(\bar{T}_k\geq \frac{-3}{2\theta^{*}}\left(\log(kb_n)+\log(j\land((k_n-j)+1))+\log(2)\right)-2c_n ,k\leq l\right)\times& \\& \mathbb{P}_x\left(
	\bar{F}_n(j)-z<\bar{T}_{j-2l}\leq \bar{R}_n(j)-z\right).
	\end{align*}
		Assume that $\mathbf{(H_1)}$ or $\mathbf{(H_2)}$ hold,
		then by Lemmas \ref{MA}, \ref{Stone} or \ref{MAbis} for $n$ large enough  we obtain
		\begin{align*}
		&\mathbb{P}\left(\bar{T}_k\leq  \bar{R}_n(k), k\leq j,\bar{T}_j>\bar{F}_n(j)\right)\\&\underbrace{\leq \mathbb{P}\left(B_n(k)\leq\frac{1}{\sqrt{b_n}}\bar{R}_n(k),k\leq l \right)}_{\leq \frac{C}{\sqrt{j}}}	\times& \\&\underbrace{\mathbb{P}\left(B_n(k)\geq \frac{-3}{2\theta^{*}}\left(\log(j\land((k_n-j)+1))+\log(kb_n)+\log(2)\right)-2\frac{c_n}{\sqrt{b_n}} ,k\leq l\right)}_{\leq C\frac{1+\log(j\land((k_n-j)+1))}{\sqrt{j}}}\times&\\&\underbrace{\mathbb{P}_x\left(
		\frac{1}{\sqrt{b_n}}(\bar{F}_n(j)-z)<B_n(j-2l)\leq	\frac{1}{\sqrt{b_n}}( \bar{R}_n(j)-z)\right)}_{\leq C\frac{\log(j\land((k_n-j)+1))+1}{\sqrt{jb_n}}}\leq C\frac{ (1+(\log(j\land((k_n-j)+1)))^2}{\sqrt{b_n}j^{\frac{3}{2}}}.
		\end{align*}
		Using Lemma \ref{10} or \ref{7bis}, for $n$ large enough we have
		\begin{align*}
		 &\sup_{x\in{[F_n(j),R_n(j)]}}\mathbb{P}_x\left(\bar{T}_{k_n-j}-a_n-a\in{[r,r+1]}, \bar{T}_k\leq \bar{R}_n(k+j), k\leq k_n-j\right)\\&\leq C \frac{(1+2d_n)}{\sqrt{b_n}}\frac{ (1+\log(j\land((k_n-j)+1)))^2}{(k_n-j)^{\frac{3}{2}}},
		\end{align*}
		therefore, we conclude that 
		\begin{align*}
		&\mathbb{E}(\mathcal{E}_{n,R_n}(A)-\mathcal{E}_{n,F_n}(A))\leq C \frac{(1+2d_n)}{\sqrt{b_n}} k_n^{\frac{3}{2}}
		\sum_{j\leq k_n}\frac{(1+(\log(j\land((k_n-j)+1))))^4}{((k_n-j)+1)^{\frac{3}{2}}j^{\frac{3}{2}}}\\&\leq 2C\frac{(1+2d_n)}{\sqrt{b_n}}
		\sum_{j\leq [k_n/2]}\frac{(1+\log(j))^4}{j^{\frac{3}{2}}} \to_{n\rightarrow \infty} 0,
		\end{align*}
		as 	$\sum_{j\leq [k_n/2]}\frac{(1+\log(j))^4}{j^{\frac{3}{2}}}<\infty$.
	\end{proof}
	This lemma implies that the two extremal processes $\mathcal{E}_{n,F_n}$ and $\mathcal{E}_{n,R_n}$ have the same weak limit.  Consequently, the the same as  the o ne of $\mathcal{E}^{b_n}_{n}$ .	
	For $u\in{\mathcal{T}^{(n)}}$, we introduce
	$$H_{n}(u)=\{S_{u_k}\leq F_n(k), k\leq k_n \}$$  the set of individuals satisfying the $F_n$-barriers. The last step of the proof of our result is to show that, with high probability, the set of pairs of extremal particles that branch off at time $k\geq 1$ and stay all the time below the barrier $k\mapsto F_n(k)$ is vanishing in the large $n$-limit. This show that all particles contributing in the extremal process split from the root. 
	\begin{lemma}
		\label{11}
		With the same notation used before, we have, $$ \lim_{n\to \infty}\mathbb{E}\left(\#\{(u,v), |u\land v|\geq 1,H_n(u),H_n(v),S_u-m_n \in{A},S_v-m_n \in{A}\}\right)=0.$$  
	\end{lemma}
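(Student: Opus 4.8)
The plan is to split the count according to the overlap $\ell=|u\wedge v|$, which ranges over $\{1,\dots,k_n-1\}$ since $u\ne v$, and to bound each term by a many-to-two (spine) computation, so that the whole expectation becomes a convergent series multiplied by a prefactor that vanishes as $b_n\to\infty$. Write the expectation as $\sum_{\ell=1}^{k_n-1}\mathbb{E}[N_\ell]$, where $N_\ell$ counts the pairs in the event of the lemma that split at level $\ell$. For a fixed $\ell$ such a pair $(u,v)$ shares the ancestor $w=u_\ell=v_\ell\in\mathcal H_\ell$, while $u_{\ell+1}$ and $v_{\ell+1}$ are distinct level-$(\ell+1)$ children of $w$; conditionally on $w$ the subtrees rooted at these two children are independent copies of a truncation of $\mathcal T^{(n)}$, and the conditional mean number of ordered pairs of distinct children of $w$ equals $\mathbb{E}[Z_{b_n}(Z_{b_n}-1)]\le\mathbb{E}[Z_{b_n}^2]\le C\,m^{2b_n}$ uniformly in $n$, by \eqref{F} together with the classical Galton--Watson variance formula — this is exactly where the hypothesis $\mathbb{E}(Z_1^2)<\infty$ is used. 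Combining this with Proposition~\ref{M}, applied once to the shared spine up to generation $\ell$ and once to each of the two continuations, one obtains a many-to-two bound of the form
\[
\mathbb{E}\big[N_\ell\big]\ \le\ C\sum_{r,r'\ge a}e^{-\theta^*(r+r')}\,\mathbb{E}\Big[e^{-\theta^*\bar T_\ell}\,\mathbf{1}_{\{\bar T_j\le\bar F_n(j),\,j\le\ell\}}\,\Psi_{\ell,r}(\bar T_\ell)\,\Psi_{\ell,r'}(\bar T_\ell)\Big],
\]
where $\bar F_n$ is the centred barrier of Proposition~\ref{pro11} and $\Psi_{\ell,r}(y)$ is the weighted probability that a continuation of $k_n-\ell$ macro-steps started at height $y$ stays below $F_n$ and ends in $m_n+[r,r+1]$; the factor $e^{+\theta^*S_w}$ coming from the doubled count of the shared part cancels one copy of $e^{-\theta^*S_w}$, leaving the single weight $e^{-\theta^*\bar T_\ell}$, and the powers of $n,b_n,k_n$ are tracked exactly as in the first-moment estimate behind Lemma~\ref{Stone barrier}.

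The next step is to estimate the three factors with the random-walk lemmas of Section~3. The shared part, i.e.\ $\mathbb{E}[e^{-\theta^*\bar T_\ell}\mathbf{1}_{\{\bar T_j\le\bar F_n(j),\,j\le\ell\}}(\cdots)]$ together with the requirement that $w$ lie in the window where extremal trajectories pass, is bounded by Lemma~\ref{10} under $(\mathbf H_1)$ and by Lemma~\ref{7bis} under $(\mathbf H_2)$, by $C\,(1+\log(\ell\wedge(k_n-\ell)))^{c}\big/\big(\sqrt{b_n}\,\ell^{3/2}\big)$. Each continuation $\Psi_{\ell,r}$ is a walk of $k_n-\ell$ macro-steps staying below a logarithmically curved barrier and landing near its top, hence is controlled by Lemma~\ref{Stone barrier} under $(\mathbf H_1)$ and by Corollary~\ref{stonebis} under $(\mathbf H_2)$, after the routine extension of those estimates to walks indexed by $\{\ell,\dots,k_n\}$; each such factor carries an extra $1/\sqrt{b_n}$ beyond the classical $(k_n-\ell)^{-3/2}$, since a macro-step of $\bar T$ is the sum of $b_n$ i.i.d.\ increments and a local-limit estimate over $k_n-\ell$ macro-steps costs $1/\sqrt{(k_n-\ell)b_n}$ rather than $1/\sqrt{k_n-\ell}$. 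This is the structural reason the decoration disappears in the regime $k_n-\ell=O(1)$: the two continuations, being conditionally independent given $w$, each still produce a genuine factor $1/\sqrt{b_n}\to 0$.

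Plugging these estimates back, integrating over the height of the branch point against the renewal function $L$ with which the barrier estimates pair, and summing over $r,r'$ against $e^{-\theta^*(r+r')}$ (a convergent sum), the powers of $n,b_n,k_n$ cancel as in the first-moment identity and one is left with
\[
\mathbb{E}\big[\#\{\dots\}\big]\ \le\ \varepsilon_n\sum_{\ell=1}^{k_n-1}\frac{\big(1+\log(\ell\wedge(k_n-\ell))\big)^{c}}{\big(\ell\wedge(k_n-\ell)\big)^{3/2}}\ \le\ C\,\varepsilon_n,
\]
where $\varepsilon_n=O\big((1+d_n)/\sqrt{b_n}\big)$, with $d_n\equiv0$ under $(\mathbf H_1)$ (no coupling is needed) and $d_n$ the KMT width under $(\mathbf H_2)$. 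Since $\sum_{\ell\ge1}(1+\log\ell)^{c}\ell^{-3/2}<\infty$, the right-hand side tends to $0$: under $(\mathbf H_1)$ because $b_n\to\infty$, and under $(\mathbf H_2)$ because $d_n$ is chosen with $d_n/\log n\to\infty$ and $d_n^2/b_n\to 0$, the latter being possible precisely because $b_n/\log(n)^2\to\infty$. This is exactly where the strengthened assumption on $b_n$ under $(\mathbf H_2)$ enters, and the lemma follows.

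The hard part will be the second step: making the barrier local-limit estimates of Section~3 hold \emph{uniformly} in the branch-point height and in the splitting level $\ell$, treating on the same footing the regime $\ell=O(1)$ (branching near the root) and the regime $k_n-\ell=O(1)$ (branching near the leaves, responsible for the decoration in the classical branching random walk), and checking in the latter regime that the two continuations genuinely contribute two independent factors $1/\sqrt{b_n}$ — this, and not any truncation argument, is what makes the decoration vanish here.
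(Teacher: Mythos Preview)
Your overall plan --- decompose by the split level $\ell$, run a many-to-two computation, and bound the shared spine and the two continuations separately with the barrier estimates of Section~3 --- is exactly the paper's approach. But your accounting of where the vanishing factor $\varepsilon_n=O((1+d_n)/\sqrt{b_n})$ comes from is wrong, and this is the heart of the lemma.

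You claim each continuation $\Psi_{\ell,r}$ carries an extra $1/\sqrt{b_n}$ beyond the classical $(k_n-\ell)^{-3/2}$, and that this is ``the structural reason the decoration disappears''. It is not. The continuation is a many-to-one expectation carrying the weight $e^{-\theta^*\bar T_{k_n-\ell}}$; at the endpoint near $a_n$ this produces a factor $e^{-\theta^* a_n}=n^{3/2}/b_n$, and combined with the local-limit bound $\le C(1+d_n)/(\sqrt{b_n}(k_n-\ell)^{3/2})$ one gets exactly $k_n^{3/2}/(k_n-\ell)^{3/2}$, with no leftover $1/\sqrt{b_n}$. In the paper the genuine vanishing factor comes instead from the \emph{shared} spine: after the two continuations contribute $e^{2\theta^*(S_w-\ell b_n v)}$ and the spine's many-to-one contributes $e^{-\theta^*\bar T_\ell}$, one is left with a \emph{positive} weight $e^{+\theta^*\bar T_\ell}$, which forces $\bar T_\ell$ into a unit window near $\bar F_n(\ell)$; applying Lemma~\ref{10}/\ref{7bis} to that window is where the surviving $1/\sqrt{b_n}$ appears, together with $e^{-\theta^* c_n}$ from the barrier offset (which you omit entirely). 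If you track the factors as you describe, you will find either a spurious extra $1/b_n$ or a missing $e^{+\theta^*\bar T_\ell}$ in your many-to-two display.

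Two smaller points. First, for the continuations the paper uses the crude upper bound of Lemma~\ref{10}/\ref{7bis}, not Lemma~\ref{Stone barrier} or Corollary~\ref{stonebis}; the latter are asymptotics stated only for walks of the full length $k_n$, and the ``routine extension'' you invoke to arbitrary lengths $k_n-\ell$ is neither routine nor needed. Second, your logarithmic factors $(1+\log(\ell\wedge(k_n-\ell)))^c$ are spurious here: they arise in Proposition~\ref{pro11} because the barrier $R_n$ has a logarithmic curvature, but in this lemma the barrier $F_n$ is linear and Lemma~\ref{10} gives a clean $1/(\sqrt{b_n}\,\ell^{3/2})$.
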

	\begin{proof}
		
		By considering the positions of any pairs of individuals $(u,v)$ at the generation $k_n$ and at their common ancestors $u\land v$ we have

	\begin{align*}
	&\mathbb{E}\left(\#\{(u,v), |u\land v|\geq 1, S_u- m_n \in{A}, S_v-m_n\in{A},H_n(u),H_n(v)\}\right){}\\& =\mathbb{E}\left(\sum_{j=1}^{k_n-1}\sum_{|w|=j}\mathbf{1}_{\{S_{w_i}\leq F_n(i), i\leq j \}} \sum_{(u_{j+1},v_{j+1})}\sum_{(u,v)}\mathbf{1}_{\{S_u-m_n \in{A}, S_v-m_n\in{A},S_{u_k}\leq F_n(k),S_{v_k}\leq F_n(k),j+1 \leq k \leq k_n\}}\right)
	\end{align*}

		where the double sum $\sum_{(u_{j+1},v_{j+1})}$ is over pairs $(u_{j+1},v_{j+1})$ of distinct children of $w=u\land v$ and  $\sum_{(u,v)}$ is over pairs $(u,v)$ such that $|u|=|v|=k_n$ and $u$ is a descendant of $u_{j+1}$, and $v$ is a descendant of $v_{j+1}$ .  
		Applying the Markov property at time $j+1$ we get   
		\begin{align}
	\nonumber&\mathbb{E}\left(\#\{(u,v), |u\land v|\geq 1, S^{b_n}_u- m_n \in{A}, S^{b_n}_v-m_n\in{A},H_n(u),H_n(v)\}\right)\\&\label{q}\leq \mathbb{E}\left(\sum_{j=1}^{k_n-1}\sum_{|w|=j}\mathbf{1}_{\{S_{w_i} \leq F_n(i) , i\leq j \}}\sum_{(u_{j+1},v_{j+1})}\mathbf{1}_{\{S_{u_{j+1}}\leq F_n(j+1), S_{v_{j+1}}\leq F_n(j+1)\}}\phi_{j,n}(S_{u_{j+1}})\phi_{j,n}(S_{v_{j+1}})\right),
		\end{align}
		where $$ \phi_{j,n}(z)=\mathbb{E}\left(\sum_{|u|=k_n-j-1}\mathbf{1}_{\{z+S_{u}-m_n \in{A},S_{u_{k}}+z\leq F_n(j+k+1),k\leq k_n-j-1\}}\right). $$
		Now using  Proposition \ref{M} we obtain,  
		\begin{align*}
		&\phi_{j,n}(z)= \mathbb{E}\left(e^{-\theta^{*} \bar{T}_{k_n-j-1}} \mathbf{1}_{\{z+\bar{T}_{k_n-j-1}- m_n+(k_n-j-1)b_nv \in{A}, \bar{T}_{k}+z\leq F_n(j+k+1)-kb_nv,k\leq k_n-j-1\}}\right).
		\end{align*}
		Summing with respect to the value of $\bar{T}_{k_n-j-1}-m_n+(k_n-j-1)b_nv$ we have 
\begin{align}
    \nonumber&\phi_{j,n}(z) \leq \frac{C   
	n^{\frac{3}{2}}}{b_n}e^{\theta^{*}(z-(j+1)b_nv)}\sum_{h\geq 0} e^{-\theta^*h}
 \\&\label{1}\times\mathbb{P}_{z-(j+1)b_nv}\left(\bar{T}_{k_n-j-1}-a_n-a \in{[h,h+1]}, \bar{T}_{k}\leq F_n(j+k+1)-kb_nv,k\leq k_n-j-1\right).
\end{align}
	Note that If $\mathbf{(H_2)}$ holds by Theorem \ref{Komlos}, we bound the quantity \eqref{1} by
\begin{align*}		
&\mathbb{P}_z\left(\bar{T}_{k_n-j-1}- m_n+( k_n-j-1)b_nv -a\in{[h,h+1]}, \bar{T}_{k}\leq F_n(j+k+1)-kb_nv,k\leq k_n-j-1\right)\\&\leq \mathbb{P}_{z-(j+1)b_n}\left(\mathcal{D}^{(k)}_n\right),
\end{align*}
 where
 \begin{align*}
    & \mathcal{D}^{(k)}_n={\left\{\sqrt{b_n}\hat{S}_{k_n-j-1}-a_{k_n-j-1}-a \in[h-d_n,h+1+d_n],\sqrt{b_n}\hat{S}_{k}\leq \bar{F}_n(k)+d_n, ,k\leq k_n-j-1\right\}}.
 \end{align*} 
 Now assume that either $\mathbf{(H_1)}$ or $\mathbf{(H_2)}$ hold. 
		Thanks to Lemma \ref{10} or \ref{7bis} we have,
		\begin{align*}
		&\phi_{j,n}(z)\leq C(1+2d_n) k_n^{\frac{3}{2}} e^{\theta^{*}(z-(j+1)b_nv)}\frac{(1-\frac{z-(j+1)b_nv}{\sqrt{b_n}})^{2}}{(k_n-j)^{\frac{3}{2}}}.
		\end{align*}
		 By replacing this in the equation \eqref{q}, we get
		\begin{align*}
		&\mathbb{E}\left(\#((u,v), |u\land v|\geq 1, S^{b_n}_u- m_n \in{A}, S^{b_n}_v-m_n\in{A},H_n(u),H_n(v))\right)\\& \leq C (1+2d_n) k_n^{\frac{3}{2}}  
		\mathbb{E}(\sum_{j=1}^{k_n-1}\frac{1}{(k_n-j)^{\frac{3}{2}}}\sum_{|w|=j}\mathbf{1}_{\{S_{w_i}\leq F_n(i),i\leq j\}}
		\times& \\
		&\sum_{(u_{j+1},v_{j+1})}\mathbf{1}_{\{S_{u_{j+1}}\leq F_n(j+1), S_{v_{j+1}}\leq F_n(j+1)\}} e^{\theta^{*}{S_{u_{j+1}}-(j+1)b_nv 
+S_{v_{j+1}}-(j+1)b_nv}}f_{n,j}(S_{u_{j+1}})f_{n,j}(S_{v_{j+1}}))
		\end{align*}
		where
		$f_{n,j}(u) =  
		(1+\frac{z-(j+1)b_nv}{\sqrt{b_n}})^{2}$.
	In the other hand we can bound the double sum  $$\sum_{(u_{j+1},v_{j+1})}\mathbf{1}_{\{S_{u_{j+1}}\leq F_n(j+1), S_{v_{j+1}}\leq F_n(j+1)\}} e^{\theta^{*}[S_{u_{j+1}}-(j+1)b_nv) 
		+S_{v_{j+1}}-(j+1)b_nv]}f_{n,j}(S_{u_{j+1}})f_{n,j}(S_{v_{j+1}})$$ by
		\begin{align*}
		    &(1+\frac{S_w-jb_nv}{\sqrt{b_n}})^4 e^{2\theta^{*}(S_{w}-jb_nv)}\\&\times  \mathbb{E}\left(\displaystyle\sum_{\substack{|u|=|v|=1\\ u\ne v}} (1+\frac{X^{(n)}_u-b_nv}{\sqrt{b_n}})^2 (1+\frac{X^{(n)}_v-b_nv}{\sqrt{b_n}})^2e^{\theta^{*}(X^{(n)}_{u}-b_nv+X^{(n)}_{v}-b_nv)} \right).
		\end{align*}

		Using independence between $X^{(n)}_u$ and $X^{(n)}_v$ for $u\ne v$ and the fact that $$\mathbb{E}\left(\sum_{|u|=1} (X_u^{(n)}-b_nv)e^{\theta^{*}(X^{(n)}_{u}-b_nv)}\right)=0,$$  we have 
	\begin{align*}
	&\mathbb{E}\left(\displaystyle\sum_{\substack{|u|=|v|=1\\ u\ne v}} (1+\frac{X^{(n)}_u-b_nv}{\sqrt{b_n}})^2 (1+\frac{X^{(n)}_v-b_nv}{\sqrt{b_n}})^2e^{\theta^{*}(X^{(n)}_{u}-b_nv+X^{(n)}_{v}-b_nv)} \right)\\&\leq\mathbb{E}\left(\displaystyle\sum_{\substack{|u|=|v|=1\\ u\ne v}}(1+(\frac{X^{(n)}_u-b_nv}{\sqrt{b_n}})^2)e^{\theta^{*}(X^{(n)}_{u}-b_nv)}(1+(\frac{X^{(n)}_v-b_nv}{\sqrt{b_n}})^2)e^{\theta^{*}(X^{(n)}_{v}-b_nv)} \right),
	\end{align*}
	then conditioning on $Z_{b_n}$ and using the following properties $$ \mathbb{E}\left((X_u^{(n)}-b_nv)^2e^{\theta^{*}(X^{(n)}_{u}-b_nv)}\right)=
	b_n\Lambda^{''}(\theta^{*})=
	b_n\sigma^2 \text{ and } \mathbb{E}\left(e^{\theta^{*}(X^{(n)}_{u}-b_nv)}\right)=m^{-b_n},$$ we obtain
	\begin{align*}
	&\mathbb{E}\left(\displaystyle\sum_{\substack{|u|=|v|=1\\ u\ne v}}(1+(\frac{X^{(n)}_u-b_nv}{\sqrt{b_n}})^2)e^{\theta^{*}(X_{u}-b_nv)}(1+(\frac{X^{(n)}_v-b_nv}{\sqrt{b_n}})^2)e^{\theta^{*}(X^{(n)}_{v}-b_nv)}|Z_{b_n} \right)\\&\leq (1+\sigma^2)^2 Z_{b_n}(Z_{b_n}-1)m^{-2b_n},
	\end{align*} 
	as a consequence, there exists a constant $C>0$ such that 
	\begin{align*}
		&\mathbb{E}\left(\displaystyle\sum_{\substack{|u|=|v|=1\\ u\ne v}}(1+(\frac{X^{(n)}_u-b_nv}{\sqrt{b_n}})^2)e^{\theta^{*}(X_{u}-b_nv)}(1+(\frac{X^{(n)}_v-b_nv}{\sqrt{b_n}})^2)e^{\theta^{*}(X_{v}-b_nv)} \right)\leq C,
	\end{align*}
			which leads to the following inequality 
		\begin{align*}
		&\mathbb{E}\left(\#((u,v), |u\land v|\geq 1, S^{b_n}_u- m_n \in{A}, S^{b_n}_v-m_n\in{A},H_n(u),H_n(v))\right)\\& \leq C (1+2d_n)k_n^{\frac{3}{2}} \mathbb{E}\left(\sum_{j=1} 
		^{k_n-1}\frac{1}{(k_n-j)^{\frac{3}{2}}}\sum_{|w|=j}(1+\frac{S_w-jb_nv}{\sqrt{b_n}})^4e^{2\theta^{*}(S_{w}-jb_nv)}\mathbf{1}_{\{S_{w_i}\leq F_n(i),i\leq j\}}\right).
		\end{align*}
		On the other hand by Proposition \ref{M} we obtain
		\begin{align*}
		&\mathbb{E}\left(\#((u,v), |u\land v|\geq 1, S_u- m_n \in{A}, S_v-m_n\in{A},H_n(u),H_n(v))\right)\\
		&\leq C(1+2d_n) k_n^{\frac{3}{2}} \mathbb{E}\left(\sum_{j=1} 
		^{k_n-1}\frac{1}{(k_n-j)^{\frac{3}{2}}}(1+\frac{\bar{T_j}}{\sqrt{b_n}})^4e^{\theta^{*}\bar{T}_j}\mathbf{1}_{\{\bar{T}_i\leq \bar{F}_n(i), i\leq j \}}\right).
		\end{align*}
		Summing with respect to the values of $\bar{T_j}-\bar{F}_n(j)$, by Lemma \ref{10} or \ref{7bis} and for $n$ large enough  we get
\begin{align*}		
		&\mathbb{E}\left(\#((u,v), |u\land v|\geq 1, S_u- m_n \in{A}, S_v-m_n\in{A},H_n(u),H_n(v))\right)\\&\leq C(1+2d_n) k_n^{\frac{3}{2}}\sum_{j=1} 
		^{k_n-1}\frac{1}{(k_n-j)^{\frac{3}{2}}}\sum_{r=0}^{\infty}\mathbb{E}\left(
		e^{\theta^{*}\bar{T}_j}(1+\frac{\bar{T_j}}{\sqrt{b_n}})^{4}\mathbf{1}_{\{\bar{T}_i\leq \bar{F}_n(i),\bar{T}_j-\bar{F}_n(j) \in{[-r-1,-r]}, i\leq j\}}\right) 
	\\&\leq C(1+2d_n)e^{-\theta^{*}c_n}k_n^{\frac{3}{2}}\sum_{j=1} 
	^{k_n-1}\frac{1}{(k_n-j)^{\frac{3}{2}}}\sum_{r=0}^{\infty}(1+r)^{4}e^{-\theta^{*}r}\\&\times\mathbb{P}\left(\bar{T}_i\leq \bar{F}_n(i),\bar{T}_j-\bar{F}_n(j) 
		\in{[-r-1,-r]},i\leq j\right)\\&\leq \frac{C(1+2d_n)}{\sqrt{b_n}}e^{-\theta^{*}c_n}\sum_{j=1} 
	^{\lfloor\frac{k_n}{2}\rfloor }\frac{k_n^{\frac{3}2{}}}{(k_n-j)^{\frac{3}{2}}j^{\frac{3}{2}}} \to_{n\to\infty}0,
		\end{align*}     
		where we used that $\sum_{j=1}^{\lfloor\frac{k_n}{2}\rfloor}\frac{k_n^{\frac{3}2{}}}{(k_n-j)^{\frac{3}{2}}j^{\frac{3}{2}}} \leq 2^{\frac{3}{2}} \sum_{j=1} ^{\lfloor\frac{k_n}{2}\rfloor} \frac{1}{i^{\frac{3}{2}}}<\infty.$
	\end{proof}
	Now we are ready to prove our main result. 

	\begin{proof}[Proof of Theorem \ref{Theorem}]
	Let $\phi\in {\mathcal{C}^{l,+}_{b}}$, with support $A=[a,\infty)$ where $a\in{\mathbb{R}}$. We have to show that 
	\begin{align}
	\label{T1}
	    &\lim_{n\to \infty}\mathbb{E}\left(e^{-\sum_{u\in{\mathcal{H}_{k_n}}} \phi(S_{u}^{(n)}-m_n)}\right)= \mathbb{E}\left(\exp\left(-Z_{\infty}\frac{1}{\sqrt{2\pi\sigma^2}}\int_{}e^{-\theta^{*}y}(1-e^{-\phi(y)})dy\right)\right).
	\end{align}

	First introduce  $$G_n=\{ {\nexists} (u,v),|u\land v|\geq 1,S_u-m_n\geq a,S_v-m_n\geq a\}.$$
	By Lemma \ref{lemma 14} and Proposition \ref{pro11}, it is enough to prove \eqref{T1} for the extremal process  $$\mathcal{E}_{n,F_n}=\sum_{u\in{\mathcal{H}_{k_n}}}\delta_{S_u-m_n}\mathbf{1}_{H_n}.$$
	where $H_n={\left\{S_{u_{k}}\leq F_n(k), \forall  k\leq k_n\right\}}.$
	Using Lemma \ref{11}, we have $\mathbb{P}(G_n^c)\to_{n\rightarrow \infty}0$, therefore
	
		\begin{align*} 
		&\mathbb{E}\left(\exp-\left(\sum_{u\in{\mathcal{H}_{k_n}}} \phi(S_{u}-m_n)\mathbf{1}_{H_n}\right)\right) \\&=\mathbb{E}\left(\mathbf{1}_{G_n}\exp-\left(\sum_{u\in{\mathcal{H}_{k_n}}} \phi(S_{u_{k_n}}-m_n)\mathbf{1}_{H_n}\right)\right)+o(1)\\&=\mathbb{E}\left(\mathbf{1}_{G_n}\prod_{|w|=1}\left(\exp-\left(\sum \limits_{\underset{|u|=k_n}{u>w}} \phi(S_{u_{k_n}}-m_n)\mathbf{1}_{H_n}\right)\right)\right)+o(1).
	\end{align*}
	 Observe that $\exp{(-\sum_{i=1}^{n}x_i)}=1+\sum_{i=1}^{n}(e^{-x_i}-1) \text{ if  there exists at most $i$ such that }$ $x_i \ne 0$. Hence  using that under $G_n$, for all $w$ at the first generation, at most one descendant reaches level $m_n$, we get, 
	\begin{align*}  	
	&\mathbb{E}\left(\exp-\left(\sum_{u\in{\mathcal{H}_{k_n}}} \phi(S_{u}-m_n)\mathbf{1}_{H_n}\right)\right)\\&=\mathbb{E}\left(\mathbf{1}_{G_n}\prod_{|w|=1}\left(1+\sum \limits_{\underset{|u|=k_n}{u>w}}\left(\exp(-\phi(S_{u}-m_n+S_w))-1\right)\mathbf{1}_{H_n}\right)\right)+o(1)\\&=\mathbb{E}\left(\prod_{|w|=1}\left(1+\sum \limits_{\underset{|u|=k_n}{u>w}}\left(\exp(-\phi(S_{u}-m_n+S_w))-1\right)\mathbf{1}_{H_n}\right)\right)+o(1),
	\end{align*}
	 using again that $G_n^c$ is an event of asymptotically small probability, and that this product of random variable is bounded by $1$. We now apply the Markov property  at time one to obtain 
	\begin{align}
	\label{100}
	&\mathbb{E}\left(\exp-\left(\sum_{u\in{\mathcal{H}_{k_n}}} \phi(S_{u}-m_n)\mathbf{1}_{H_n}\right)\right) =\mathbb{E}\left(\prod_{|w|=1} 1+\psi_n(S_w)\mathbf{1}_{\left\{S_w-b_nv\leq \bar{F}_n(1) \right\}}\right)+o(1),
	\end{align}
	where 
	\begin{align} 
	&\psi_n:x\mapsto \mathbb{E}\left(\sum_{|u|=k_n-1}\left(e^{-\phi(S_{u}-m_n+x)}-1\right)\mathbf{1}_{\{S_{u_{k}}\leq F_n(k+1)-x, k< k_n\}}\right),
	\end{align}
	using Proposition \ref{M} we have,
	\begin{align*}
	&\psi_n(x)=\mathbb{E}\left(e^{-\theta^{*}\bar{T}_{k_n-1}}e^{-\phi(\bar{T}_{k_{n}-1}-\alpha_{b_n}+x)}-1)\mathbf{1}_{\{\bar{T}_{k}\leq \bar{F}_n(k+1)-(x-b_nv),k<k_n\}}\right)
	\end{align*} 
	where $\alpha_{b_n}=b_nv-\frac{3}{2\theta^{*}}\ln(n)+\frac{\ln(b_n)}{\theta^{*}}$.

 Using Lemmas \ref{Stone barrier} or Corollary \ref{stonebis}, depending on whatever we work under $(\mathbf{H}_1)$ or $(\mathbf{H}_2)$, we  obtain the following approximation,
		\begin{align*}
&\mathbb{E}\left(e^{-\theta^{*}\bar{T}_{k_n-1}}e^{-\phi(\bar{T}_{k_{n}-1}-\alpha_{b_n}+x)}-1)\mathbf{1}_{\{\bar{T}_{k}\leq \bar{F}_n(k+1)-(x-b_nv),k<k_n\}}\right)	\\&\sim_{n \to \infty}\frac{1}{\sqrt{2\pi\sigma^2}}L(\frac{-(x-b_nv)}{\sqrt{b_n}})e^{\theta^{*}(x-b_nv)}\int e^{-\theta^{*}y}(e^{-\phi(y)}-1)dy.
      \end{align*}
Plugging this in Equation  \eqref{100} we get
	\begin{align*}
&\mathbb{E}\left(\exp-\left(\sum_{u\in{\mathcal{H}_{k_n}}} \phi(S_{u}-m_n)\mathbf{1}_{H_n}\right)\right)\\&\sim_{n \to \infty}\mathbb{E}\left(\prod_{|w|=1}\mathbb{E}(1+L(-\frac{S_w-b_nv}{\sqrt{b_n}})e^{\theta^{*}(S_w-b_nv)}\mathbf{1}_{\left\{S_w-b_nv\leq\bar{F}_n(1)\right\}}\frac{1}{\sqrt{2\pi\sigma^2}}\int e^{-\theta^{*}y}(e^{-\phi(y)}-1)dy) \right).
	\end{align*}
	Recall that $v=\frac{\kappa(\theta^{*})}{\theta^{*}}$, then by Girsanov transform  we have $$\mathbb{E}\left(L(\frac{-(S_w-b_nv)}{\sqrt{b_n}})e^{\theta^{*}(S_w-b_nv)}\mathbf{1}_{\left\{S_w-b_nv\leq\bar{F}_n(1)\right\}}\right)=m^{-b_n}\mathbb{E}\left(L(\frac{S_1^{(n)}}{\sqrt{b_n}})\mathbf{1}_{\left\{\frac{S_1^{(n)}}{\sqrt{b_n}}\leq \frac{\bar{F}_n(1)}{\sqrt{b_n}}\right\}}\right)$$
	where $S_1^{(n)}$ is a centred random walk with variance $b_n$. Fix $A>0$
	\begin{align*}
	    	&\mathbb{E}\left(L(\frac{-S_1^{(n)}}{\sqrt{b_n}})\mathbf{1}_{\left\{\frac{S_1^{(n)}}{\sqrt{b_n}}\leq\bar{F}_n(1)\right\}}\right)\\&= 	\mathbb{E}\left(L(\frac{-S_1^{(n)}}{\sqrt{b_n}})\mathbf{1}_{\left\{-A\leq \frac{S_1^{(n)}}{\sqrt{b_n}}\leq \frac{\bar{F}_n(1)}{\sqrt{b_n}}\right\}}\right)+	\mathbb{E}\left(L(\frac{-S_1^{(n)}}{\sqrt{b_n}})\mathbf{1}_{\left\{\frac{S_1^{(n)}}{\sqrt{b_n}}\leq -A \right\}}\right).
	\end{align*} 
	As the  function $x \mapsto L(x)\mathbf{1}_{\left\{x \in [-A, \frac{\bar{F}_n(1)}{b_n}]\right\}}$ is bounded, by central limit theorem we have \begin{align}
	\label{mv}
	   \lim_{n\to \infty} \mathbb{E}\left(L(\frac{-S_1^{(n)}}{\sqrt{b_n}})\mathbf{1}_{\left\{-A\leq \frac{-S_1^{(n)}}{\sqrt{b_n}}\leq \frac{\bar{F}_n(1)}{\sqrt{b_n}}\right\}}\right)= \frac{1}{\sqrt{2\pi}}\int_{-A}^{0}L(-y)e^{-\frac{y^2}{2}} dy.
	\end{align} 
Additionally, we have by Equation \eqref{renewl} and the Markov inequality
\begin{align*}
  \left|\mathbb{E}\left(L(\frac{-S_1^{(n)}}{\sqrt{b_n}})\mathbf{1}_{\left\{\frac{S_1^{(n)}}{\sqrt{b_n}}\leq -A \right\}}\right)\right| &\leq C  \left|\mathbb{E}\left((1+\frac{-S_1^{(n)}}{\sqrt{b_n}})\mathbf{1}_{\left\{\frac{S_1^{(n)}}{\sqrt{b_n}}\leq -A \right\}}\right)\right|\\& \leq C\frac{\mathbb{E}(|S^{(n)}_1|)}{\sqrt{b_n}A}+\mathbb{E}\left( \frac{|S_1^{(n)}|}{\sqrt{b_n}A} \frac{|S_1^{(n)}|}{\sqrt{b_n}}\mathbf{1}_{\left\{\frac{S_1^{(n)}}{\sqrt{b_n}}\leq -A \right\}}\right) 
  \leq \frac{C}{A}.
\end{align*}

Therefore, we have
\[
  \limsup_{n \to \infty} \left|  \mathbb{E}\left(L(\frac{-S_1^{(n)}}{\sqrt{b_n}})\mathbf{1}_{\left\{\frac{S_1^{(n)}}{\sqrt{b_n}}\leq \frac{\bar{F}_n(1)}{\sqrt{b_n}}\right\}}\right) - \frac{1}{\sqrt{2\pi}}\int_{-A}^{0}L(-y)e^{-\frac{y^2}{2}} dy\right|\leq \frac{C}{A}.
\]
Thus, letting $A \to \infty$ in \eqref{mv}, by \eqref{mhhh} we obtain that 
\begin{align*}
     &\lim_{n \to \infty} \mathbb{E}\left(L(\frac{-S_1^{(n)}}{\sqrt{b_n}})\mathbf{1}_{\left\{\frac{S_1^{(n)}}{\sqrt{b_n}}\leq \frac{\bar{F}_n(1)}{\sqrt{b_n}}\right\}}\right) = \frac{1}{\sqrt{2\pi}}\int_{-\infty}^{0}L(-y)e^{-\frac{y^2}{2}} dy\\&=\mathbb{E}\left(L(\frac{S_1^{(n)}}{\sqrt{b_n}})\mathbf{1}_{\left\{\frac{S_1^{(n)}}{\sqrt{b_n}}\geq 0\right\}}\right)=L(0)=1.
\end{align*}
	As a consequence we obtain
	\begin{align*}
&\mathbb{E}\left(\prod_{|w|=1}\mathbb{E}\left(1+L(-\frac{S_w-b_nv}{\sqrt{b_n}})e^{\theta^{*}(S_w-b_nv)}\mathbf{1}_{\left\{S_w-b_nv\leq\frac{\bar{F}_n(1)}{b_n}\right\}}\frac{1}{\sqrt{2\pi\sigma^2}} \int e^{-\theta^{*}y}(e^{-\phi(y)}-1)dy\right) \right)\\&\sim_{n\to \infty}\mathbb{E}\left(\left(1-\frac{1}{\sqrt{2\pi\sigma^2}}m^{-b_n}\int e^{-\theta^{*}y}(1-e^{-\phi(y)})dy\right)^{\#\left\{u,|u|=1\right\}} \right)\\&=\mathbb{E}\left(\left(1-\frac{1}{\sqrt{2\pi\sigma^2}}m^{-b_n}\int e^{-\theta^{*}y}(1-e^{-\phi(y)})dy\right)^{Z_{b_n}} \right).
\end{align*}
	Finally,  applying dominated convergence theorem and by assumption \eqref{F} we deduce that \eqref{T1} holds  for all function $\phi\in{\mathcal{C}^{l,+}_{b}}$, which concludes the proof using Remark \ref{remark}.
	\end{proof}

	\paragraph{Acknowledgements.}
 I want to thank  Bastien Mallein for introducing me this subject and for the very helpful discussions.
 
\noindent\begin{minipage}{0.1\textwidth}
\end{minipage}\quad
\begin{minipage}{0.85\textwidth}
This program has received funding from the European Union's Horizon 2020 research and innovation program under the Marie Skłodowska-Curie grant agreement No 754362.
\end{minipage}

\bibliographystyle{plain}
\bibliography{R}

\begin{thebibliography}{10}

\bibitem{zbMATH05587823}
L.~{Addario-Berry} and B.~{Reed}.
\newblock {Minima in branching random walks.}
\newblock {\em {Ann. Probab.}}, 37(3):1044--1079, 2009.

\bibitem{zbMATH06216112}
E.~{A\"{\i}d\'ekon}.
\newblock {Convergence in law of the minimum of a branching random walk.}
\newblock {\em {Ann. Probab.}}, 41(3A):1362--1426, 2013.

\bibitem{zbMATH05935920}
E.~{A\"{\i}d\'ekon} and B.~{Jaffuel}.
\newblock {Survival of branching random walks with absorption.}
\newblock {\em {Stochastic Processes Appl.}}, 121(9):1901--1937, 2011.

\bibitem{zbMATH06083948}
L-P. {Arguin}, Anton {Bovier}, and Nicola {Kistler}.
\newblock {Poissonian statistics in the extremal process of branching Brownian
  motion.}
\newblock {\em {Ann. Appl. Probab.}}, 22(4):1693--1711, 2012.

\bibitem{zbMATH06247828}
L.P. {Arguin}, A.~{Bovier}, and N.~{Kistler}.
\newblock {The extremal process of branching Brownian motion.}
\newblock {\em {Probab. Theory Relat. Fields}}, 157(3-4):535--574, 2013.

\bibitem{berestycki2018extremes}
J.~Berestycki, É. Brunet, A.~Cortines, and B.~Mallein.
\newblock Extremes of branching ornstein-uhlenbeck processes, 2018.

\bibitem{10.2307/1426138}
J.~D. Biggins.
\newblock {The First- and Last-Birth Problems for a Multitype Age Dependent
  Branching Process}.
\newblock {\em Advances in Applied Probability}, 8(3):446--459, 1976.

\bibitem{borovkov2017generalization}
A.A. Borovkov.
\newblock Generalization and refinement of the integro-local stone theorem for
  sums of random vectors.
\newblock {\em Theory of Probability \& Its Applications}, 61(4):590--612,
  2017.

\bibitem{zbMATH03562205}
M.~D. {Bramson}.
\newblock {Maximal displacement of branching Brownian motion.}
\newblock {\em {Commun. Pure Appl. Math.}}, 31:531--581, 1978.

\bibitem{caravenna2005local}
F.~Caravenna.
\newblock A local limit theorem for random walks conditioned to stay positive,
  2005.

\bibitem{zbMATH06494911}
B.~{Derrida}.
\newblock {Random-energy model: an exactly solvable model of disordered
  systems.}
\newblock {\em {Phys. Rev. B (3)}}, 24(5):2613--2626, 1981.

\bibitem{derrida1985generalization}
B.~Derrida.
\newblock A generalization of the random energy model which includes
  correlations between energies.
\newblock {\em Journal de Physique Lettres}, 46(9):401--407, 1985.

\bibitem{feller1971introduction}
W.~Feller.
\newblock {\em An Introduction to Probability theory and its application Vol
  II}.
\newblock J. Wiley and Sons, 1971.

\bibitem{zbMATH03474678}
J.~M. {Hammersley}.
\newblock {Postulates for subadditive processes.}
\newblock {\em {Ann. Probab.}}, 2:652--680, 1974.

\bibitem{zbMATH05558299}
Y.~{Hu} and Z.{Shi}.
\newblock {Minimal position and critical martingale convergence in branching
  random walks, and directed polymers on disordered trees.}
\newblock {\em {Ann. Probab.}}, 37(2):742--789, 2009.

\bibitem{zbMATH03544943}
J-P. {Kahane} and Jacques. {Peyriere}.
\newblock {Sur certaines martingales de Benoit Mandelbrot.}
\newblock {\em {Adv. Math.}}, 22:131--145, 1976.

\bibitem{zbMATH03322553}
H.~{Kesten} and B.~P. {Stigum}.
\newblock {A limit theorem for multidimensional Galton-Watson processes.}
\newblock {\em {Ann. Math. Stat.}}, 37:1211--1223, 1966.

\bibitem{zbMATH03509558}
J.~F.~C. {Kingman}.
\newblock {The first birth problem for an age-dependent branching process.}
\newblock {\em {Ann. Probab.}}, 3:790--801, 1975.

\bibitem{zbMATH03481611}
J.~{Koml\'os}, P.~{Major}, and G.~{Tusn\'ady}.
\newblock {An approximation of partial sums of independent RV's and the sample
  DF. II.}
\newblock {\em {Z. Wahrscheinlichkeitstheor. Verw. Geb.}}, 34:33--58, 1976.

\bibitem{zbMATH06705452}
T.~{Madaule}.
\newblock {Convergence in law for the branching random walk seen from its tip.}
\newblock {\em {J. Theor. Probab.}}, 30(1):27--63, 2017.

\bibitem{zbMATH06471546}
B.~{Mallein}.
\newblock {Maximal displacement in a branching random walk through interfaces.}
\newblock {\em {Electron. J. Probab.}}, 20:40, 2015.
\newblock Id/No 68.

\bibitem{zbMATH06473046}
M.~A. {Schmidt} and N.~{Kistler}.
\newblock {From Derrida's random energy model to branching random walks: from 1
  to 3.}
\newblock {\em {Electron. Commun. Probab.}}, 20:12, 2015.
\newblock Id/No 47.

\bibitem{zbMATH06492274}
Z.~{Shi}.
\newblock {\em {Branching random walks. \'Ecole d'\'Et\'e de Probabilit\'es de
  Saint-Flour XLII -- 2012.}}, volume 2151.
\newblock Cham: Springer, 2015.

\bibitem{spitzer1960tauberian}
F.~Spitzer.
\newblock A tauberian theorem and its probability interpretation.
\newblock {\em Transactions of the American Mathematical Society},
  94(1):150--169, 1960.

\bibitem{zbMATH03374724}
C.~{Stone}.
\newblock {On local and ratio limit theorems.}
\newblock {Proc. 5th Berkeley Sympos. math. Statist. Probab., Univ. Calif.
  1965/1966, 2, Part 2, 217-224 (1967).}, 1967.

\bibitem{zbMATH05728589}
Y.~{Velenik}.
\newblock {Localization and delocalization of random interfaces.}
\newblock {\em {Probab. Surv.}}, 3:112--169, 2006.

\end{thebibliography}
\end{document}